\definecolor{cluster}{HTML}{A81531}
\renewcommand\to{\mathchoice{\longrightarrow}{\rightarrow}{\rightarrow}{\rightarrow}}
\tikzset{curve/.style={settings={#1},to path={(\tikztostart)
    .. controls ($(\tikztostart)!\pv{pos}!(\tikztotarget)!\pv{height}!270:(\tikztotarget)$)
    and ($(\tikztostart)!1-\pv{pos}!(\tikztotarget)!\pv{height}!270:(\tikztotarget)$)
    .. (\tikztotarget)\tikztonodes}},
    settings/.code={\tikzset{quiver/.cd,#1}
        \def\pv##1{\pgfkeysvalueof{/tikz/quiver/##1}}},
    quiver/.cd,pos/.initial=0.35,height/.initial=0}
\tikzset{tail reversed/.code={\pgfsetarrowsstart{tikzcd to}}}
\tikzset{2tail/.code={\pgfsetarrowsstart{Implies[reversed]}}}
\tikzset{2tail reversed/.code={\pgfsetarrowsstart{Implies}}}
\tikzset{no body/.style={/tikz/dash pattern=on 0 off 1mm}}
\newtheoremstyle{custom}
{}
{}
{}
{0pt}
{}
{}
{ }
{\textbf{\thmname{#1} {\thmnumber{{#2}}}}\thmnote{{ {(#3)}}}\textbf{.}}
\newtheoremstyle{custom*}
{}
{}
{\itshape}
{0pt}
{}
{}
{ }
{\textbf{\thmname{#1}\thmnote{{ {#3}}}}\textbf{.}}
\renewcommand{\mathbb}[1]{\mathds{#1}}
\theoremstyle{plain}
\newtheorem{theorem}{Theorem}[section]
\newtheorem{proposition}[theorem]{Proposition}
\newtheorem{corollary}[theorem]{Corollary}
\newtheorem{fact}[theorem]{Fact}
\newtheorem{lemma}[theorem]{Lemma}
\theoremstyle{definition}
\newtheorem{definition}[theorem]{Definition}
\newtheorem{remark}[theorem]{Remark}
\newtheorem{question}[theorem]{Question}
\newtheorem{example}[theorem]{Example}
\theoremstyle{custom*}
\newtheorem*{theorem*}{Main theorem}
\DeclareMathOperator{\chara}{char}
\newcommand{\emptydefinition}{\emptyset}
\renewcommand{\emptyset}{\varnothing}
\renewcommand{\L}{\mathcal{L}}
\renewcommand{\vec}{\overline}
\newcommand{\Lring}{\L_{\mathrm{ring}}}
\newcommand{\Loag}{\L_{\mathrm{oag}}}
\newcommand{\Lval}{\L_{\mathrm{val}}}
\newcommand{\Lthree}{\L_{{K,\Gamma,k}}}
\newcommand{\N}{\mathbb{N}}
\newcommand{\Gal}[2]{\operatorname{Gal}(#1 | #2)}
\newcommand{\infvK}[1]{{#1}_{\infty}}
\newcommand{\induced}{\overline}
\newcommand{\barv}{\induced{v}}
\newcommand{\condsep}{\(Kv_K\) is separably closed}
\newcommand{\condthens}{\(Kv_K\) is not t-henselian}
\newcommand{\conddivelem}{there is \(L \succeq Kv_K\) such that \(v_LL\) is not divisible}
\newcommand{\conddiv}{\(v_KK\) is not divisible}
\newcommand{\conddef}{\((K,v_K)\) is not defectless}
\newcommand{\conddefelem}{there is \(L \succeq Kv_K\) such that \((L,v_L)\) is not defectless}
\renewcommand{\O}{\mathcal{O}}
\newcommand{\m}{\mathfrak{m}}
\DeclareMathOperator{\res}{res}
\title{Definable henselian valuations in positive residue characteristic}
\author{Margarete Ketelsen}
\address{Institut für Mathematische Logik und Grundlagenforschung, Universität Münster}
\email{margarete.ketelsen@uni-muenster.de}
\author{Simone Ramello}
\address{Institut für Mathematische Logik und Grundlagenforschung, Universität Münster}
\email{simone.ramello@uni-muenster.de}
\author{Piotr Szewczyk}
\address{Institut für Algebra, Technische Universität Dresden}
\email{piotr.szewczyk@tu-dresden.de}
\date{\today}
\begin{document}
\maketitle
\begin{abstract}
	We study the question of $\Lring$-definability of non-trivial henselian valuation rings. Building on previous work of Jahnke and Koenigsmann, we provide a characterization of henselian fields that admit a non-trivial definable henselian valuation. In particular, we treat the cases where the canonical henselian valuation has positive residue characteristic, using techniques from the model theory and algebra of tame fields.
\end{abstract}

\section{Introduction}

\par Many successful investigations around the existence and properties of solutions of polynomial equations over a field rely on enriching the field with a valuation. For example, over the field of $p$-adic numbers $\mathbb Q_p$ the valuation ring is an $\Lring$-definable subset, as first observed in the seminal work of Julia Robinson on Hilbert's Tenth Problem \cite{ROBINSON2014299}. Understanding this phenomenon is a classical topic in the model theory of valued fields, and it has striking applications in the investigations around dividing lines for fields, see e.g. Johnson's spectacular classification of dp-finite fields in \cite{johnson2020dpfinite}. For a more thorough survey on definability of valuations we refer the reader to \cite{fehm-jahnke2017survey}.
\par In this manuscript, we will focus on the problem of characterizing which fields admit a non-trivial definable henselian valuation. There are some clear obstructions: for example, if the field is separably closed, then the answer is always \textit{no}. Once this case is excluded, Jahnke and Koenigsmann isolate in \cite{jahnke-koenigsmann2017defining-coarsenings} necessary and sufficient properties of the canonical henselian valuation $v_K$ of a field $K$ (see Subsection \ref{subs:canonical}) for the existence of a definable non-trivial henselian valuation, under the assumption that $\chara{Kv_K} = 0$.

\begin{theorem}[{\cite[Corollary 6.1]{jahnke-koenigsmann2017defining-coarsenings}}]
	\label{th:cor61}
	Let $K$ be a henselian field that is not separably closed, with $\chara{Kv_K} = 0$. Then $K$ admits a definable non-trivial henselian valuation if and only if \textit{at least one} of the following conditions holds:
	\begin{enumerate}
		\item \condsep,
		\item \condthens,
		\item \conddivelem,
		\item \conddiv.
	\end{enumerate}
\end{theorem}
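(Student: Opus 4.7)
The argument splits into sufficiency and necessity. Since the excerpt works in residue characteristic zero, Ax-Kochen-Ershov is available and I would make repeated use of it, together with the existing body of work on definability of the canonical henselian valuation that the statement is a corollary to.

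For sufficiency I would treat each of the four conditions separately. Condition (1), that $Kv_K$ is separably closed, forces $v_K$ itself to be non-trivial (because $K$ is not separably closed by hypothesis), and Koenigsmann's theorem on defining henselian valuations with separably closed residue field then supplies an $\Lring$-definition of $v_K$. Condition (2), the failure of t-henselianity in $Kv_K$, can be captured by a Prestel-Ziegler style formula internal to the residue field and pulled back to define a coarsening of $v_K$ on $K$. Conditions (3) and (4) are handled via the observation that a non-divisible element in a value group (whether of $K$ itself or of an elementary extension of $Kv_K$) cuts out a non-trivial proper convex subgroup; the associated coarsening of $v_K$ then admits a uniform $\Lring$-definition via formulas of the shape $\exists y\, y^n = 1 + x$ selecting a maximal ideal of the coarsening.

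For necessity I would contrapose. Assume that $Kv_K$ is not separably closed, is t-henselian, the value group of every elementary extension of $Kv_K$ is divisible, and $v_KK$ itself is divisible. Under these hypotheses AKE in residue characteristic zero tells us that $K$ is elementarily equivalent to a suitably \emph{tame} Hahn-style field, and any non-trivial henselian valuation on $K$ must be comparable to $v_K$, hence a coarsening of it. Working inside a sufficiently saturated elementary extension, I would then exhibit $\Lring$-automorphisms that move the valuation ring of any prescribed non-trivial coarsening of $v_K$, contradicting the assumption of $\Lring$-definability.

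The main obstacle, I expect, is the necessity direction, and in particular the subtle interplay between $K$ and its elementary extensions appearing in conditions (3) and (4): the statement carefully separates \textit{\(v_KK\) not divisible} from \textit{some \(v_LL\) not divisible for \(L \succeq Kv_K\)}, and under the failure of both one must ensure that divisibility of value groups really persists throughout all elementary extensions. Building the symmetry argument then rests on turning the full \emph{tameness} of the residue field (encoded by separably-closed-failure together with t-henselianity and divisibility of every elementarily extended value group) into enough automorphisms of a saturated model to exclude any $\Lring$-formula isolating a proper coarsening of $v_K$.
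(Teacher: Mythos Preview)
Your overall architecture---sufficiency by cases, necessity by contraposition in a saturated extension---matches the paper's, but there are two substantive divergences, one of which is a genuine gap.

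\textbf{Condition (3), sufficiency.} You say that a non-divisible element in $v_LL$, for $L \succeq Kv_K$, ``cuts out a non-trivial proper convex subgroup'' whose associated coarsening of $v_K$ is then $\Lring$-definable. But that convex subgroup lives in $v_LL$, a group attached to an elementary extension of the \emph{residue field} $Kv_K$, not to $K$; there is no direct way to read off from it a coarsening of $v_K$ on $K$, and the formulas $\exists y\,(y^n = 1+x)$ you propose are formulas about $K$, where by $\neg 4$ the value group may well be divisible. The paper's route (Proposition~\ref{definability}) is more indirect: one uses Keisler--Shelah to identify $L^{\mathcal U}$ with $(Kv_K)^{\mathcal U}$ for a suitable ultrafilter, so that on $K^{\mathcal U}$ the canonical valuation decomposes as $v_{K^{\mathcal U}} = v_{L^{\mathcal U}} \circ v_K^{\mathcal U}$. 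This composed valuation has non-$q$-divisible value group (inherited from $v_{L^{\mathcal U}}$) but is \emph{not $q$-antiregular}, since the quotient $v_K^{\mathcal U}K^{\mathcal U}$ is divisible by $\neg 4$. Theorem~\ref{jk} then produces an $\emptydefinition$-definable henselian valuation on $K^{\mathcal U}$, which descends to $K$ by elementarity. Your sketch skips this transfer entirely.

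\textbf{Necessity.} You propose to exhibit $\Lring$-automorphisms of a saturated model moving any prescribed coarsening of $v_K$. The paper (following \cite{jahnke-koenigsmann2017defining-coarsenings}; see Theorem~\ref{thm:meta} and Lemma~\ref{divisibility-tame}) instead argues via stable embeddedness: in a saturated $(M,v) \succeq (K,v_K)$ one refines $v$ to $v_M$ using the henselianity of $Mv$ supplied by $\neg 2$, checks that $v_MM$ is divisible using $\neg 3$ and $\neg 4$, and then notes that any $\Lring$-definable proper coarsening of $v_M$ would yield an $\Loag$-definable proper non-trivial convex subgroup of a divisible ordered abelian group---impossible. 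Your automorphism route could in principle be completed, but it would amount to re-deriving stable embeddedness by back-and-forth; as written it is a promissory note rather than an argument, and you have not explained why $\neg 3$ (divisibility for \emph{every} $L \succeq Kv_K$) is the hypothesis that makes the automorphisms exist.
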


This characterization (and its extension, which we will state in a moment) falls into the general philosophy of Ax-Kochen/Ershov principles. Typically, (model-theoretic) questions about a valued field $(K,v)$ are answered using its residue field $Kv$ and its value group $vK$. In this case, since the question is about a field $K$ with no specified valuation, one ought to isolate a canonical one among all its henselian valuations; this is usually denoted by $v_K$. Answers to model-theoretic questions about $K$, then, should be given in terms of properties of $Kv_K$ and $v_KK$.

\subsection{Our result} We extend the result of Jahnke and Koenigsmann by removing the assumption on the residue characteristic.

\begin{theorem*}
    \label{main-theorem}
    Let \(K\) be perfect, not separably closed, and henselian. If ${\chara{K} = 0}$ and ${\chara{Kv_K} = p > 0}$, assume that $\O_{v_K}/p$ is semi-perfect. 
    Then \(K\) admits a definable non-trivial henselian valuation if and only if \textit{at least one} of the following conditions hold:
    \begin{enumerate}
    	\item \condsep,
    	\item \condthens,
    	\item \conddivelem,
    	\item \conddiv,
    	\item \conddef,
    	\item \conddefelem.
    \end{enumerate}
\end{theorem*}

\begin{remark}
    Note that in conditions 3 and 6, one automatically has that $v_L$ is non-trivial (since the trivial valuation has divisible value group, and is defectless). Moreover, conditions 5 and 6 are trivial when $\chara{Kv_K} = 0$ (for condition 6, see Corollary \ref{six-trivial}), and thus in that case the theorem reduces to Theorem \ref{th:cor61}.
\end{remark}
	The paper is structured as follows:
\begin{itemize}
	\item Section \ref{section:preliminaries} sets the stage, by fixing the notation and a few elementary facts.
	\item Section \ref{section:divisibility} explores the two roles played by divisibility of the value group: on the one hand, in Proposition \ref{definability}, failure of divisibility in an elementary extension of $Kv_K$ is exploited to ($\emptydefinition$-)define a valuation; on the other, in Theorem \ref{thm:meta}, divisibility is the keystone to finding obstructions to the existence of definable valuations.
	\item Section \ref{section:defect} deploys the technology of independent defect to build definable henselian valuations from certain kinds of Galois defect extensions.
	\item Section \ref{section:characterization} puts everything together, providing the full characterization as advertised.
	\item Section \ref{section:examples} provides a few explicit examples of fields that fit into the main result, and discusses a few open questions.
\end{itemize}

\section{Preliminaries}
\label{section:preliminaries}
\subsection{Notation}
Given a valued field $(K,v)$, we denote by $\O_v$ or \(\O_{(K,v)}\) its valuation ring with maximal ideal $\mathfrak m_v$, $Kv$ its residue field and $vK$ its value group. 
We will write $\infvK{vK}$ for $vK \cup \{\infty\}$, and denote by $\operatorname{res}_v\colon \O_v \to Kv$ the residue map. Given a field \(K\), we denote its separable closure by \(K^{\mathrm{sep}}\). 
A field will be called \emph{henselian} if it admits a non-trivial henselian valuation. 
A ring of characteristic $p>0$ is called \emph{semi-perfect} if $x \mapsto x^p$ is surjective (but not necessarily injective). 
If $K \subseteq L$ is a Galois extension, we will write $\Gal L K$ for the corresponding Galois group. 
We will mostly work with the two languages $\Lring = \{+,\;\cdot\;,0,1\}$, and $\Lval = \Lring \cup \{\O\}$, where $\O$ is a unary predicate. 
If $M$ is a first-order structure, $I$ is an index set and $\mathcal U$ is an ultrafilter on $I$, then we denote by $M^{\mathcal U}$ the corresponding ultrapower. 
Unless otherwise stated, \emph{definable} will mean definable \emph{with parameters}. 
Otherwise, we say $\emptydefinition$-definable.

\subsection{Coarsenings and compositions}
\label{subsec:coarsening-decomposition}
We briefly recall some facts about coarsenings and refinements of valuations, which are explained in greater depth in \cite[Section~2.3]{engler2005valued}. 
For two valuations $v$ and $w$ on $K$, we say that $v$ is \emph{finer} than $w$ or is a \emph{refinement} of $w$ (that $w$ is \emph{coarser} than $v$ or is a \emph{coarsening} of $v$), if $\O_v \subseteq \O_w$. 
We identify $v$ and $w$ if they have the same valuation ring, and use $v$ and $\O_v$ interchangeably. We say \(v\) and \(w\) are \emph{comparable} if \(v\) is a coarsening of \(w\), or \(w\) is a coarsening of \(v\). Coarsenings of a fixed valuation ring $\O_v$ are linearly ordered by inclusion and in one-to-one correspondence with convex subgroups of $vK$. We denote by $v_H$ the coarsening of $v$ associated to a convex subgroup $H$ of $vK$.

Given a valuation \(v\) and a coarsening \(w\) corresponding to the convex subgroup \(H\subseteq vK\), we denote the \emph{induced valuation} \(\barv\colon (Kw)^\times \to H\) on \(Kw\) by \(\barv\), with value group \(H\) and residue field \(Kv\). Given a valuation \(w\) on \(K\) and a valuation \(u\) on its residue field \(Kw\), we define the \emph{composition} \(v=u\circ w\) to be the valuation with valuation ring \(\O_v=\res_w^{-1}(\O_u)\). 
The valuation \(u\) has residue field \((Kw)u=Kv\) and value group \(u(Kw)\), a convex subgroup of \(vK\) with \(vK/u(Kw)\cong wK\).

We can write the valuations as places and obtain the diagram

\begin{center}
	\begin{tikzcd}
		K \arrow[r, "w"] \arrow[rr, "v", bend left] & Kw \arrow[r, "u"] & Kv=(Kw)u.
	\end{tikzcd}
\end{center}

\subsection{The canonical henselian valuation}
\label{subs:canonical}
What follows is classical and can be found in greater detail in \cite[Section~4.4]{engler2005valued}.
For any field $K$, one can arrange henselian valuation rings on \(K\) according to whether their residue field is separably closed or not. Namely, one can define
\[
	H_1(K) = \{\O_v \mid (K,v) \text{ henselian and } (Kv)^{\mathrm{sep}} \neq Kv\}
\]
and
\[
	H_2(K) = \{\O_v \mid (K,v) \text{ henselian and } (Kv)^{\mathrm{sep}}=Kv\}.
\]
As above, we identify $v$ with $\O_v$, so we will often write $v \in H_1(K)$ to mean $\O_v \in H_1(K)$.
\par The set $H_1(K)$ is linearly ordered by coarsening, with $K$ as maximum. If $H_2(K) = \emptyset$, then there is a finest valuation in $H_1(K)$, which we denote by $v_K$. Otherwise, we let $v_K$ be the coarsest valuation in $H_2(K)$. We call $v_K$ the \emph{canonical henselian valuation} on $K$.
\begin{center}

	\tikzset{every picture/.style={line width=0.75pt}} 

	\begin{tikzpicture}[x=0.75pt,y=0.75pt,yscale=-1,xscale=1]
	
	\draw    (279.6,19.4) -- (279.6,117.4) ;
	\draw    (279.6,117.4) -- (208.6,190.4) ;
	\draw    (279.6,117.4) -- (279.6,190.4) ;
	\draw    (279.6,117.4) -- (350.6,190.4) ;
	\draw [shift={(279.6,117.4)}, rotate = 45.8] [color={rgb, 255:red, 0; green, 0; blue, 0 }  ][fill={rgb, 255:red, 0; green, 0; blue, 0 }  ][line width=0.75]      (0, 0) circle [x radius= 3.35, y radius= 3.35]   ;
	\draw    (279.6,142.4) -- (229.87,190.87) ;
	\draw    (279.6,170.4) -- (259.53,190.67) ;
	\draw    (239.9,180.73) -- (249.87,190.87) ;
	\draw    (279.9,179.73) -- (289.87,189.87) ;
	\draw    (319.9,158.73) -- (308.87,190.87) ;
	\draw    (339.9,178.73) -- (336.87,190.87) ;
	\draw    (300.9,139.73) -- (289.87,171.87) ;
	\draw    (219.9,178.73) -- (223.87,189.87) ;
	
	\draw (350,59.4) node [anchor=north west][inner sep=0.75pt]    {$H_{1}( K)$};
	\draw (125,138.4) node [anchor=north west][inner sep=0.75pt]    {$H_{2}( K)$};
	\draw (293,96.4) node [anchor=north west][inner sep=0.75pt]    {$v_{K}$};

	\end{tikzpicture}
\end{center}
Every henselian valuation on \(K\) is comparable to \(v_K\). Moreover, \(v_K\) is non-trivial if and only if \(K\) is henselian and not separably closed. By definition, \({Kv_K\neq(Kv_K)^{\mathrm{sep}}}\) if and only if \(v_K\in H_1(K)\) if and only if \(H_2(K)=\emptyset\), and if these equivalent conditions hold, then all henselian valuation rings on $K$ are linearly ordered and coarser than $\O_{v_K}$.

\begin{remark}
	\label{lem:correspondence-ref-res}
	Let $(K,v)$ be a henselian valued field with $v \in H_1(K)$. Then, \({v_K=v_{Kv}\circ v}\). 
	Indeed, there is a correspondence between the set $H$ of henselian refinements of $v$ on $K$ and henselian valuations on $Kv$. It is given by sending any refinement $u$ of $v$ to the induced valuation $\induced{u}$ on $Kv$, as in the following diagram (where valuations are written as places):
	\[\begin{tikzcd}
		K && Kv && {(Kv)\induced{u} = Ku}.
		\arrow["{\induced{u}}", from=1-3, to=1-5]
		\arrow["v", from=1-1, to=1-3]
		\arrow["u", curve={height=-30pt}, from=1-1, to=1-5]
	\end{tikzcd}\]
	Since the correspondence preserves residue fields, $H \cap H_1(K)$ is mapped to $H_1(Kv)$, and similarly for $H \cap H_2(K)$.
	\begin{center}
		\tikzset{every picture/.style={line width=0.75pt}} 

		\begin{tikzpicture}[x=0.75pt,y=0.75pt,yscale=-1,xscale=1]

		\draw [color={rgb, 255:red, 208; green, 2; blue, 27 }  ,draw opacity=1 ]   (159.42,68.92) -- (159.42,146.25) ;
		\draw [shift={(159.42,146.25)}, rotate = 90] [color={rgb, 255:red, 208; green, 2; blue, 27 }  ,draw opacity=1 ][fill={rgb, 255:red, 208; green, 2; blue, 27 }  ,fill opacity=1 ][line width=0.75]      (0, 0) circle [x radius= 3.35, y radius= 3.35]   ;
		\draw [shift={(159.42,68.92)}, rotate = 270] [color={rgb, 255:red, 208; green, 2; blue, 27 }  ,draw opacity=1 ][line width=0.75]    (0,5.59) -- (0,-5.59)   ;
		\draw [color={rgb, 255:red, 208; green, 2; blue, 27 }  ,draw opacity=1 ]   (159.42,146.25) -- (251.42,251.42) ;
		\draw [color={rgb, 255:red, 208; green, 2; blue, 27 }  ,draw opacity=1 ]   (159.42,146.25) -- (69.42,250.42) ;
		\draw [color={rgb, 255:red, 208; green, 2; blue, 27 }  ,draw opacity=1 ]   (159.42,146.25) -- (108.42,250.83) ;
		\draw [color={rgb, 255:red, 208; green, 2; blue, 27 }  ,draw opacity=1 ]   (159.42,146.25) -- (210.42,250.83) ;
		\draw [color={rgb, 255:red, 208; green, 2; blue, 27 }  ,draw opacity=1 ]   (159.42,146.25) -- (160.42,250.83) ;
		\draw [color={rgb, 255:red, 208; green, 2; blue, 27 }  ,draw opacity=1 ]   (128.42,210.25) -- (139.42,251.83) ;
		\draw [color={rgb, 255:red, 208; green, 2; blue, 27 }  ,draw opacity=1 ]   (190.42,209.25) -- (180.42,251.83) ;
		\draw    (159.42,15.58) -- (159.42,68.92) ;
		\draw [color={rgb, 255:red, 208; green, 2; blue, 27 }  ,draw opacity=1 ]   (438.42,67.92) -- (438.42,145.25) ;
		\draw [shift={(438.42,145.25)}, rotate = 90] [color={rgb, 255:red, 208; green, 2; blue, 27 }  ,draw opacity=1 ][fill={rgb, 255:red, 208; green, 2; blue, 27 }  ,fill opacity=1 ][line width=0.75]      (0, 0) circle [x radius= 3.35, y radius= 3.35]   ;
		\draw [shift={(438.42,67.92)}, rotate = 270] [color={rgb, 255:red, 208; green, 2; blue, 27 }  ,draw opacity=1 ][line width=0.75]    (0,5.59) -- (0,-5.59)   ;
		\draw [color={rgb, 255:red, 208; green, 2; blue, 27 }  ,draw opacity=1 ]   (438.42,145.25) -- (530.42,250.42) ;
		\draw [color={rgb, 255:red, 208; green, 2; blue, 27 }  ,draw opacity=1 ]   (438.42,145.25) -- (348.42,249.42) ;
		\draw [color={rgb, 255:red, 208; green, 2; blue, 27 }  ,draw opacity=1 ]   (438.42,145.25) -- (387.42,249.83) ;
		\draw [color={rgb, 255:red, 208; green, 2; blue, 27 }  ,draw opacity=1 ]   (438.42,145.25) -- (489.42,249.83) ;
		\draw [color={rgb, 255:red, 208; green, 2; blue, 27 }  ,draw opacity=1 ]   (438.42,145.25) -- (439.42,249.83) ;
		\draw [color={rgb, 255:red, 208; green, 2; blue, 27 }  ,draw opacity=1 ]   (407.42,209.25) -- (418.42,250.83) ;
		\draw [color={rgb, 255:red, 208; green, 2; blue, 27 }  ,draw opacity=1 ]   (469.42,208.25) -- (459.42,250.83) ;
		\draw [color={rgb, 255:red, 208; green, 2; blue, 27 }  ,draw opacity=1 ] 
		(212.25,69.58) .. controls (252.89,41.09) and (345.62,37.4) .. (390.08,68.95) ;
		\draw [shift={(391.42,69.92)}, rotate = 216.58] [color={rgb, 255:red, 208; green, 2; blue, 27 }  ,draw opacity=1 ][line width=0.75]    (10.93,-3.29) .. controls (6.95,-1.4) and (3.31,-0.3) .. (0,0) .. controls (3.31,0.3) and (6.95,1.4) .. (10.93,3.29)   ;
		\draw [shift={(210.42,70.92)}, rotate = 323.13] [color={rgb, 255:red, 208; green, 2; blue, 27 }  ,draw opacity=1 ][line width=0.75]    (10.93,-3.29) .. controls (6.95,-1.4) and (3.31,-0.3) .. (0,0) .. controls (3.31,0.3) and (6.95,1.4) .. (10.93,3.29)   ;

		\draw (131,58.4) node [anchor=north west][inner sep=0.75pt]    {$v$};
		\draw (60,59.4) node [anchor=north west][inner sep=0.75pt]    {$H_{1}( K)$};
		\draw (59,165.4) node [anchor=north west][inner sep=0.75pt]    {$H_{2}( K)$};
		\draw (502,61.4) node [anchor=north west][inner sep=0.75pt]  [color={rgb, 255:red, 208; green, 2; blue, 27 }  ,opacity=1 ]  {$H_{1}( Kv)$};
		\draw (502,162.4) node [anchor=north west][inner sep=0.75pt]  [color={rgb, 255:red, 208; green, 2; blue, 27 }  ,opacity=1 ]  {$H_{2}( Kv)$};
		\draw (213,120.4) node [anchor=north west][inner sep=0.75pt]    {$\textcolor[rgb]{0.82,0.01,0.11}{H}$};

		\draw [color={rgb, 255:red, 208; green, 2; blue, 27 }  ,draw opacity=0 ]  (157.42,146.25) -- (161.42,146.25)(159.42,144.25) -- (159.42,148.25) ;
		\draw [color={rgb, 255:red, 208; green, 2; blue, 27 }  ,draw opacity=0 ]  (157.42,146.25) -- (161.42,146.25)(159.42,144.25) -- (159.42,148.25) ;
		\draw [color={rgb, 255:red, 208; green, 2; blue, 27 }  ,draw opacity=0 ]  (157.42,146.25) -- (161.42,146.25)(159.42,144.25) -- (159.42,148.25) ;
		\draw [color={rgb, 255:red, 208; green, 2; blue, 27 }  ,draw opacity=0 ]  (157.42,146.25) -- (161.42,146.25)(159.42,144.25) -- (159.42,148.25) ;
		\draw [color={rgb, 255:red, 208; green, 2; blue, 27 }  ,draw opacity=0 ]  (157.42,146.25) -- (161.42,146.25)(159.42,144.25) -- (159.42,148.25) ;
		\draw [color={rgb, 255:red, 208; green, 2; blue, 27 }  ,draw opacity=0 ]  (436.42,145.25) -- (440.42,145.25)(438.42,143.25) -- (438.42,147.25) ;
		\draw [color={rgb, 255:red, 208; green, 2; blue, 27 }  ,draw opacity=0 ]  (436.42,145.25) -- (440.42,145.25)(438.42,143.25) -- (438.42,147.25) ;
		\draw [color={rgb, 255:red, 208; green, 2; blue, 27 }  ,draw opacity=0 ]  (436.42,145.25) -- (440.42,145.25)(438.42,143.25) -- (438.42,147.25) ;
		\draw [color={rgb, 255:red, 208; green, 2; blue, 27 }  ,draw opacity=0 ]  (436.42,145.25) -- (440.42,145.25)(438.42,143.25) -- (438.42,147.25) ;
		\draw [color={rgb, 255:red, 208; green, 2; blue, 27 }  ,draw opacity=0 ]  (436.42,145.25) -- (440.42,145.25)(438.42,143.25) -- (438.42,147.25) ;
		\end{tikzpicture}

	\end{center}
	The correspondence preserves the coarsening relation. In particular, it follows that $v_K$ is mapped to $v_{Kv}$ and \(v_K=v_{Kv}\circ v\).
\end{remark}
This correspondence immediately allows us to prove that condition 6 of our Main Theorem trivializes in equicharacteristic zero.
\begin{lemma}
	\label{henselian-sep-closed}
	If $Kv_K$ is henselian, then it is separably closed.
\end{lemma}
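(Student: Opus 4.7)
The plan is to argue by cases based on whether the canonical henselian valuation $v_K$ lies in $H_1(K)$ or in $H_2(K)$. If $v_K \in H_2(K)$, then $Kv_K$ is separably closed by the very definition of $H_2(K)$, and there is nothing to prove. So the substantive case is when $v_K \in H_1(K)$, and the goal is to derive a contradiction from the hypothesis that $Kv_K$ is henselian.

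Assume therefore that $v_K \in H_1(K)$. Recall from Subsection \ref{subs:canonical} that in this situation $H_2(K) = \emptyset$, and $v_K$ is the \emph{finest} element of $H_1(K)$. Since $Kv_K$ is henselian, there exists a non-trivial henselian valuation $u$ on $Kv_K$. I would then invoke the correspondence described in Remark \ref{lem:correspondence-ref-res}: the composition $v' := u \circ v_K$ is a henselian valuation on $K$, which is a strict refinement of $v_K$ because $u$ is non-trivial.

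Now $v'$ is henselian, so $v' \in H_1(K) \cup H_2(K)$. The option $v' \in H_2(K)$ is excluded because $H_2(K) = \emptyset$ in this case; hence $v' \in H_1(K)$. But $\O_{v'} \subsetneq \O_{v_K}$ contradicts the minimality of $\O_{v_K}$ among the valuation rings in $H_1(K)$, giving the desired contradiction. Consequently $v_K \in H_2(K)$, i.e.\ $Kv_K$ is separably closed.

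The argument is essentially a bookkeeping exercise once the correspondence between refinements of $v_K$ on $K$ and henselian valuations on $Kv_K$ (Remark \ref{lem:correspondence-ref-res}) is in place; the only point requiring a little care is to make sure that the composition $u \circ v_K$ really is henselian and really produces a \emph{strict} refinement, both of which are immediate from the set-up. I do not anticipate any genuine obstacle.
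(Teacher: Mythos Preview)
Your proof is correct and follows essentially the same approach as the paper. The paper's argument is slightly more compressed: it observes directly that the proper henselian refinement of $v_K$ forces $H_2(K)\neq\emptyset$ (and hence $Kv_K$ is separably closed), whereas you make the case distinction $v_K\in H_1(K)$ versus $v_K\in H_2(K)$ explicit and derive a contradiction in the former case; but the underlying idea---pulling back a non-trivial henselian valuation on $Kv_K$ to a proper henselian refinement of $v_K$ via Remark~\ref{lem:correspondence-ref-res}---is identical.
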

\begin{proof}
	Suppose $Kv_K$ admits a non-trivial henselian valuation $v$. Via \ref{lem:correspondence-ref-res} this gives rise to a proper refinement of \(v_K\). Hence \(H_2(K)\) is non-empty, so \(Kv_K\) is separably closed.
\end{proof}
\begin{corollary}
	\label{six-trivial}
	Let \(L\succeq Kv_K\). Then \((L,v_L)\) cannot have mixed characteristic.
\end{corollary}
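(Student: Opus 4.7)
The plan is to reduce to the only non-trivial scenario, namely $\chara{Kv_K} = 0$, and derive a contradiction via Lemma~\ref{henselian-sep-closed}.

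First, since $L \succeq Kv_K$ forces $\chara{L} = \chara{Kv_K}$, whenever $\chara{Kv_K} = p > 0$ we automatically have $\chara{L} = p$, so $(L,v_L)$ is of equal characteristic $p$ and the conclusion is immediate. I would therefore assume $\chara{Kv_K} = 0$, hence $\chara{L} = 0$, and argue by contradiction: suppose $\chara{Lv_L} = q > 0$. Then $v_L$ must be non-trivial, so $L$ admits a non-trivial henselian valuation, i.e.\ $L$ is henselian in the sense of the paper. Moreover $L$ cannot be separably closed, since the canonical henselian valuation on a separably closed field is trivial (cf.\ Subsection~\ref{subs:canonical}), which would conflict with $v_L$ being non-trivial.

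Next, I would transfer ``henselian and not separably closed'' from $L$ to $Kv_K$ along the elementary equivalence $L \equiv Kv_K$. The classical theorem of Prestel and Ziegler delivers exactly this: the class of non-separably-closed henselian fields coincides with the class of non-separably-closed $t$-henselian fields, and the latter is $\Lring$-elementary. This yields that $Kv_K$ is henselian and not separably closed, directly contradicting Lemma~\ref{henselian-sep-closed}.

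The main subtle point is precisely this transfer step. ``Admits a non-trivial henselian valuation'' is not in general an $\Lring$-elementary property (for instance, separably closed fields are elementarily equivalent to fields not admitting any non-trivial henselian valuation), so one genuinely needs the non-separable-closedness of $L$ to invoke Prestel and Ziegler. Fortunately this hypothesis is exactly what the previous step secures, and the argument then closes.
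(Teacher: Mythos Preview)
Your argument has a genuine gap at the transfer step. You claim that ``the class of non-separably-closed henselian fields coincides with the class of non-separably-closed $t$-henselian fields, and the latter is $\Lring$-elementary.'' This is false: the paper itself constructs, in Fact~\ref{t-hensel-not-hensel} and Proposition~\ref{construction-of-t-defect}, fields that are $t$-henselian, not separably closed, and \emph{not} henselian. What Prestel--Ziegler gives (Fact~\ref{lem:prestel-ziegler}) is only that an $\aleph_1$-saturated $t$-henselian field is henselian; it does not make henselianity an elementary property even among non-separably-closed fields. So from $L$ henselian and $L \equiv Kv_K$ you may conclude that $Kv_K$ is $t$-henselian and not separably closed, but not that $Kv_K$ is henselian, and Lemma~\ref{henselian-sep-closed} does not apply.

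The paper's proof avoids this issue entirely by working directly with the valuation rather than trying to transfer an elementary property. One restricts $v_L$ to the subfield $Kv_K \subseteq L$. Because $L \succeq Kv_K$, the subfield $Kv_K$ is relatively algebraically closed in $L$, and the restriction of a henselian valuation to a relatively algebraically closed subfield is again henselian. Since $(L,v_L)$ has mixed characteristic, so does the restriction, which is therefore non-trivial. Now Lemma~\ref{henselian-sep-closed} applies to $Kv_K$ directly, forcing $Kv_K$ (and hence $L$) to be separably closed, contradicting non-triviality of $v_L$. You could repair your argument along these lines.
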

\begin{proof}
	Suppose there is \(L\succeq Kv_K\) such that \((L,v_L)\) has mixed characteristic. 
    Restricting \(v_L\) down to \(Kv_K\) yields a henselian valuation (because \(Kv_K\) is relatively algebraically closed in \(L\)) of mixed characteristic on \(Kv_K\), which in particular must be non-trivial. By Lemma \ref{henselian-sep-closed}, $Kv_K$ must be separably closed.    
	It follows that $L$ is also separably closed, but this implies that $v_L$ is trivial.
\end{proof}

\subsection{Defect and tame fields}\label{sub:defect}
Let $(K,v) \subseteq (L,w)$ be a finite extension of a henselian valued field (note that $w$ is uniquely determined). Let $p$ be the \emph{characteristic exponent} of $Kv$, namely $p = 1$ if $\chara(Kv) = 0$, and $p = \chara(Kv)$ otherwise. Then one has, by \cite[Theorem~3.3.3]{engler2005valued},
	\begin{equation}
	\label{defect-eqn}
		[L:K] = d\cdot(wL:vK)\cdot[Lw:Kv], \tag{\(\star\)}
	\end{equation}
for some $d = p^\nu \in \N$. We say that the extension is \emph{defectless} if $d=1$ (or equivalently $\nu = 0$), otherwise we say that the extension has \emph{defect} (or that it is a \emph{defect extension}). The valued field $(K,v)$ is called \emph{defectless} if all of its finite extensions are defectless. We refer to \cite{kuhlmann2016algebra} for the details on the various equivalent definitions of ``tame''.

\begin{definition}
    Let $(K,v)$ be a henselian valued field and let $p$ be the characteristic exponent of the residue field. Then $(K,v)$ is called \emph{tame} if it is algebraically maximal, $vK$ is $p$-divisible, and $Kv$ is perfect. Equivalently, $(K,v)$ is tame if it is defectless, $vK$ is $p$-divisible, and $Kv$ is perfect.
\end{definition}

In the case of valued fields where $p = 1$ (i.e., valued fields of equicharacteristic zero), tame is equivalent to henselian.

\begin{remark}[{see \cite[Section 7]{kuhlmann2016algebra}}]
    The class of tame valued fields is an elementary class in the language $\Lval$.
\end{remark}

\subsection{t-henselianity and saturation}
A field $K$ is said to be \emph{t-henselian} if it is elementarily equivalent, in $\Lring$, to a field $L$ admitting a non-trivial henselian valuation. However, a field $K$ can be t-henselian without being henselian. The next fact is well-known, and its proof follows from \cite[Lemma~3.3]{prestel-ziegler1978model} and \cite[Page 203]{prestel1991algebraic}.
\begin{fact}
	\label{lem:prestel-ziegler}
	Let $K$ be t-henselian and $\aleph_1$-saturated. Then $K$ admits a non-trivial henselian valuation. In particular, if $K$ is t-henselian and $K \preceq L$ is an $\aleph_1$-saturated elementary extension, then $L$ admits a non-trivial henselian valuation.
\end{fact}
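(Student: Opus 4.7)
The plan is to invoke the Prestel--Ziegler axiomatization of t-henselianity together with an $\aleph_1$-saturation argument that upgrades countably many approximate first-order witnesses into an actual non-trivial henselian valuation. From \cite{prestel-ziegler1978model} I would use that t-henselianity is axiomatized in $\Lring$ by a recursive scheme $\{\phi_n\}_{n \in \N}$; each $\phi_n$ asserts the existence of a tuple of parameters $\bar a$ which, via a fixed formula $\chi_n(x,\bar a)$, defines a set $U_n(\bar a) \subseteq K$ approximating a basic neighborhood of $0$ in a non-trivial henselian V-topology up to accuracy $n$ (and satisfying Hensel's lemma for polynomials of degree $\leq n$ in an appropriate approximate sense). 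The scheme can be arranged so that level-$(n+1)$ witnesses definably restrict to level-$n$ witnesses, with $U_{n+1} \subseteq U_n$.

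Since $K$ is t-henselian, each $\phi_n$ holds in $K$, and together with the compatibility conditions they induce a countable partial type $\Pi(\bar x_0, \bar x_1, \ldots)$ over $\emptyset$ describing a coherent chain of such approximations. Finite satisfiability of $\Pi$ in $K$ is immediate: a finite fragment only mentions $\bar x_0, \ldots, \bar x_N$, and witnesses for $\phi_N$ together with their canonical restrictions supply what is needed. By $\aleph_1$-saturation of $K$ applied to the countable type $\Pi$, there is a realization $(\bar c_n)_{n\in\N}\subseteq K$, producing a nested family $U_0(\bar c_0) \supseteq U_1(\bar c_1) \supseteq \cdots$ of subsets of $K$.

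Next, I would argue that this nested family forms a basis of neighborhoods of $0$ for an actual non-trivial V-topology $\tau$ on $K$, since the V-topology axioms are encoded in the $\phi_n$ and hence inherited in the limit. By the classical Dürbaum--Kowalsky theorem (see \cite[p.~203]{prestel1991algebraic}), every non-trivial V-topology on a field is induced either by a non-trivial valuation or by an archimedean absolute value; the approximate henselianity built into the $\phi_n$ excludes the archimedean case, so $\tau$ is the topology of a non-trivial valuation $v$ on $K$. Passing the approximate Hensel property to the limit then makes $(K,v)$ henselian.

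The ``in particular'' clause is immediate from the definition: t-henselianity is defined via $\Lring$-elementary equivalence, so any $L \succeq K$ is again t-henselian, and applying the first part to an $\aleph_1$-saturated such $L$ finishes the proof. The main obstacle I expect is the careful bookkeeping needed to present the Prestel--Ziegler scheme as a single countable type whose realization genuinely assembles into a V-topology, and to check that approximate henselianity transfers to genuine henselianity of the limit valuation; no substantively new ingredient seems required beyond their framework.
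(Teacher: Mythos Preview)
Your outline follows the standard Prestel--Ziegler strategy that the paper invokes (the paper itself gives no proof beyond the two citations), and the shape of the argument is right: axiomatise t-henselianity by a scheme, use $\aleph_1$-saturation to realise a coherent chain of approximate neighbourhoods, and extract a henselian valuation. One step, however, does not go through as written. The claim that ``the approximate henselianity built into the $\phi_n$ excludes the archimedean case'' is false: the ordinary archimedean topologies on $\mathbb{R}$ and $\mathbb{C}$ \emph{do} satisfy the topological Hensel scheme (this is essentially the implicit function theorem), so after Kowalsky--D\"urbaum you cannot dismiss the absolute-value branch on those grounds alone.

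There are two standard repairs. The cleanest --- and this is what \cite[p.~203]{prestel1991algebraic} actually supplies --- is a \emph{second} use of $\aleph_1$-saturation: once the chain $(U_n)_n$ has been realised, the parameters $\{\bar c_n : n\in\omega\}$ form a countable set, and the type $\{y\neq 0\}\cup\{\chi_n(y,\bar c_n):n\in\omega\}$ is finitely satisfiable, so some nonzero $a$ lies in $I\coloneqq\bigcap_n U_n$. The closure properties encoded in the $\phi_n$ then make $I$ the maximal ideal of a non-trivial valuation ring on $K$, whose henselianity follows from the degree-by-degree Hensel conditions; the detour through Kowalsky--D\"urbaum becomes unnecessary. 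Alternatively, if you insist on keeping that route, the archimedean branch forces $K$ to be relatively algebraically closed in its completion $\mathbb{R}$ or $\mathbb{C}$, hence real closed or algebraically closed; an $\aleph_1$-saturated such field carries non-trivial (automatically henselian) valuations, so that branch also closes, just with extra case analysis.
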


\subsection{Definability in Jahnke-Koenigsmann} For use in later sections, we also summarize two of the main theorems of \cite{jahnke-koenigsmann2017defining-coarsenings}, which will prove to be fundamental tools in our work. In particular, we highlight how the second part of the upcoming statement allows us to often assume that the value group we are working with is divisible.

\begin{definition}
	Let $p$ be a prime. An ordered abelian group $\Gamma$ is \emph{$p$-antiregular} if
	\begin{enumerate}
		\item no non-trivial quotient of $\Gamma$ is $p$-divisible,
		\item $\Gamma$ has no rank $1$ quotient.
	\end{enumerate} 
\end{definition}

\begin{theorem}[{\cite[Theorems A and B]{jahnke-koenigsmann2017defining-coarsenings}}]
	\label{jk}
	Let $K$ be a henselian field which is not separably closed. Assume that $K$ satisfies \textit{at least one} of the following:
	\begin{enumerate}
		\item \condsep,
		\item \condthens,
		\item there is a prime $p$ such that $v_KK$ is not $p$-divisible and not $p$-antiregular,
	\end{enumerate}
	then $K$ admits an $\emptydefinition$-definable non-trivial henselian valuation. In general, if \conddiv, then $K$ admits a definable (possibly with parameters) non-trivial henselian valuation.
\end{theorem}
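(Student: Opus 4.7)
The plan is to treat the two assertions separately. Under any of (1), (2), (3), the goal is to produce an $\emptydefinition$-definable non-trivial henselian valuation on $K$; under the weaker hypothesis that $v_K K$ is not divisible, I would allow parameters. The common technical device is that for a prime $q$ distinct from $\chara K$ the subgroup $K^{\times q}$ of $q$-th powers is $\emptydefinition$-definable in $\Lring$. This lets one encode positions in $v_K K$ modulo $q\cdot v_K K$ by $\Lring$-formulas, and thereby promote canonical convex subgroups of $v_K K$ into $\emptydefinition$-definable coarsenings of $v_K$.

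For case (1), $v_K$ lies in $H_2(K)$, and since $K$ is not separably closed some prime $q$ admits a Galois extension of $K$ of degree $q$. The key step is to verify that $v_K$ coincides with the canonical $q$-henselian valuation on $K$, which is $\emptydefinition$-definable by Koenigsmann-style formulas built from $q$-th powers. For case (2), if $Kv_K$ is not t-henselian, the Prestel-Ziegler characterization yields an $\Lring$-formula $\psi$ whose interpretation in $Kv_K$ rules out every non-trivial henselian $V$-topology; lifting $\psi$ through $v_K$ then defines on $K$ itself a non-trivial henselian coarsening of $v_K$. For case (3), failure of $p$-antiregularity means that either some non-trivial quotient of $v_K K$ is $p$-divisible or $v_K K$ admits a rank-one quotient, and combined with the failure of $p$-divisibility this gives a canonical proper convex subgroup $H \leq v_K K$. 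The coarsening $(v_K)_H$ is then pinned down by $p$-th powers in $K$ and becomes $\emptydefinition$-definable.

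For the final assertion, if $v_K K$ is not divisible, pick a prime $p$ and an element $t \in K$ with $v_K(t) \notin p\cdot v_K K$. Using $t$ as a parameter, the convex hull $H(t)$ of $\mathbb{Z}\,v_K(t)$ inside $v_K K$ is cut out by a $p$-th-power condition referring to $t$, so that the associated coarsening of $v_K$ is parameter-definable and non-trivial.

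I expect the main obstacle to be case (3): converting the purely group-theoretic non-degeneracy of $v_K K$ (simultaneous failure of $p$-divisibility and of $p$-antiregularity) into a concrete $\Lring$-formula on $K$, and correctly identifying which convex subgroup of $v_K K$ the formula in fact cuts out, is the delicate step where the bulk of the proof's technical work sits. The first two cases should reduce fairly quickly to standard results (Koenigsmann's theorem on $q$-henselian valuations and the Prestel-Ziegler criterion for t-henselianity), while the last statement is a direct coarsening argument once the right parameter has been chosen.
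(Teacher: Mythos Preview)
The paper does not prove this theorem. Theorem~\ref{jk} is stated as a summary of Theorems~A and~B of Jahnke--Koenigsmann \cite{jahnke-koenigsmann2017defining-coarsenings}, and is used throughout the present paper as a black box; no argument for it appears here. So there is no ``paper's own proof'' to compare your proposal against.

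That said, as a sketch of what the cited paper does, your outline is in the right neighbourhood but imprecise in places. For case~(1) you claim $v_K$ equals the canonical $q$-henselian valuation; in fact what is $\emptydefinition$-definable is only some non-trivial coarsening of $v_K$ (this is the content of \cite[Theorem~3.10]{jahnke-koenigsmann2015definable}, cited here as well), and the argument goes through the $p$-henselian machinery rather than directly identifying $v_K$. For case~(2), ``lifting $\psi$ through $v_K$'' is too vague to be a plan: the actual mechanism in \cite{jahnke-koenigsmann2017defining-coarsenings} is to use the failure of t-henselianity of $Kv_K$ to pin down $v_K$ itself among all henselian valuations via an $\Lring$-sentence about the residue field, and then invoke Beth-style uniqueness. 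For the final (with-parameters) assertion, defining the convex hull of $\mathbb{Z}\,v_K(t)$ via $p$-th powers does not obviously work: $p$-th-power conditions detect cosets modulo $p\cdot v_KK$, not archimedean classes, so you have not explained how the convex hull becomes definable. The actual argument in \cite{jahnke-koenigsmann2017defining-coarsenings} proceeds by a case analysis on the structure of $v_KK$ (regular rank-one quotients versus $p$-divisible quotients) and defines different coarsenings in each case.
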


\section{Divisibility}
\label{section:divisibility}

\subsection{Divisibility as an obstruction} Using classical results on elimination of quantifiers for henselian valued fields of residue characteristic zero, we know that the induced structure of the value group is that of an ordered abelian group. The proof of Theorem \ref{th:cor61} crucially relies on this to obstruct the existence of definable valuations. We use the work of Jahnke and Simon in \cite{jahnke-simon2020nip} to generalize this argument.

\begin{definition}
	Let $\Lthree$ be a three-sorted language with sorts $K$ (with language $\Lring$), $\Gamma$ (with language $\Loag \cup \{\infty\}$) and $k$ (with language $\Lring$), and functions $v\colon K \to \Gamma$ and $\operatorname{res}\colon K \to k$. Any valued field $(K,v)$ can be interpreted as an $\Lthree$-structure in the natural way.
\end{definition}

\begin{fact}[{\cite[Lemma~3.1]{jahnke-simon2020nip}}]
    \label{lem:tame-se}
    Let $(K,v)$ be a tame field of positive residue characteristic. Then the value group $vK$ is purely stably embedded as an ordered abelian group, i.e. for every $D \subseteq vK^n$ which is $\Lthree$-definable (possibly with parameters), there is $\psi \in \Loag(vK)$ which defines $D$.
\end{fact}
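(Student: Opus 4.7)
The plan is to reduce the statement to an Ax-Kochen/Ershov-style relative completeness theorem for tame valued fields, due to Kuhlmann \cite{kuhlmann2016algebra}: if $(K,v) \subseteq (L_i, w_i)$ for $i=1,2$ are tame extensions with $w_1L_1 \equiv_{vK} w_2L_2$ in $\Loag$ and $L_1w_1 \equiv_{Kv} L_2w_2$ in $\Lring$, then $(L_1,w_1) \equiv_{(K,v)} (L_2,w_2)$ in $\Lval$, uniformly on tuples via a back-and-forth controlled by value group and residue field data.

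I would then recast pure stable embeddedness in the usual automorphism-theoretic form. It suffices to show, passing to a sufficiently saturated elementary extension $(K^*, v^*) \succeq (K,v)$ (which is again tame of positive residue characteristic, tameness being an elementary property), that any two tuples $\bar\gamma_1, \bar\gamma_2 \in (v^*K^*)^n$ with the same $\Loag(vK)$-type have the same $\Lthree(K)$-type. If this were to fail, some $\Lthree$-formula $\varphi(\bar x; \bar c)$ with $\bar c$ from $K$ would separate them, and a standard compactness argument allows us to assume we are working inside one large saturated model.

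The heart of the argument is then to lift an $\Loag(vK)$-automorphism of $v^*K^*$ sending $\bar\gamma_1 \mapsto \bar\gamma_2$ (which exists by $\Loag$-saturation over $vK$, together with the assumption on types) to an $\Lval(K)$-automorphism $\tilde\sigma$ of $(K^*, v^*)$. This is the direct content of the back-and-forth underlying Kuhlmann's AKE theorem: starting from the partial isomorphism $\id_K$, one extends by specifying the assignment $\bar\gamma_1 \mapsto \bar\gamma_2$ on the value group sort and the identity on the residue field sort, and the tameness hypothesis (defectlessness, $p$-divisible value group, perfect residue field) guarantees that this partial data extends to a full $\Lval$-isomorphism. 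The resulting $\tilde\sigma$ fixes $\bar c$ and swaps $\bar\gamma_1,\bar\gamma_2$, hence witnesses $\varphi(\bar\gamma_1;\bar c) \leftrightarrow \varphi(\bar\gamma_2;\bar c)$, contradicting the choice of tuples.

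The main obstacle is the lifting step. In equicharacteristic zero this is essentially classical, since henselianity alone suffices to run an AKE-style back-and-forth; in positive residue characteristic, however, the presence of defect in a general henselian extension would block the argument. This is the precise reason the tameness hypothesis is needed, and also why one obtains \emph{pure} stable embeddedness (i.e.\ the defining formula $\psi$ can be taken in $\Loag(vK)$) rather than merely stable embeddedness with some extra induced structure.
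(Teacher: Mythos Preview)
The paper does not prove this statement: it is recorded as a Fact and attributed to \cite[Lemma~3.1]{jahnke-simon2020nip}, so there is no in-paper argument to compare against. Your sketch is a correct outline of the proof in that reference, which indeed proceeds by reducing pure stable embeddedness to an automorphism-lifting statement in a saturated model and then invoking Kuhlmann's relative AKE/back-and-forth for tame fields \cite{kuhlmann2016algebra}.

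One cosmetic point: your bookkeeping with the two models $(K,v)$ and $(K^*,v^*)$ is slightly awkward. Since the conclusion (every externally definable subset of $vK^n$ is $\Loag(vK)$-definable) is a property of the complete theory, it is cleaner to assume from the outset that $(K,v)$ is itself $\kappa$-saturated and strongly $\kappa$-homogeneous, and then show that every $\Loag$-automorphism of $vK$ lifts to an $\Lval$-automorphism of $(K,v)$; this is exactly how \cite{jahnke-simon2020nip} phrase it, and it avoids the mild confusion in your write-up about whether parameters live in $K$ or in $K^*$.
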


The following lemma constitutes the core of the arguments of Jahnke and Koenigsmann, and will be used repeatedly throughout our proofs as well. It relies on a classical fact about divisible ordered abelian groups, which follows from quantifier elimination.
\begin{fact}
    Let $\Gamma$ be a divisible ordered abelian group. Then $\Gamma$ has no proper, $\Loag$-definable (even with parameters) non-trivial convex subgroup.
\end{fact}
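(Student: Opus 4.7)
The proof plan rests on the classical fact that the theory DOAG of divisible ordered abelian groups admits quantifier elimination in $\Loag$, and is $o$-minimal: every $\Loag$-definable (with parameters) subset of $\Gamma$ is a finite union of points and intervals with endpoints in $\Gamma \cup \{\pm\infty\}$. I would invoke this at the start and reduce the statement to an elementary computation.

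First, let $H \subseteq \Gamma$ be an $\Loag$-definable convex subgroup. Being convex, $H$ is itself an interval (possibly with infinite endpoints), since any convex finite union of intervals is a single interval. Since $H$ is a subgroup, $0 \in H$ and $H$ is symmetric: $x \in H \iff -x \in H$. Combining convexity, symmetry and definability, $H$ must be one of $\{0\}$, $(-b, b)$, $[-b, b]$ (with $b \in \Gamma$, $b > 0$), or $\Gamma$ itself.

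The next step is to rule out the two bounded non-trivial options, using divisibility. If $H = [-b, b]$ with $b > 0$ in $\Gamma$, then $b \in H$ forces $2b \in H$, but $2b > b$, contradicting $H = [-b,b]$. If $H = (-b,b)$ with $b > 0$, then by divisibility $b/2 \in \Gamma$ and $0 < b/2 < b$, so $b/2 \in H$; being a subgroup, $b = 2 \cdot (b/2) \in H$, contradicting $b \notin (-b,b)$. Hence $H = \{0\}$ or $H = \Gamma$.

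No genuine obstacle arises here; the only point that requires care is noting that $\Loag$-definability with parameters is still controlled by quantifier elimination, so that convex definable sets are genuinely intervals with endpoints in $\Gamma$ (not in some larger structure). Divisibility enters in precisely one place, namely to produce $b/2$ in the open-interval case, which is exactly where the failure of the statement in non-divisible groups (for instance $\mathbb{Z}$, where $\{0\}$ is not of this open form) would show up.
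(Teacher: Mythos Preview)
Your argument is correct and is exactly the approach the paper gestures at: the fact is stated there without proof, merely with the remark that it ``follows from quantifier elimination,'' and your use of $o$-minimality of DOAG together with the elementary interval analysis is the standard way to unpack that. One small inessential quibble: your closing parenthetical is misleading, since $\mathbb{Z}$ itself has no proper non-trivial convex subgroup and so is not a witness to failure of the statement; genuine counterexamples live in non-divisible groups of higher rank.
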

\begin{lemma}
	\label{divisibility-tame}
	Suppose $(K,v)$ is a tame valued field of positive residue characteristic, with $vK$ divisible. If $w$ is a proper non-trivial coarsening of $v$, then $w$ cannot be $\Lval$-definable. In particular, it cannot be $\Lring$-definable.
\end{lemma}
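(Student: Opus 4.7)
The plan is to contradict the classical fact that a divisible ordered abelian group has no proper non-trivial $\Loag$-definable convex subgroup. Coarsenings of $v$ on $K$ correspond bijectively to convex subgroups of $vK$; the hypothesis that $w$ is a proper non-trivial coarsening of $v$ translates to the associated convex subgroup $H \subseteq vK$ being both proper and non-trivial. The whole argument will show that any $\Lval$-definition of $\O_w$ in $(K,v)$ would produce an $\Loag$-definition of $H$ in $vK$, which is forbidden by divisibility.

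First, I would extract $H$ as an $\Lval$-definable set in the value-group sort. If $\O_w$ is $\Lval$-definable in $(K,v)$, then so is the group of units $\O_w^\times = \{x \in \O_w \setminus \{0\} : x^{-1} \in \O_w\}$. By construction of the coarsening associated to $H$, one has $v(\O_w^\times) = H$, so $H$ is $\Lval$-definable as a subset of $vK$ inside the $\Lval$-structure $(K,v)$.

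The key non-routine step is the transfer from $\Lval$-definability to $\Loag$-definability, and this is where the tameness and positive residue characteristic hypotheses enter. I would invoke Fact \ref{lem:tame-se}: since $(K,v)$ is tame of positive residue characteristic, $vK$ is purely stably embedded as an ordered abelian group, so every $\Lval$-definable subset of $vK^n$ is already $\Loag$-definable with parameters from $vK$. Applied to $H$, this gives an $\Loag$-definition of a proper non-trivial convex subgroup of the divisible ordered abelian group $vK$, which contradicts the fact recalled just before the statement. The \emph{in particular} clause is immediate, because every $\Lring$-definable subset of $K$ is a fortiori $\Lval$-definable. The only substantive ingredient is the appeal to pure stable embeddedness of the value group; everything else is bookkeeping with the correspondence between coarsenings and convex subgroups.
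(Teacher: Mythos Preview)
Your proposal is correct and follows essentially the same route as the paper: both extract the convex subgroup $H = v(\O_w^\times)$ as a definable subset of $vK$, invoke Fact~\ref{lem:tame-se} to upgrade this to $\Loag(vK)$-definability, and then contradict the fact that divisible ordered abelian groups have no proper non-trivial definable convex subgroups. The only cosmetic difference is that the paper works in the three-sorted language $\Lthree$ (where $vK$ is an honest sort and Fact~\ref{lem:tame-se} is stated), whereas you phrase everything in the one-sorted $\Lval$; since the two presentations are bi-interpretable this is immaterial.
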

\begin{proof}
	Suppose $w$ is $\Lval$-definable (possibly with parameters); 
	then the corresponding proper non-trivial convex subgroup
    \[ 
        \Delta = \{x \in vK \mid \exists y(v(y) = x \land w(y) = 0)\},
    \]
    is $\Lthree$-definable (possibly with parameters) as a subset of $vK$.
	By Fact \ref{lem:tame-se}, $\Delta$ is $\Loag(vK)$-definable. This is a contradiction, as divisible ordered abelian groups have no definable proper non-trivial convex subgroups.
\end{proof}
The proof of the following theorem is then adapted directly from \cite[Corollary 6.1]{jahnke-koenigsmann2017defining-coarsenings}.
\begin{theorem}
    \label{thm:meta}
    Let $(K,v_K)$ be such that $Kv_K$ is perfect of positive characteristic. Assume that $K$ admits a definable non-trivial henselian valuation. Then, \textit{at least one} of the following holds:
    \begin{enumerate}
		\item \condsep,
		\item \condthens,
		\item \conddivelem,
		\item \conddiv,
		\item \conddef,
		\item \conddefelem.
	\end{enumerate}
\end{theorem}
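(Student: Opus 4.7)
The natural strategy is to prove the contrapositive: assuming none of conditions 1--6 hold, I would show that $K$ admits no $\Lring$-definable non-trivial henselian valuation. Combining these negations with the perfectness of $Kv_K$, one sees immediately that $(K,v_K)$ is tame: $v_KK$ is divisible (hence $p$-divisible) by the negation of 4, $(K,v_K)$ is defectless by the negation of 5, and $Kv_K$ is perfect by hypothesis. The negation of 1 gives $v_K\in H_1(K)$, so that every henselian valuation on $K$ is a coarsening of $v_K$. Suppose for contradiction that $v$ is a non-trivial henselian valuation on $K$, defined by an $\Lring$-formula $\phi$. If $v\neq v_K$, then $v$ is a proper non-trivial coarsening of the tame $v_K$, and Lemma~\ref{divisibility-tame} produces an immediate contradiction.

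The main obstacle is the remaining case $v=v_K$, in which $v_K$ itself is $\Lring$-definable. The plan here is to pass to a sufficiently saturated $\Lring$-elementary extension $K^*\succeq K$ (for instance an $\aleph_1$-saturated ultrapower, chosen so that the induced residue field is also $\aleph_1$-saturated). Setting $v^* := \phi^{K^*}$, the translation of $\Lval$-formulas into $\Lring$-formulas using $\phi$ gives $(K,v_K)\preceq(K^*,v^*)$ in $\Lval$; in particular $v^*$ is a non-trivial $\Lring$-definable henselian valuation on $K^*$ and $(K^*,v^*)$ is tame. The residue field $L:=K^*v^*$ is an $\aleph_1$-saturated $\Lring$-elementary extension of $Kv_K$, hence t-henselian by the negation of 2, so Fact~\ref{lem:prestel-ziegler} delivers a non-trivial $v_L$. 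Since $Kv_K$, and therefore $L$, is not separably closed (negation of 1, preserved under elementary equivalence), $v^*\in H_1(K^*)$; Remark~\ref{lem:correspondence-ref-res} then identifies $v_{K^*}=v_L\circ v^*$, which is a strict refinement of $v^*$. Consequently $v^*$ is a proper non-trivial coarsening of $v_{K^*}$.

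To close the argument I would verify that $(K^*,v_{K^*})$ is itself tame, so that Lemma~\ref{divisibility-tame} applies to give the final contradiction. Applying the negations of 3 and 6 to $L\succeq Kv_K$ yields that $v_LL$ is divisible and $(L,v_L)$ defectless; since $L$ is perfect (elementary equivalence) and $v_LL$ is $p$-divisible, $p$-th roots lift, so $Lv_L$ is perfect and $(L,v_L)$ is tame. The residue field of $v_{K^*}$ is $Lv_L$ (perfect), and the short exact sequence $0\to v_LL\to v_{K^*}K^*\to v^*K^*\to 0$ shows $v_{K^*}K^*$ is divisible. The main technical subtlety is that defectlessness is preserved under composition: from the defect formula one reads off that the defect of a finite extension with respect to $v_L\circ v^*$ factorises as the product of the defects for $v^*$ and for the induced $v_L$, so tameness of both levels gives tameness of the composition. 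Once $(K^*,v_{K^*})$ is tame, Lemma~\ref{divisibility-tame} forbids any proper non-trivial coarsening from being $\Lval$-definable; but $v^*$ is exactly such a coarsening, definable by $\phi$, and we have the required contradiction.
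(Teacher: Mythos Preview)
The proposal is correct and follows essentially the same approach as the paper. The only organizational difference is your case split on whether the definable valuation equals $v_K$: the paper avoids this by passing directly to an $\aleph_1$-saturated $\Lval$-elementary extension $(M,v)\succeq(K,v_K)$ and observing that any $\Lring$-definable henselian valuation on $K$ lifts to a (not necessarily proper) coarsening of $v$, hence a proper coarsening of $v_M$, regardless of whether it coincides with $v_K$.
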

\begin{proof}
    We assume
    \[ \neg 1 \land \neg 2 \land \neg 3 \land \neg 4 \land \neg 5 \land \neg 6,\]
    and want to show that \(K\) then cannot admit a non-trivial definable henselian valuation.
	Note that \(\neg 1\) means that $H_2(K) = \emptyset$. Thus, all henselian valuations on $K$ are linearly ordered and coarser than $v_K$. 
	
	Now, let ${(M,v) \succeq (K,v_K)}$ be an $\aleph_1$-saturated extension in $\Lval$. 
	Then, ${Mv \succeq Kv_K}$ is an $\aleph_1$-saturated elementary extension of a t-henselian field (by $\neg 2$), hence it is henselian by Fact~\ref{lem:prestel-ziegler}. 
    Since ${L := Mv\equiv Kv_K}$ is henselian and not separably closed (by \(\neg 1\)), $v_L$ is non-trivial and in fact, by Remark~\ref{lem:correspondence-ref-res}, we have the following diagram (where the arrows are intended as places),
    \[\begin{tikzcd}
        M && L && {Lv_L = Mv_M}
        \arrow["v", from=1-1, to=1-3]
        \arrow["{v_L}", from=1-3, to=1-5]
        \arrow["{v_M}", curve={height=-28pt}, from=1-1, to=1-5]
    \end{tikzcd}\]
    from which we can deduce that $v_M = v_{L} \circ v$ is a proper refinement of $v$. 
    Now, since $v_LL$ is divisible (because of $\neg 3$), so is $v_MM$. 
    Indeed,
    \[
        vM = {v_MM}/{v_LL},
    \]
    and we know $vM$ to be divisible, as \(v_KK\) is (because of \(\neg 4\)).
    
    Now, by assumption \(Kv_K\) is a perfect field of positive characteristic and thus so is \(L=Mv\succeq Kv_K\). 
    It follows that also the residue field \(Lv_L=Mv_M\) of the valuation \(v_L\) on \(L\) is perfect. 
    By assumptions \(\neg 5\) and \(\neg 6\), \(v\) and $v_L$ are defectless. 
    Thus $v_M$ is defectless (as the composition of defectless valuations is defectless, see \cite[Lemma 2.9]{anscombe2024characterizing}), and  $(M,v_M)$ is a tame valued field. 
    
    Assume now that there is a non-trivial definable henselian valuation $u$ on $K$, with valuation ring defined by the \(\Lring(K)\)-formula $\psi(x)$. 
    Because of \(\neg 1\), $u$ is a (not necessarily proper) coarsening of $v_K$.
    Now, via the elementary embedding, $\psi(x)$ defines a valuation $u^*$ on $M$, which is a coarsening of $v$ and hence a proper coarsening of $v_M$. This is a contradiction to Lemma \ref{divisibility-tame}.
\end{proof}

\subsection{Failure of divisibility as a source of definability} We now seek to extend the arguments given in \cite[Proposition 5.5]{jahnke-koenigsmann2017defining-coarsenings} of the case when $v_KK$ is itself divisible, but there is an elementary extension $L$ of $Kv_K$ which is henselian with non-divisible value group $v_LL$.

\begin{proposition}
	\label{definability}
	Let $K$ be a henselian field, and let $v_K$ be its canonical henselian valuation. If
	\begin{enumerate}
		\item there is \(L\equiv Kv_K\) such that $v_L L$ is non-divisible, \textit{and}
		\item $v_K K$ is divisible,
	\end{enumerate}
	then $K$ admits an $\emptydefinition$-definable non-trivial henselian valuation.
\end{proposition}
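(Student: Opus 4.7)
The plan is to construct a suitably saturated elementary extension of $(K, v_K)$ in which condition (3) of Theorem \ref{jk} applies, and then transfer the resulting $\emptydefinition$-definable henselian valuation back to $K$ by $\Lring$-elementarity. If $Kv_K$ is separably closed or not t-henselian, parts (1) and (2) of Theorem \ref{jk} already produce the desired valuation, so we may assume throughout that $Kv_K$ is t-henselian and not separably closed. Fix a prime $p$ for which $v_L L$ is not $p$-divisible.

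Let $(M, v) \succeq (K, v_K)$ be $\aleph_1$-saturated in $\Lval$; then $Mv \succeq Kv_K$ in $\Lring$ is $\aleph_1$-saturated, t-henselian (preserved under elementary equivalence), and not separably closed, so by Fact \ref{lem:prestel-ziegler} it is henselian. Denote by $u := v_{Mv}$ the canonical henselian valuation on $Mv$; since $Mv$ is not separably closed, $v \in H_1(M)$, and Remark \ref{lem:correspondence-ref-res} identifies $v_M := u \circ v$ as the canonical henselian valuation on $M$. One then has the short exact sequence
\[ 0 \to u(Mv) \to v_M M \to vM \to 0 \]
of ordered abelian groups, where $vM$ is divisible (as an elementary extension of the divisible group $v_K K$) and non-trivial (since $K$ is not separably closed).

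The crux is to show that $u(Mv)$ is not $p$-divisible. We use a resplendence argument: the pair $(L, v_L)$ witnesses the consistency of the $\Lval$-theory $T^*$ consisting of $\operatorname{Th}_{\Lring}(Kv_K)$ together with the axioms ``$v$ is a non-trivial henselian valuation with non-$p$-divisible value group''. Since $Mv \equiv_{\Lring} L$ is sufficiently saturated, we may expand it by a predicate $u^*$ so that $(Mv, u^*) \models T^*$. Every henselian valuation on $Mv$ is comparable to $u$, and assuming $H_2(Mv) = \emptyset$, the canonical $u$ is the finest henselian valuation on $Mv$, whence $u^*$ is a coarsening of $u$ and $u^*(Mv)$ is a quotient of $u(Mv)$. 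Because quotients of $p$-divisible ordered abelian groups are $p$-divisible, non-$p$-divisibility of $u^*(Mv)$ forces non-$p$-divisibility of $u(Mv)$.

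Consequently $v_M M$ is not $p$-divisible (witnessed by the convex subgroup $u(Mv)$) and not $p$-antiregular (witnessed by the non-trivial $p$-divisible quotient $vM$), so Theorem \ref{jk} (3) produces an $\Lring$-formula without parameters defining a non-trivial henselian valuation ring on $M$. By $\Lring$-elementarity $K \preceq M$, the same formula defines a non-trivial henselian valuation ring on $K$. The main obstacle is the resplendence step together with the verification of $H_2(Mv) = \emptyset$, which together ensure the comparability needed to propagate non-$p$-divisibility from the witness $u^*$ to the canonical $u$.
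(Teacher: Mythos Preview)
Your overall strategy matches the paper's: pass to an elementary extension where the canonical valuation has non-$p$-divisible, non-$p$-antiregular value group, apply Theorem~\ref{jk}(3), and pull back. However, the two steps you yourself flag as obstacles are not actually carried out, and the resplendence step does not work as stated.

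An $\aleph_1$-saturated structure need not be resplendent for countable \emph{theories} in an expanded language; that holds for saturated or special models, not merely $\aleph_1$-saturated ones. So you cannot in general expand $Mv$ to a model of the infinite $\Lval$-theory $T^*$. The paper sidesteps this with a concrete construction: via Keisler--Shelah, pick an ultrafilter $\mathcal U$ with $(Kv_K)^{\mathcal U}\cong L^{\mathcal U}$, set $(M,v)\coloneqq(K,v_K)^{\mathcal U}$, and take $u^*\coloneqq v_L^{\mathcal U}$ directly. This is already a henselian valuation on $Mv=L^{\mathcal U}$ with non-divisible value group, with no appeal to saturation or resplendence.

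The verification that $u^*$ coarsens the canonical $u=v_{Mv}$ (your ``$H_2(Mv)=\emptyset$'') is short but must be supplied. The paper proves the analogous $v_L\in H_1(L)$: otherwise $Lv_L$ is separably closed, so Theorem~\ref{jk} yields an $\emptydefinition$-definable non-trivial henselian valuation on $L\equiv Kv_K$; but $Kv_K$ admits no non-trivial henselian valuation by Lemma~\ref{henselian-sep-closed} (since you have already reduced to $Kv_K$ not separably closed), a contradiction. It follows that $v_L^{\mathcal U}\in H_1(L^{\mathcal U})$, so the canonical $v_{L^{\mathcal U}}$ refines it, and non-$p$-divisibility propagates exactly as you intend.
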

\begin{proof} 
	Let \(L\equiv Kv_K\) be such that $v_L L$ is non-divisible.
	First, we notice that $Kv_K$ is not separably closed, and hence by Lemma \ref{henselian-sep-closed} it does not admit any non-trivial henselian valuation.
	Indeed, if $Kv_K$ were separably closed, $L$ would also be separably closed, but separably closed valued fields always have divisible value group, and $v_LL$ is not divisible by assumption.
	
	We now claim that $v_L \in H_1(L)$. 
	Assume for a contradiction that it is not. 
	Then by Theorem~\ref{jk}, there is an $\emptydefinition$-definable non-trivial henselian valuation on $L$. 
	Thus there is also an $\emptydefinition$-definable non-trivial henselian valuation $u$ on $Kv_K$ by elementary equivalence, contradicting the considerations in the previous paragraph.
	
	By the Keisler-Shelah isomorphism theorem \cite[Theorem~2.5.36]{marker2006model}, there are an index set $I$ and an ultrafilter $\mathcal U$ on $I$ such that $L^{\mathcal U} \cong (Kv_K)^{\mathcal U}$. Take the ultrapower $(L^{\mathcal U}, v_L^{\mathcal U}) := (L,v_L)^{\mathcal U}$ as an $\Lval$-structure. Then, $v_L^{\mathcal U}L^{\mathcal U} \equiv v_LL$ is not divisible. Since $v_L \in H_1(L)$, then $v_L^{\mathcal U} \in H_1(L^{\mathcal U})$ and thus $v_L^{\mathcal U}$ coarsens the canonical henselian valuation $v_{L^{\mathcal U}}$ on $L^{\mathcal U}$. In particular, also $v_{L^{\mathcal U}}L^{\mathcal U}$ cannot be divisible, say not $q$-divisible for some prime $q$. Moreover, if we let $(K^{\mathcal U}, v_K^{\mathcal U}) := (K,v_K)^{\mathcal U}$ as an $\Lval$-structure, then $K^{\mathcal U}v_K^{\mathcal U} = L^{\mathcal U}$ and $v_K^{\mathcal U}K^{\mathcal U} \equiv v_KK$ is divisible. Now,
	\begin{itemize}
		\item $K^{\mathcal U}$ is henselian and $v_{K^{\mathcal U}} = v_{L^{\mathcal U}}\circ v_K^{\mathcal U} $ (Remark~\ref{lem:correspondence-ref-res}),
		\item $v_{L^{\mathcal U}}L^{\mathcal U} \subseteq v_{K^{\mathcal U}}K^{\mathcal U}$ is a convex subgroup and thus also \(v_{K^{\mathcal U}}K^{\mathcal U}\) is not $q$-divisible, and
		\item $v_{K^{\mathcal U}}K^{\mathcal U}/v_{L^{\mathcal U}}L^{\mathcal U} \cong v_K^{\mathcal U} K^{\mathcal U}$ is divisible, hence $v_{K^{\mathcal U}}K^{\mathcal U}$ is not $q$-antiregular.
	\end{itemize}
	We can then apply Theorem~\ref{jk} to $(K^{\mathcal U}, v_{K^{\mathcal U}})$ (note the different valuation!) and find an $\emptydefinition$-definable non-trivial henselian valuation on $K^{\mathcal U}$. Since ${K \preceq K^{\mathcal U}}$, the same $\Lring$-formula defines a non-trivial henselian valuation ring on $K$.
\end{proof}

\section{Defect}
\label{section:defect}
Defect -- i.e. the quantity $d$ in equation (\ref{defect-eqn}) -- measures how much of the finite extension of a henselian valued field $(K,v)$ is not induced by extensions of its value group $vK$ and residue field $Kv$. Originally introduced by Ostrowski (see \cite{roquette2002history} for a thorough summary of his work), it was then extensively studied by Franz-Viktor Kuhlmann in his thesis \cite{kuhlmann1990henselian} and his subsequent work. While in classical number theoretic situations, defect is trivial, it is a crucial invariant in the model theory of valued fields, in a certain sense \textit{limiting} the Ax-Kochen/Ershov philosophy. In residue characteristic zero, every extension is defectless; this is why, for example, the classical Ax-Kochen/Ershov theorem can be proven for all henselian equicharacteric zero valued fields.

Valued fields of positive residue characteristic showcase wildly different behaviour. In particular, the fundamental equality can fail, i.e. there might be an extension with $d > 1$. This is usually a serious obstruction to most efforts in understanding complete theories of valued fields. Throughout this section, however, we will deploy such extensions to define henselian valuations.

\subsection{A toy situation} Suppose that $v \in H_1(K)$ and  there exists an $\Lring$-definable subset $D$ of $K$ with
\[
v(D) = \{\gamma \in vK \mid \gamma > H\},
\]
where $H$ is a proper convex subgroup of $vK$.
\begin{center}
\tikzset{every picture/.style={line width=0.75pt}} 

\begin{tikzpicture}[x=0.75pt,y=0.75pt,yscale=-1,xscale=1]

\draw    (100,120) -- (222,120) ;
\draw [color={rgb, 255:red, 208; green, 2; blue, 27 }  ,draw opacity=1 ]   (222,120) -- (344,120) ;
\draw [shift={(344,120)}, rotate = 180] [color={rgb, 255:red, 208; green, 2; blue, 27 }  ,draw opacity=1 ][line width=0.75]      (5.59,-5.59) .. controls (2.5,-5.59) and (0,-3.09) .. (0,0) .. controls (0,3.09) and (2.5,5.59) .. (5.59,5.59) ;
\draw [shift={(222,120)}, rotate = 0] [color={rgb, 255:red, 208; green, 2; blue, 27 }  ,draw opacity=1 ][line width=0.75]      (5.59,-5.59) .. controls (2.5,-5.59) and (0,-3.09) .. (0,0) .. controls (0,3.09) and (2.5,5.59) .. (5.59,5.59) ;
\draw [color={rgb, 255:red, 65; green, 117; blue, 5 }  ,draw opacity=1 ]   (344,120) -- (466,120) ;
\draw [color={rgb, 255:red, 208; green, 2; blue, 27 }  ,draw opacity=1 ]   (281,115) -- (281,126.5) ;

\draw (273,90.4) node [anchor=north west][inner sep=0.75pt]  [color={rgb, 255:red, 208; green, 2; blue, 27 }  ,opacity=1 ]  {$H$};
\draw (388,92.4) node [anchor=north west][inner sep=0.75pt]  [color={rgb, 255:red, 65; green, 117; blue, 5 }  ,opacity=1 ]  {$v( D)$};
\draw (478,110.4) node [anchor=north west][inner sep=0.75pt]    {$vK$};
\draw (278,133.4) node [anchor=north west][inner sep=0.75pt]    {$0$};

\end{tikzpicture}
\end{center}
We will prove that whenever $(K,v)$ is as above, then there is a definable coarsening of $v$ which corresponds to $H$, see Subsection \ref{subsec:coarsening-decomposition}. This is a simplified version of the situation that will show up later when working with independent defect extensions. All the main ideas are already contained in this proof, with the advantage that we can avoid the technical subtleties of working modulo an interpretation of a finite field extension, which make the proof of Theorem \ref{lem:ind-val} cumbersome.

\begin{remark}
	\label{rem:beth}
	We will repeatedly use Beth's definability theorem (\cite[Theorem 5.5.4]{hodges1997shorter}). In our case, this means that in order to show that a certain subset $D \subseteq K^n$ is $\Lring(\vec{c})$-definable on $K$, where $\vec{c}$ are some candidate parameters, it is enough to work in the augmented language ${\L = \Lring(\vec{c}) \cup \{P(X_1, \dots X_n)\}}$, and show that that whenever we take two structures ${(L,\vec{c}',D_1), (L,\vec{c}',D_2) \equiv_{\L} (K,\vec{c},D)}$, we have $D_1 = D_2$. In particular, in the proof of Lemma \ref{lem:meta-ind}, we will consider valuation rings and thus unary predicates.
\end{remark}

\begin{lemma}
    \label{lem:meta-ind}
    Let $(K,v)$ be a valued field such that $v \in H_1(K)$. Let $D \subseteq K$ be an $\Lring$-definable subset such that, for some proper convex subgroup $H \subseteq vK$ (possibly equal to the trivial subgroup),
	\[
	\{\alpha \in vK \mid \alpha > H\} = \{v(d) \mid d \in D\}.
	\]
	Let $v_H$ be the coarsening of $v$ corresponding to $H$ and denote by $\O_H$ its valuation ring. Then $v_H$ is definable.
\end{lemma}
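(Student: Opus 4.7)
The plan is to invoke Beth's definability theorem as in Remark~\ref{rem:beth}, introducing a new unary predicate $P$ intended to be interpreted by $\O_H$. Writing $\vec c$ for the parameters used to define $D$ via some $\Lring(\vec c)$-formula $\phi$, the goal reduces to showing: for every pair of expansions $(L,\vec c',\O_1)$ and $(L,\vec c',\O_2)$ of the same underlying $L$ which are both elementarily equivalent to $(K,\vec c,\O_H)$ in $\Lring(\vec c)\cup\{P\}$, one has $\O_1=\O_2$.

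The axioms I would transfer from $(K,\vec c,\O_H)$ to force this are: (a) $P$ is a henselian valuation ring whose residue field is not separably closed, so that the corresponding valuation lies in $H_1$; (b) every element satisfying $\phi$ lies in $\mathfrak{m}_P$; and (c) for every nonzero $x\in\mathfrak{m}_P$ there exists $d$ satisfying $\phi$ with $x/d\in P^\times$. Properties (b) and (c) jointly encode the assumption $v(D)=\{\alpha>H\}$ purely in terms of $\O_H$ and $D$, without reference to the finer valuation $v$ (which is not in our language). That (a) really holds in $(K,\vec c,\O_H)$ uses $v\in H_1(K)$ together with the fact that the residue field $Kv$ is obtained from $Kv_H$ by applying the induced valuation; if $Kv_H$ were separably closed, then $Kv$ would inherit this via Hensel's lemma, contradicting $v\in H_1(K)$. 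Hence $\O_H\in H_1(K)$ and by elementary equivalence $\O_1,\O_2\in H_1(L)$.

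Once (a)--(c) transfer to both $(L,\vec c',\O_i)$, I exploit that $H_1(L)$ is linearly ordered by coarsening, so without loss of generality $\O_1\supseteq\O_2$, with $v_1=(v_2)_{H'}$ for some convex subgroup $H'\subseteq v_2L$. Suppose for contradiction that $H'\neq\{0\}$: pick $\gamma\in H'$ with $\gamma>0$ and any $x\in L$ with $v_2(x)=\gamma$. Then $x\in\mathfrak{m}_2\setminus\{0\}$, so (c) applied to $\O_2$ yields some $d$ satisfying $\phi$ with $v_2(d)=\gamma\in H'$; hence $v_1(d)=0$, so $d\notin\mathfrak{m}_1$. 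This contradicts (b) for $\O_1$, which forces $d\in\mathfrak{m}_1$. Therefore $H'=\{0\}$ and $\O_1=\O_2$, as required, and Beth's theorem gives the desired $\Lring(\vec c)$-definability of $\O_H$.

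The point I expect to need the most care is the formulation of (b) and (c) as honest first-order axioms in $\Lring(\vec c)\cup\{P\}$ that genuinely recover the hypothesis $v(D)=\{\alpha>H\}$ without $v$ being available, together with expressing (a) (``residue field not separably closed'') as a first-order property of the ring $P$. Once this setup is fixed, the argument reduces to the short convex-subgroup manipulation above, so the conceptual heart of the proof is really the compatibility between the assumption on $v(D)$ and the linear order on $H_1(L)$.
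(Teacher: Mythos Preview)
Your proposal is correct and follows essentially the same route as the paper: both apply Beth's definability theorem, transfer a first-order characterization of $\O_H$ in terms of $D$, and then use comparability of $H_1$-valuations to force $\O_1=\O_2$. The only cosmetic difference is that the paper packages (b) and (c) into a single biconditional characterizing the \emph{complement} of $\O_H$ (namely $x\notin\O_H\iff\exists u\in\O_H^\times\,\exists y\in D\,(x=u/y)$) and finishes with a direct element chase rather than your convex-subgroup formulation, but the two arguments are interchangeable.
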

\begin{proof}
    We will use Beth's definability theorem as explained in Remark \ref{rem:beth}. We work in $\Lval$: let $L$ and $\O_1, \O_2 \subseteq L$ be such that ${(L,\O_1), (L,\O_2) \equiv (K,\O_H)}$. In particular, both $\O_1$ and $\O_2$ are henselian valuation rings on $L$, corresponding to valuations $v_1$ and $v_2$. We want to show that $\O_1 = \O_2$. 
	Note that, since $v_H \in H_1(K)$, the same holds for $v_1$ and $v_2$, and thus \(\O_1\) and \(\O_2\) are comparable. Without loss of generality, assume $\O_1 \subseteq \O_2$. 
	This implies that $\O_1^\times \subseteq \O_2^\times$.
	We note that, if $D = \psi(K)$ for some $\Lring$-formula $\psi$ (possibly with parameters), then
	\[
		(K,\O_H) \vDash \forall x \left(x \notin \O_H \iff \exists u \exists y \left(u \in \O_H^\times \land \psi(y) \land \left(x = u\frac{1}{y}\right)\right)\right).
	\]
	The same $\Lval$-sentence is then true in both $(L,\O_1)$ and $(L,\O_2)$.

	Now, towards proving $\O_1 = \O_2$, assume for a contradiction that there is $x \in \O_2$ but $x \notin \O_1$: then, there exist $u \in \O_1^\times \subseteq \O_2^\times$ and $d \in \psi(L)$ such that $x = u\frac{1}{d}$. In particular, then, by the $\Lval$-sentence above, $x \notin \O_2$, a contradiction. It follows that $\O_1 = \O_2$.
    \par We can apply Beth's definability theorem and get that $\O_H = \varphi(K)$ for some $\Lring$-formula $\varphi(x)$.
\end{proof}
\begin{remark}
	Beth's definability theorem grants some control over parameters; namely, if $D$ was $\emptydefinition$-definable to begin with, then so is $\O_H$. Moreover, the same argument works if we replace $\Lring$ with some expansion $\L$; we then get that $v_H$ is $\L$-definable.
\end{remark}

\subsection{Independent defect}

Independent defect will be the central tool of our definability proof; the set $\Sigma_L$ which will be defined in a moment, while not $\Lring$-definable per se, will contain enough information to $\Lring$-define a henselian valuation ring modulo the interpretation of a finite field extension.

Note that every equicharacteristic \(0\) valued field is defectless, so for the rest of this section we assume that our valued fields have residue characteristic \(p>0\).

\begin{definition}[{\cite[Introduction]{kuhlmann2023valuation}}]
    Let $(K,v) \subseteq(L,v)$ be a Galois extension of degree $p$ with defect. For any $\sigma \in \Gal{L}{K} \setminus \{\mathrm{id}\}$, we let 
    \[
        \Sigma_\sigma \coloneqq \left\{v\left(\frac{\sigma f - f}{f}\right) \mathrel{\Big|} f \in L^\times\right\}.
    \]
    \end{definition}

\begin{fact}[{\cite[Theorems 3.4 and 3.5]{kuhlmann2023valuation}}]
\label{fact:sigma}
    $\Sigma_\sigma$ does not depend on the choice of $\sigma$, and is a final segment of $\infvK{vK}$. We denote it by $\Sigma_L$.
\end{fact}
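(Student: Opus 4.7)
The plan is to establish independence of $\sigma$ via a short telescoping computation, and then to exploit the fact that a Galois extension of prime degree $p$ with defect must be immediate in order to verify the final segment property by a translation trick.

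For independence, fix $\sigma, \tau \in \Gal{L}{K} \setminus \{\mathrm{id}\}$. Since $\Gal{L}{K}$ is cyclic of order $p$, we have $\tau = \sigma^k$ for some $1 \leq k \leq p - 1$. The telescoping identity
\[
    \sigma^k f - f \;=\; \sum_{i=0}^{k-1} \sigma^i(\sigma f - f),
\]
combined with the fact that $v$ is $\sigma$-invariant (as it is the unique extension to a Galois extension of a henselian valued field), yields
\[
    v\!\left(\frac{\sigma^k f - f}{f}\right) \;\geq\; v\!\left(\frac{\sigma f - f}{f}\right)
\]
for every $f \in L^\times$. Applying the same inequality with $\sigma$ replaced by $\sigma^k$ and $k$ replaced by $k' \equiv k^{-1} \pmod p$ gives the reverse inequality (using $\sigma^{kk'} = \sigma$), hence termwise equality, so $\Sigma_\sigma = \Sigma_{\sigma^k}$.

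For the final segment property, the fundamental equation $[L:K] = d\cdot(vL:vK)\cdot[Lv:Kv]$ applied to a Galois extension of prime degree $p$ with defect forces $d = p$ and $(vL:vK) = [Lv:Kv] = 1$, so $L/K$ is immediate; in particular $v(f) \in vK$ for every $f \in L^\times$. The value $\infty$ always lies in $\Sigma_\sigma$, witnessed by any $f \in K^\times$. Suppose now $\alpha = v((\sigma f - f)/f) < \infty$ (which forces $f \notin K$), and let $\beta \in vK$ with $\beta > \alpha$. The plan is to replace $f$ by $f - a$ for a suitable $a \in K^\times$, noting that this leaves $\sigma f - f$ unchanged while shifting the value of the denominator to $v(a)$. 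Explicitly, set $\gamma := v(f) + \alpha - \beta$, which lies in $vK$ and satisfies $\gamma < v(f)$; picking $a \in K^\times$ with $v(a) = \gamma$, the ultrametric inequality gives $v(f - a) = \gamma$, and a short computation yields
\[
    v\!\left(\frac{\sigma(f-a) - (f-a)}{f-a}\right) \;=\; (\alpha + v(f)) - \gamma \;=\; \beta,
\]
so $\beta \in \Sigma_\sigma$.

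The main obstacle is identifying the right substitute for $f$: once one realizes that translating $f \mapsto f - a$ leaves $\sigma f - f$ unchanged while allowing the denominator's valuation to take any value in $vK$ strictly below $v(f)$, both claims fall out cleanly. The role of immediacy here is essential, since without $v(f) \in vK$ there might be no $a \in K^\times$ with the required value.
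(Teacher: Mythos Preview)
Your argument is correct. The paper itself does not prove this statement --- it is recorded as a \emph{Fact} and cited from \cite[Theorems 3.4 and 3.5]{kuhlmann2023valuation} --- so there is no in-paper proof to compare against. Your self-contained verification is the natural one: the telescoping identity together with $\Gal{L}{K}$-invariance of $v$ yields the termwise equality $v((\sigma^k f - f)/f)=v((\sigma f - f)/f)$, and the translation $f \mapsto f-a$ (which leaves $\sigma f - f$ untouched while pushing $v(f-a)$ down to any prescribed value $\gamma<v(f)$ in $vK$) delivers the final segment property. Your identification of immediacy as the key ingredient --- ensuring $v(f),\alpha\in vK$, hence $\gamma\in vK$, so that a suitable $a\in K^\times$ exists --- is exactly right.
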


\begin{definition}[{\cite[Introduction]{kuhlmann2023valuation}}]
    \label{ind-def}
    The extension $(K,v) \subseteq(L,v)$ has \emph{independent defect} if there is a (possibly trivial) proper convex subgroup $H \subseteq vK$ such that $vK/H$ does not have a minimum positive element, and further
    \[
        \Sigma_L = \{\alpha \in \infvK{vK} \mid \alpha > H\}.
    \]
    We say that the extension has \emph{dependent defect} otherwise.
\end{definition}
    
\begin{definition}[{\cite[Introduction]{kuhlmann2023valuation}}]
         Let \((K,v)\) be a henselian valued field of positive residue characteristic \(p\). If \(\chara(K)=0\), then let \(K'\coloneqq K(\zeta_p)\) where \(\zeta_p\) is a primitive \(p\)th root of unity. Otherwise let \(K'\coloneqq K\). We denote the unique extension of \(v\) to \(K'\) also by \(v\). We say that $(K,v)$ is an \emph{independent defect ﬁeld} if all Galois defect extensions of $(K',v)$ of degree $p$ have independent defect.	
\end{definition}

We now prove a result on composition of independent defect valuations with defectless valuations, which will be quite useful later.

\begin{lemma}
	\label{lem:decomposition-indep-defect}
	Let  $(K,v)$ be a henselian valued field with $v=\barv\circ w$, such that $(K,w)$ is defectless and $(Kw,\barv)$ is a perfect independent defect field. Then $(K,v)$ is an independent defect field.
\end{lemma}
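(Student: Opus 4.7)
The strategy is to lift the independent defect structure from $(Kw, \bar{v})$ to $(K, v)$ through the decomposition $v = \bar{v} \circ w$. The defectlessness of $(K,w)$ forces any Galois defect extension of $(K', v)$ of degree $p$ (with $K' = K(\zeta_p)$ or $K' = K$ as appropriate) to concentrate its entire defect inside the residue extension for $\bar{v}$, where the hypothesis on $(Kw, \bar{v})$ can be invoked.

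Take a Galois defect extension $(L,v)/(K',v)$ of degree $p$. Multiplicativity of defect ensures $(K',w')$ remains defectless, and $K'w'$ remains perfect as a finite extension of $Kw$. The fundamental equality for $v$ forces $(vL:vK') = [Lv:K'v] = 1$, and combining this with $p = (wL:wK') \cdot [Lw':K'w']$ (defectlessness of $w$) together with the refinement $(vL:vK') = (\bar{v}'Lw':\bar{v}'K'w') \cdot (wL:wK')$ yields $wL = wK'$, $(\bar{v}'Lw':\bar{v}'K'w') = 1$, $[(Lw')\bar{v}':(K'w')\bar{v}'] = 1$, and $[Lw':K'w'] = p$. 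Perfectness of $K'w'$ makes $Lw'/K'w'$ separable and normality of $L/K'$ makes it normal, so $Lw'/K'w'$ is a Galois defect extension of degree $p$ for $\bar{v}'$. Applying the independent defect hypothesis on $(Kw, \bar{v})$ to this residue extension (after tracking the $\zeta_p$-adjunction appropriately) yields a proper convex subgroup $H \subseteq \bar{v}'K'w'$ such that $\bar{v}'K'w'/H$ has no minimum positive element and
\[
    \Sigma^{\bar{v}'}_{Lw'} = \{\alpha \in (\bar{v}'K'w')_\infty : \alpha > H\}.
\]

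Viewing $H$ as a convex subgroup of $vK' \supseteq \bar{v}'K'w'$, I would show it witnesses independent defect for $L/K'$. Fix $\sigma \in \Gal{L}{K'} \setminus \{1\}$; the restriction map $\Gal{L}{K'} \to \Gal{Lw'}{K'w'}$ is an isomorphism (both are groups of order $p$ and $Lw'/K'w'$ is separable and normal, killing the inertia group), so $\bar{\sigma}$ is non-trivial. For any $f \in L^\times$, use $wL = wK'$ to reduce to $w(f) = 0$. If $w(\sigma f - f) = 0$, then $v((\sigma f - f)/f) = \bar{v}'((\bar{\sigma}\bar{f} - \bar{f})/\bar{f}) \in \Sigma^{\bar{v}'}_{\bar{\sigma}}$, hence lies above $H$; if $w(\sigma f - f) > 0$, then $v((\sigma f - f)/f)$ has positive image in $wK'$, so it sits strictly above $\bar{v}'K'w'$ and in particular above $H$. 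Thus $\Sigma^v_L \subseteq \{\delta \in (vK')_\infty : \delta > H\}$; the reverse inclusion follows by lifting any $g \in (Lw')^\times$ realizing an element of $\Sigma^{\bar{v}'}_{\bar{\sigma}}$ back to $L^\times$ and invoking the final-segment property of $\Sigma^v_L$ (Fact \ref{fact:sigma}). Finally, $vK'/H$ has no minimum positive element: within $\bar{v}'K'w'/H$ by hypothesis; any coset with non-trivial image in $wK'$ strictly dominates all of $\bar{v}'K'w'/H$, which itself has arbitrarily small positive elements.

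The main obstacle will be the mixed-characteristic bookkeeping around the adjunction of $\zeta_p$: when $\chara(Kw) = 0$ and $\chara(Kv) = p$, one must carefully verify that the hypothesis on $(Kw, \bar{v})$ (which concerns extensions of $(Kw)(\zeta_p)$) descends to yield what is needed at the level of $(K'w', \bar{v}')$, exploiting that the extension $K'w'/Kw$ has degree coprime to $p$. In all other subcases this issue is vacuous, and the argument proceeds directly as sketched above.
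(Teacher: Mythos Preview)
Your proposal is correct and follows essentially the same route as the paper: push the degree-$p$ Galois defect extension of $K'$ down to $K'w'$ via the defectlessness of $w$, invoke the independent-defect hypothesis on the residue field to obtain $H$, and then identify $\Sigma_L$ with $\{\gamma > H\}$ using $wL = wK'$ together with the final-segment property of $\Sigma_L$. The paper handles your flagged ``main obstacle'' by assuming without loss of generality that $\zeta_p \in K$ at the outset (harmless in the applications, where $\zeta_p$ has already been adjoined before the lemma is invoked), and organizes the $\Sigma$-computation as the single equality $\Sigma_L \cap (\bar v(Kw))_\infty = \Sigma_{Lw}$ rather than your explicit case split on $w(\sigma f - f)$; otherwise the two arguments coincide.
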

\begin{proof}
    Assume that \(v = \barv \circ w\), with \(w\) defectless and \((Kw, \barv)\) a perfect independent defect field. If \((K,v)\) has mixed characteristic \((0,p)\) we assume without loss of generality that \(K\) contains a primitive \(p\)-th root of unity.
	
	\[\begin{tikzcd}
		K &&& Kw &&& {(Kw)\barv = Kv}
		\arrow["w", "\text{{\tiny defectless}}"', from=1-1, to=1-4]
		\arrow["\barv", "\text{{\tiny independent defect}}"', from=1-4, to=1-7]
		\arrow["{v}", curve={height=-24pt}, from=1-1, to=1-7]
	\end{tikzcd}\]

	Let \((L,v)\) be a Galois defect extension of degree \(p\) of \((K,v)\). 
	In particular, this extension is immediate, i.e., \(Lv=Kv\) and \(vL=vK\).
	We want to show that {\((L,v)\supseteq(K,v)\)} has independent defect.
	
	We first argue that $(Kw,\barv) \subseteq (Lw,\barv)$ is a defect extension. We prove that it is immediate of degree $p$. As \((K,w)\) is defectless, we have the fundamental equality 
	\[p=[L:K]=[Lw:Kw](wL:wK).\]
	We argue that we must have $[Lw:Kw] = p$ and $wL = wK$. Suppose not, i.e. that  \(Lw=Kw\): then \(\barv(Lw)=\barv(Kw)\), and thus
	\[
		wL= \faktor{vL}{\barv(Lw)} = \faktor{vK}{\barv(Kw)} = wK,
	\]
	contradicting \((wL:wK)=p\).

	Using \(wL = wK\) and \(vL=vK\), we get
	\[
	\faktor{vK}{\barv(Kw)} = wK = wL = \faktor{vL}{\barv(Lw)} = \faktor{vK}{\barv(Lw)},
	\]
	and thus \(\barv(Lw)=\barv(Kw)\). Moreover, \((Lw)\barv=Lv=Kv=(Kw)\barv\).
	This shows that \( (Kw,\barv) \subseteq (Lw,\barv) \) is an immediate extension of degree \(p\), in particular a defect extension. Note that as $Kw$ is perfect, the extension is separable; moreover, since $K \subseteq L$ is normal, the same holds for $Kw \subseteq Lw$ (for example, by \cite[Proposition 3.2.16]{engler2005valued}), and thus the latter is Galois.
	Since $(Kw,\barv)$ is an independent defect field, the extension \( (Kw,\barv) \subseteq (Lw,\barv) \) has independent defect. By definition, there is a convex subgroup \(H\) of $\barv(Kw)$ such that $\faktor{\barv(Kw)}{H}$ has no minimum positive element, and further
	\[
	\Sigma_{Lw}\coloneqq\left\{ \barv\left( \frac{f-\tau(f)}{f} \right) \mathrel{\Big|} f\in(Lw)^\times \right\} = \{\gamma\in \infvK{\barv(Kw)} \mid \gamma>H\},
	\]
	where \(\tau\in\Gal{Lw}{Kw}\) is a generator, i.e. \({\langle\tau\rangle=\Gal{Lw}{Kw}}\). Since \(\barv(Kw)\subseteq vK\) is a convex subgroup, \(H\) is also a convex subgroup of the bigger group \(vK\).
	Then, the embedding $\barv(Kw) \subseteq vK$ gives rise to an embedding $\faktor{\barv(Kw)}{H} \subseteq \faktor{vK}{H}$ as convex subgroup, thus $\faktor{vK}{H}$ also has no minimum positive element.
	We will now show that 
	\[
	\Sigma_{L}\coloneqq \left\{ v \left( \frac{f-\sigma(f)}{f} \right) \mathrel{\Big|} f\in L^\times \right\} = \{\gamma \in \infvK{vK} \mid \gamma > H\},
	\]
	where \(\sigma\in\Gal{L}{K}\) is such that \(\langle\sigma\rangle=\Gal{L}{K}\). This will prove that the defect in the extension \((L,v)\supseteq(K,v)\) is independent.

	As $\Sigma_L$ is a final segment in $\infvK{vK}$ (by Fact \ref{fact:sigma}), it is enough to show that \(\Sigma_{L}\cap \infvK{\barv(Kw)} = \Sigma_{Lw}\).
	
	Note that since \((K,w)\) is henselian, \(\O_{(L,w)}\) is fixed setwise by the action of \(\Gal{L}{K}\). Thus by \cite[Proposition~3.2.16(3)]{engler2005valued}, \(\phi\) induces an automorphism \(\induced{\phi}\in\Gal{Lw}{Kw}\) given by 
	\[
	\induced{\phi}(\res_w(f))\coloneqq \res_w(\phi(f)),\quad f\in \O_L.
	\]

	We argue that the map
	\[
        \left\lbrace \begin{array}{rcl}
		\Gal{L}{K}&\to&\Gal{Lw}{Kw}\\
		\phi&\mapsto &\induced{\phi}
	\end{array}\right.
	\]
	is a surjective group homomorphism (cf. the proof of \cite[Lemma~5.2.6]{engler2005valued}): indeed, we write $Lw = Kw(\res_w(a))$, and we take any $\rho \in \Gal{Lw}{Kw}$. We let $a_1, \dots a_p$ be the $\Gal{L}{K}$-conjugates of $a$.
 
    For some $i \leq p$, ${\res_w(a_i) = \rho(\res_w(a))}$, so there is $\sigma \in \Gal{L}{K}$ such that 
    \[
        {\res_w(\sigma(a)) = \induced{\sigma}(\res_w(a)) = \rho(\res_w(a))}.
    \]
    Thus $\induced{\sigma}$ and $\rho$ coincide on $Lw$.
	\par Now, since \((\phi\mapsto\induced{\phi})\) is surjective between finite groups with the same order, it is an isomorphism. It follows that it maps generators of \(\Gal{L}{K}\) to generators of \(\Gal{Lw}{Kw}\).

	We can now compare the sets $\Sigma_L$ and $\Sigma_{Lw}$.

	\textbf{Step 1:} $\Sigma_{Lw}\subseteq\Sigma_{L}\cap \infvK{\barv(Kw)}$.
	
	Fix \(\sigma\in\Gal{L}{K}\) such that \(\langle\sigma\rangle=\Gal{L}{K}\). Then, since the set \(\Sigma_{Lw}\) does not depend on the choice of the generator, we can write
	\begin{align*}
	\Sigma_{Lw}&=\left\{ \barv\left( \frac{f-\induced{\sigma}(f)}{f} \right) \mathrel{\Big|} f\in(Lw)^\times \right\}\\
	&=\left\{ \barv\left( \frac{\res_w(f)-\induced{\sigma}(\res_w(f))}{\res_w(f)} \right) \mathrel{\Big|} f\in \O_{(L,w)}^\times \right\}\\
	&=\left\{ \barv\left(\res_w\left( \frac{f-\sigma(f)}{f} \right)\right) \mathrel{\Big|} f\in \O_{(L,w)}^\times \right\}\\
	&= \left\{ v\left( \frac{f-\sigma(f)}{f} \right) \mathrel{\Big|} f\in \O_{(L,w)}^\times \right\}\cap \infvK{\barv(Kw)} \subseteq\Sigma_{L}\cap \infvK{\barv(Kw)},
	\end{align*}
	since \(\barv(\res_w(x))=v(x)\) for $x\in L$ such that \(v(x)\in \barv(Kw)\).

	\textbf{Step 2:} $\Sigma_{Lw} \supseteq \Sigma_L \cap \infvK{\barv(Kw)}$.
	
	Let \(v\Big(\frac{f-\sigma(f)}{f}\Big)\in\Sigma_L\cap \barv(Kw)\), i.e. \(f \in L^\times\), then because \(wL=wK\), there is \(g\in K^\times\) with \(w(f)=w(g)\) and so for \(h\coloneqq \frac{f}{g}\in \O_{(L,w)}^\times\) we have 
	\[
	\frac{f-\sigma(f)}{f}=\frac{h-\sigma(h)}{h},
	\]
	so
	\[
	v\left(\frac{f-\sigma(f)}{f}\right)=v\left(\frac{h-\sigma(h)}{h}\right)\in\Sigma_{Lw}.
	\qedhere
	\]
\end{proof}

\begin{question}
    Let  $(K,v)$ be a henselian valued field with $v=\barv\circ w$, such that both $(K,w)$ and $(Kw,\barv)$ are independent defect fields. Must then $(K,v)$ be an independent defect field?
\end{question}

\subsection{Defining valuations from independent defect} We now have all the tools needed to build a definable henselian valuation out of an (independent) defect extension. This subsection is structured in several steps:
\begin{itemize}
    \item Theorem \ref{lem:ind-val} contains the core of the argument, showing how to deploy an independent defect extension (and its associated convex subgroup) to build a definable valuation.
    \item Corollary \ref{cor:define-val-from-indep-defect} shows how to go from our assumptions in the Main Theorem to the situation where the right defect extension actually appears.
    \item Corollaries \ref{cor:mix-char} and \ref{cor:compositions-indep-defect-definability} then specialize to the mixed characteristic situation, where some extra care is needed.
\end{itemize}
Finally, in Subsection \ref{subsec:defect-elementary}, we will deal with a scenario that mirrors the one of Proposition \ref{definability}, namely with the situation where $(K,v_K)$ is defectless, but some elementary extension $L$ of $Kv_K$ is henselian and $v_L$ admits defect.

The following theorem should be thought of as a more elevated version of Lemma \ref{lem:meta-ind}, where the $\Lring$-definable set $D$ now appears as a subset of the cartesian product $K^p$, which we identify along an interpretation with a degree $p$ Galois defect extension. For details on interpretations of finite field extensions, see for example \cite[Section 3.5]{chatzidakis2005notes}.

We will often use \cite[Theorem 3.10]{jahnke-koenigsmann2015definable} to reduce to the case where our valuation of interest is in $H_1$. We briefly explain how this is done in the following remark.

\begin{remark}\label{reduction-to-h1}
	Suppose that $K$ is not separably closed, and let $v \in H_2(K)$. Then, \cite[Theorem 3.10]{jahnke-koenigsmann2015definable} produces a non-trivial $\emptydefinition$-definable henselian valuation $w$ on $K$. A closer look at the proof of \cite[Theorem 3.10]{jahnke-koenigsmann2015definable} allows us to argue that $w$ is a coarsening of $v$. If $\chara(K) = p > 0$, or $K$ contains a primitive $p$-th root of unity, then by construction $w$ is a (possibly non-proper) coarsening of $v_K$, and thus in particular of $v$. Otherwise, one moves to $L = K(\zeta_p)$ for some primitive $p$-th root of unity $\zeta_p$. On $L$, one finds a coarsening $w'$ of $v_L$ which is $\emptydefinition$-definable and non-trivial. Its restriction $w \coloneqq w'\vert_K$ is then a definable coarsening of $v_K$ (\cite[Theorem 4.4.3]{engler2005valued} -- with the caveat that $H(K)$ means $H_1(K) \cup \{\O_{v_K}\}$ here).
\end{remark}

\begin{theorem}
    \label{lem:ind-val}
	Let $(K,v)$ be a henselian valued field with $\chara{Kv} = p > 0$ such that \((K,v)\) admits a Galois extension of degree $p$ with independent defect.
	If \(\chara(K)=0\), we additionally assume that \(\zeta_p\in K\), where \(\zeta_p\) is a primitive \(p\)-th root of unity. Then $K$ admits a non-trivial definable henselian valuation, coarsening $v$.
\end{theorem}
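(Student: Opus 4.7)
The plan is to adapt the toy argument of Lemma \ref{lem:meta-ind} to the present setting, where the witnessing set now lives inside the Galois extension $L$ rather than in $K$. First, I reduce to the case $v \in H_1(K)$: if $v \in H_2(K)$, Remark \ref{reduction-to-h1} already yields a non-trivial $\emptydefinition$-definable henselian coarsening of $v$. Assuming $v \in H_1(K)$, I fix a primitive element $\alpha \in L$ and parameters $\vec{c}$ encoding both its minimal polynomial over $K$ and a generator $\sigma$ of $\Gal{L}{K}$ (as a $K$-linear map), so that $L$ becomes $\Lring(\vec{c})$-interpretable in $K$ as $K^p$. The independent-defect hypothesis supplies a proper convex subgroup $H \subsetneq vK$, with $vK/H$ admitting no minimum positive element, such that $\Sigma_L = \{\gamma \in \infvK{vK} \mid \gamma > H\}$. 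The coarsening $v_H$ of $v$ corresponding to $H$ is then a non-trivial henselian valuation; the goal becomes proving that $\O_{v_H}$ is $\Lring(\vec{c})$-definable.

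The heart of the argument is a characterization in the style of Lemma \ref{lem:meta-ind}, now phrased via the unique extension $\tilde{v}_H$ of $v_H$ to $L$. Let $D \coloneqq \{(\sigma(f) - f)/f \mid f \in L^\times\} \subseteq L$, which is $\Lring(\vec{c})$-definable in the interpretation, and observe that $\{v(d) \mid d \in D\} = \Sigma_L$. The key claim is that, for every $x \in L$,
\[
x \notin \O_{\tilde{v}_H} \iff \exists u \in \O_{\tilde{v}_H}^\times,\ \exists d \in D \colon x = u/d.
\]
Indeed, if $x \notin \O_{\tilde{v}_H}$ then $v(1/x) > H$, so by independent defect $v(1/x) = v(d)$ for some $d \in D$; setting $u \coloneqq xd$ gives $v(u) = 0 \in H$, hence $u \in \O_{\tilde{v}_H}^\times$ and $x = u/d$. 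Conversely, if $x = u/d$ with $v(u) \in H$ and $v(d) > H$, convexity of $H$ forces $v(x) < -H$, and so $x \notin \O_{\tilde{v}_H}$.

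Finally, I apply Beth's definability theorem as in Remark \ref{rem:beth}. Expanding $\Lring(\vec{c})$ by a unary predicate $P$ and setting $T \coloneqq \mathrm{Th}(K, \O_{v_H})$, any model $(M, P_M) \vDash T$ has $P_M$ henselian with non-separably-closed residue field, witnessed by a specific separable polynomial whose existence transfers via elementary equivalence; hence $P_M \in H_1(M)$. Given two models $(M, P_1), (M, P_2) \vDash T$, $H_1$-comparability lets us assume $P_1 \subseteq P_2$, whence their integral closures in the interpretation $\tilde{P}_1 \subseteq \tilde{P}_2 \subseteq L(M)$ are henselian valuation rings with $\tilde{P}_1^\times \subseteq \tilde{P}_2^\times$. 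If $x \in P_2 \setminus P_1$, the characterization applied to $\tilde{P}_1$ supplies $u \in \tilde{P}_1^\times \subseteq \tilde{P}_2^\times$ and $d \in D(M)$ with $x = u/d$; applying the same characterization to $\tilde{P}_2$ then forces $x \notin \tilde{P}_2$, a contradiction. So $P_1 = P_2$ and Beth's theorem yields the desired $\Lring(\vec{c})$-definability of $\O_{v_H}$. The principal technical difficulty lies in encoding the integral closure $\tilde{P}$ of $P$ in $L$ as a first-order definable object in the expanded language and tracking first-order properties, such as $H_1$-membership and comparability, through elementary equivalence.
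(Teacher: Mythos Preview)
Your proposal is correct and follows essentially the same route as the paper: reduce to $v\in H_1(K)$, interpret $L$ in $K$ with parameters $\vec c$, use the independent-defect set $D$ to characterize the complement of the relevant valuation ring as $\{u/d \mid u\in\O^\times,\ d\in D\}$, and conclude via Beth's definability theorem together with $H_1$-comparability.

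The only difference is cosmetic. The paper places the Beth predicate as a \emph{$p$-ary} predicate on $K$ standing for $f^{-1}(\O_H)\subseteq K^p$, i.e.\ it encodes the valuation ring on $L$ directly through the interpretation; comparability of the two candidate rings is then argued inside $N$ (the interpreted copy of $L$), and only at the end does one restrict to $K$ via the formula $\psi(z,0,\dots,0,\vec c)$. You instead place a \emph{unary} predicate on $K$ for $\O_{v_H}$ itself and recover the ring on $L$ as the integral closure $\tilde P$. This is perfectly legitimate---since $P$ is henselian and $[L(M):M]=p$ is first-order (irreducibility of the fixed minimal polynomial transfers), $\tilde P$ is uniformly definable from $P$ and $\vec c$, e.g.\ via the norm or via the coefficients of $\prod_i(X-\sigma^i(y))$---and the monotonicity $P_1\subseteq P_2\Rightarrow \tilde P_1\subseteq\tilde P_2$ and $\tilde P_1^\times\subseteq\tilde P_2^\times$ are immediate. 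Your variant saves the final restriction step at the cost of this lifting step; neither choice affects the substance of the argument.
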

\begin{proof}
	Note that we can assume that $v \in H_1(K)$, by Remark \ref{reduction-to-h1}. Let $K \subseteq L$ be the degree $p$ Galois extension with independent defect and take $\theta$ such that $L=K(\theta)$. 
	Let $\sigma$ be a generator of the (cyclic) Galois group $\Gal{L}{K}$, and let
	\[ D \coloneqq \left\{\frac{\sigma(f)-f}{f} \mathrel{\Big|} f \in L^\times\right\}.\]
	As the extension has independent defect, there is a (possibly trivial) convex subgroup $H \subsetneq vK$ such that
	\[ \Sigma_{L} = v(D) = \{\alpha \in \infvK{vK} \mid \alpha > H\}.\]	
	Recall that \(L=K(\theta)\) can be interpreted in \(K\) via the $K$-linear isomorphism 

	\[
f\colon\left\lbrace \begin{array}{rcl}
	K^p&\to&L=K(\theta)\\
	(a_0,\ldots,a_{p-1})&\mapsto &\sum_{i=0}^{p-1} a_i\theta^i
\end{array}\right.
\]

	Note that the coefficients of the minimal polynomial of the generator \(\theta\) over \(K\) are needed as parameters to describe the multiplication. 
	Now, the action of \(\sigma\) is definable in \(K\) (via the interpretation) using the coefficients of the change of basis matrix. If \(\chara(K)=p\) then the matrix has integer coefficients, so no additional parameter is needed. If \(\chara(K) = 0\), then we need to use \(\zeta_p\).
	We will denote the parameter tuple needed to define \(D\) in \(K\) by \(\vec{c}\) (i.e. the coefficients of the minimal polynomial of the generator \(\theta\) over \(K\), and \(\zeta_p\) if necessary).
	
	Thus, \(f^{-1}(D) \subseteq K^p\) is definable in $K$, using $\vec{c}$ as parameters; choose an $\Lring(\vec{c})$-formula $\psi(\vec{X},\vec{c})$ so that $f^{-1}(D) = \psi(K^p,\vec{c})$. Denote by $\O_H \subseteq L$ the coarsening of (the unique extension of) $v$ corresponding to $H$. We can see $\O_H$ as a subset $f^{-1}(\O_H)$ of $K^p$.

	\textbf{Claim:} \textit{$f^{-1}(\O_H) \subseteq K^p$ is $\Lring(\vec{c})$-definable.}

	We will use Beth's definability theorem. We work in ${\L \coloneqq \Lring(\vec{c}) \cup \{P(\vec{X})\}}$, where $P(\vec{X})$ is a $p$-ary predicate. Under the bijection $f$ that establishes the interpretation, the predicate will represent the valuation subring $\O_H$ of $K(\theta)$. We now let $(M,\vec{c}',P_1),(M,\vec{c}',P_2) \equiv_\L (K,\vec{c},f^{-1}(\O_H))$. 
	Note that the interpretation given by \(f\) gives rise to an \(\Lring\)-structure \(N\) and a map \(f'\) which is establishing an interpretation of \(N\) in \(M\).
	
	Furthermore, because $(M,\vec{c}',P_1)$ and $(M,\vec{c}',P_2)$ are elementarily equivalent, we have that: 
	\begin{itemize}
		\item \(M\subseteq N\) are fields, and the polynomial with coefficients from $\vec{c}'$ is of degree \(p\) and irreducible over \(M\);
		thus, the field extension \(M\subseteq N\) is algebraic of degree \(p\);
		\item $\O_1 = f'(P_1)$ and $\O_2 = f'(P_2)$ are non-trivial henselian valuation rings on $N$; we denote the corresponding valuations by \(v_1\) and \(v_2\), respectively;
		\item there is $\ell \geq 2$ such that there is a separable polynomial of degree $\ell$ over $Lv_H$ with no root in $Lv_H$; thus, there are separable polynomials of degree $\ell$ over $Nv_1$ and $Nv_2$ with no root, and thus $v_1, v_2 \in H_1(N)$,
		\item for $i =1,2$, $x \notin \O_i$ if and only if there are $u \in \O_i^\times$ and $y \in \psi(M^p,\vec{c}')$ such that $x = u \frac{1}{f'(y)}$.
	\end{itemize}
	Since $\O_1, \O_2 \in H_1(N)$, they must be comparable.
	Assume, for example, that $\O_1 \subseteq \O_2$. 
	Towards a contradiction, assume now that there is $x \in \O_2$ such that $x \notin \O_1$. 
	In particular, this means that there are $u \in \O_1^\times$ and $y \in \psi(M^p,\vec{c}')$ such that $x = u\frac{1}{f'(y)}$. 
	But since $\O_1^\times \subseteq \O_2^\times$, and thus $u \in \O_2^\times$, we get that $x \notin \O_2$, a contradiction. 
	Then $\O_1 = \O_2$, so $P_1 = P_2$ and we can apply Beth's definability. This means that there is an $\Lring$-formula $\psi(\vec{X},\vec{c})$, such that ${f^{-1}(\O_H) = \psi(K^p,\vec{c})}$. \hfill $\blacksquare_{\text{Claim}}$
	
	Now, we can define the (non-trivial) restriction $\O_H \cap K$ as follows: given $z \in K$, we have that
	\[ z \in \O_H \cap K \iff K \vDash \psi(z, \underbrace{0, \dots, 0}_{(p-1)\text{-times}}, \vec{c}).\]
	This exhibits a non-trivial definable henselian valuation ring on $K$.
\end{proof}

We can now use this theorem to define a non-trivial henselian valuation on certain valued fields with defect, but first we need to produce the Galois defect extensions required to exhibit independent defect.

\begin{lemma}
	\label{magic-defect}
	Let $(K,v)$ be a perfect valued field of residue characteristic $p> 0$. If $(K,v)$ is not defectless, then there is a finite extension $K \subseteq K'$ such that $K'$ admits a defect Galois extension of degree $p$.
\end{lemma}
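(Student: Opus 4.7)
The plan is to upgrade a defect extension of \((K,v)\) -- which must be henselian, since the notion of defect is only defined in that setting -- into a Galois defect extension of degree exactly \(p\) over some finite intermediate field \(K'\). I would proceed in three standard ramification-theoretic steps.

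First, I would fix any finite extension \(L/K\) with defect. Since \(K\) is perfect, \(L/K\) is separable, so its normal closure \(\widetilde L/K\) is a finite Galois extension. The fundamental equality \eqref{defect-eqn}, applied to each of \(K\subseteq L\), \(K\subseteq \widetilde L\), and \(L\subseteq\widetilde L\), yields the multiplicativity of defect in towers:
\[
d(\widetilde L/K) \;=\; d(\widetilde L/L)\cdot d(L/K) \;\geq\; d(L/K) \;>\; 1,
\]
so \(\widetilde L/K\) is itself a Galois defect extension.

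Next, I would use classical ramification theory to localise this defect inside a \(p\)-power extension. Writing \(K^r\) for the ramification subfield of \(\widetilde L/K\) (the decomposition field is \(K\) itself, by henselianity), the extension \(K^r/K\) is tame -- hence defectless -- while \(\widetilde L/K^r\) is Galois with Galois group a finite \(p\)-group. Multiplicativity then forces \(d(\widetilde L/K^r)=d(\widetilde L/K)>1\), concentrating all of the defect of \(\widetilde L/K\) inside the \(p\)-extension \(\widetilde L/K^r\).

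Finally, since every finite \(p\)-group admits a chief series with cyclic quotients of order \(p\), Galois correspondence produces a tower
\[
K^r = F_0 \subseteq F_1 \subseteq \cdots \subseteq F_n = \widetilde L
\]
in which each step \(F_{i+1}/F_i\) is Galois of degree \(p\). Multiplicativity again forces \(\prod_i d(F_{i+1}/F_i) = d(\widetilde L/K^r) > 1\), so at least one intermediate step \(F_{j+1}/F_j\) must carry defect; setting \(K':=F_j\) produces the required finite extension of \(K\) together with its degree-\(p\) Galois defect extension. No real obstacle arises: the one conceptual point worth flagging is that the normal closure of a defect extension remains defect, and this is an immediate consequence of the multiplicativity of defect invoked throughout.
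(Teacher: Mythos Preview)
Your proposal is correct and follows the same overall strategy as the paper: pass to the normal closure to obtain a finite Galois extension with defect, locate an intermediate field over which the top part is a Galois \(p\)-extension carrying all the defect, then filter that \(p\)-extension by a chain of degree-\(p\) Galois steps and pick out one with defect via multiplicativity.

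The only genuine difference is in how the intermediate field is produced. The paper takes the fixed field \(L'=N^H\) of a \(p\)-Sylow subgroup \(H\leq\Gal{N}{K}\); then \([L':K]\) is prime to \(p\), hence \(L'/K\) is automatically defectless (the defect being a \(p\)-power), and \(N/L'\) is a Galois \(p\)-extension. You instead invoke ramification theory and take the ramification subfield \(K^r\), so that \(K^r/K\) is tame (hence defectless) and \(\widetilde L/K^r\) has \(p\)-group Galois group. Your route is a little more structural and pinpoints the wild part directly; the paper's route is marginally more elementary, needing only Sylow and the shape of the defect rather than the ramification filtration. Either way the final chain-of-degree-\(p\)-steps argument is identical.
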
	
\begin{proof}
Take a defect extension $K \subseteq L$. In particular, $p^n \mid [L:K]$ for some $n \geq 1$. We let $N$ be the normal hull of $L$; note that $p^n \mid [N:K]$ and $K \subseteq N$ is Galois. Consider now $H \subseteq \Gal{N}{K}$ to be a $p$-Sylow subgroup. Then $L' \coloneqq N^H$ is such that $L' \subseteq N$ is Galois and it is a tower $L' = L_0 \subseteq L_1 \subseteq \cdots \subseteq L_n = N$ of normal degree $p$ extensions. Moreover, since $K \subseteq L'$ has degree prime to $p$, it is defectless, hence $L' \subseteq N$ is defect. In particular, there is some $\ell \leq n$ such that $L_\ell \subseteq L_{\ell+1}$ is a defect Galois extension of degree $p$, so we take $K' = L_{\ell}$.
\end{proof}	

\begin{remark}
	\label{restriction-interpretation}
	Note that if $F \subseteq E$ is a finite separable field extension of degree $n$, we can interpret $E$ inside of $F$ using the coefficients $\vec{c}$ of the minimal polynomial of a primitive element of the extension. Suppose we had a definable valuation $v$ on $E$, say defined by $\varphi(X,\vec{d})$. Using the interpretation we can define $\O_v$ (as a subset of $F^n$) using some other formula $\widetilde{\varphi}(\vec{X},\vec{d'})$, where now $\vec{d'}$ is given by the coordinates of the elements of $\vec{d}$ in a fixed $F$-basis of $F^n$, together with $\vec{c}$. The restriction $v\vert_F$ can then be defined using the formula 
	\[
		\psi(X,\vec{\rho}) \coloneqq \widetilde{\varphi}(X, \underbrace{0, \dots, 0}_{(n-1)\text{-times}}, \vec{\rho}).
	\]
\end{remark}

\begin{corollary}
	\label{cor:define-val-from-indep-defect}
	Suppose $(K,v)$ is a henselian valued field with $\chara{Kv} = p > 0$ such that \((K,v)\) is not defectless, and such that either
	\begin{enumerate}
		\item $K$ is perfect, if $\chara{K} = p > 0$, or
		\item $\O_v/p$ is semi-perfect, if $\chara{K} = 0$.
	\end{enumerate}
	Then $K$ admits a non-trivial definable henselian valuation, coarsening $v$.
\end{corollary}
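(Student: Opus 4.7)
The plan is to reduce to an application of Theorem \ref{lem:ind-val} after passing to a finite extension of $K$, and then descend the resulting definable henselian valuation back to $K$ via Remark \ref{restriction-interpretation}. The bridging fact I invoke from the literature on independent defect is that our perfect / semi-perfect hypothesis forces $(K,v)$ and its finite extensions to be independent defect fields, so every Galois defect extension of degree $p$ automatically has independent defect in the sense of Definition \ref{ind-def}.

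Concretely, in the mixed characteristic case, if $\zeta_p \notin K$ I first replace $K$ by $K(\zeta_p)$: this extension has degree dividing $p-1$, hence is tame and defectless, and it preserves both non-defectlessness of $(K,v)$ and semi-perfectness of $\O_v/p$. I may thus assume $\zeta_p \in K$ whenever $\chara(K) = 0$. Since $(K,v)$ is perfect (automatic in characteristic zero, by hypothesis in characteristic $p$) and not defectless, Lemma \ref{magic-defect} furnishes a finite extension $K \subseteq K'$ such that $K'$ admits a Galois defect extension of degree $p$. The perfect / semi-perfect hypothesis transfers to $(K',v)$, so by the bridging fact this Galois defect extension has independent defect, and Theorem \ref{lem:ind-val} then produces a non-trivial definable henselian valuation $u$ on $K'$ coarsening $v$.

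To descend $u$ to $K$ I interpret $K'$ inside $K$ via a primitive element and its minimal polynomial, as in Remark \ref{restriction-interpretation}: the $\Lring$-formula defining $\O_u$ on $K'$ translates into an $\Lring$-formula defining $\O_{u|_K}$ on $K$. The restriction $u|_K$ is henselian (as the restriction of a henselian valuation along an algebraic extension), non-trivial (since $K'/K$ is finite), and coarsens $v|_K = v$, which is exactly what is claimed.

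The main obstacle is the bridging fact itself: while the reduction and the descent are essentially bookkeeping with tools already in place (Lemma \ref{magic-defect}, Theorem \ref{lem:ind-val}, Remark \ref{restriction-interpretation}), identifying the precise result from Kuhlmann's structural analysis of Artin--Schreier and Kummer defect extensions that forces perfect / semi-perfect valued fields to be independent defect fields, and checking that this property is stable under the finite extension produced by Lemma \ref{magic-defect}, is where the substantive work lies.
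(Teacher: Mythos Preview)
Your proposal is essentially the paper's own proof: adjoin $\zeta_p$, invoke Lemma~\ref{magic-defect} to find a finite extension carrying a Galois degree-$p$ defect extension, argue that this extension has independent defect, apply Theorem~\ref{lem:ind-val}, and descend via Remark~\ref{restriction-interpretation}. The ``bridging fact'' you single out is precisely what the paper supplies via the \emph{roughly deeply ramified} framework of Kuhlmann--Rzepka: perfect fields of characteristic $p$, and roughly deeply ramified fields in mixed characteristic, are independent defect fields \cite[Theorem~1.10]{kuhlmann2023valuation}, and rough deep ramification is stable under finite extensions \cite[Theorem~1.8]{kuhlmann2023valuation}.

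One small point you gloss over: in mixed characteristic, semi-perfectness of $\O_v/p$ alone is not the full definition of roughly deeply ramified---one also needs $v$ to be not finitely ramified. The paper handles this by first disposing of the finitely ramified case separately (a finitely ramified mixed-characteristic henselian valuation already yields a definable non-trivial henselian valuation), and only then invoking \cite[Lemma~4.1]{kuhlmann2023valuation} to conclude that the remaining case is roughly deeply ramified. Your write-up should include this reduction, since otherwise the appeal to \cite[Theorem~1.10]{kuhlmann2023valuation} is not justified.
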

\begin{proof}
	We may assume that $v \in H_1(K)$, by Remark \ref{reduction-to-h1}.
	\par In the case where $\chara{K} = 0$, we make the following two observations:
	\begin{itemize}
		\item since $K$ always admits a definable valuation if $v$ is finitely ramified, we may assume that it is infinitely ramified,
		\item the assumptions on $(K,v)$ imply that it is a \emph{roughly deeply ramified} valued field in the sense of \cite[p. 2696]{kuhlmann2023valuation} (note that by \cite[Lemma 4.1]{kuhlmann2023valuation}, we only need to check that $\O_{v}/p$ is semi-perfect, without looking at the completion of $K$). Thus, by \cite[Theorem~1.8]{kuhlmann2023valuation} any finite extension of it is also roughly deeply ramified.
	\end{itemize}
	If $\chara{K} = 0$, let $K_0 = K(\zeta_p)$, where $\zeta_p$ is some primitive $p$th root of unity. Otherwise, take $K_0 = K$. By Lemma \ref{magic-defect}, we have a finite extension $K_0 \subseteq K_1$ which admits a Galois defect extension of degree $p$. Since \(K\subseteq K_1\) is a finite separable field extension, we can find \(b\in K_1\) such that \(K_1=K(b)\).
	
	The field \(K_1\) admits a Galois defect extension of degree $p$, i.e there is $c \in K_1$ which has no Artin-Schreier root in \(K\) if $\chara{K} = p$ (resp. no $p$th root, if $\chara{K} = 0$), and the extension $(K_1,v) \subseteq (K_1(\theta),v)$, where $\theta^p-\theta = c$ (resp. $\theta^p = c$), is an immediate defect extension. Denote by $L = K_1(\theta)=K(b,\theta)$. Then, we have to distinguish two cases: 
	\begin{itemize}
		\item if $\chara{K} = p$, then $K_1$ is a perfect field,
		\item if $\chara{K} = 0$, then $K_1$ is a finite extension of a roughly deeply ramified field, and thus it is roughly deeply ramified itself.
	\end{itemize}
	In both cases, \(K_1\) is an independent defect field (see \cite[Theorem~1.10]{kuhlmann2023valuation}), and thus the extension $(K_1,v) \subseteq (L,v)$ is a degree $p$ defect extension with independent defect. By Theorem~\ref{lem:ind-val}, then, $K_1$ admits a non-trivial definable henselian valuation $v_H$, with valuation ring defined by $\varphi(X,\vec{c})$.  As \(K_1=K(b)\) can be interpreted in \(K\), by Remark \ref{restriction-interpretation} the restriction of $v_H$ to $K$ is then \(\Lring\)-definable with parameters (namely, the coefficients of the minimal polynomial of $b$, and the coordinates of $\vec{c}$ in a chosen $K$-basis of $K_1$).
\end{proof}

As we will later have to deal with definable valuations in some ultrapower of our field $K$, we immediately apply results of Anscombe and Jahnke from \cite{anscombe-jahnke2018henselianity} to get rid of parameters in our definition.
\begin{remark}
	Note that if $(K,v)$ is a henselian valued field of mixed characteristic, with $v \in H_1(K)$, then $v_K$ is also of mixed characteristic.
\end{remark}
\begin{corollary}
	\label{cor:mix-char}
	Suppose $(K,v)$ is a henselian valued field of mixed characteristic, such that \((K,v)\) is not defectless and $\O_v/p$ is semi-perfect. Then $K$ admits an $\emptydefinition$-definable non-trivial henselian valuation.
\end{corollary}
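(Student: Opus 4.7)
The plan is to combine Corollary~\ref{cor:define-val-from-indep-defect} with the parameter-elimination results of Anscombe and Jahnke in~\cite{anscombe-jahnke2018henselianity}. The first step is immediate: the hypotheses of the present corollary---henselianity, mixed characteristic (so $\chara K = 0$ and $\chara Kv = p>0$), failure of defectlessness, and semi-perfectness of $\O_v/p$---are exactly the hypotheses required to apply Corollary~\ref{cor:define-val-from-indep-defect} in its mixed-characteristic incarnation. Doing so yields a non-trivial henselian valuation on $K$ which is $\Lring$-definable with parameters and coarsens $v$.

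The second step is to eliminate those parameters. Up to replacing $v$ by a refinement using Remark~\ref{reduction-to-h1} we may assume $v \in H_1(K)$, and the remark preceding the statement then guarantees that the canonical henselian valuation $v_K$ is itself of mixed characteristic. In this configuration, the main theorems of~\cite{anscombe-jahnke2018henselianity} upgrade any parametrically definable non-trivial henselian valuation on $K$ to an $\emptydefinition$-definable one, which is what we need to conclude. The reduction to $v \in H_1(K)$ is harmless: if instead $v_K \in H_2(K)$, i.e.\ $Kv_K$ is separably closed, then \cite[Theorem~3.10]{jahnke-koenigsmann2015definable} already produces an $\emptydefinition$-definable non-trivial henselian valuation directly, as recalled in Remark~\ref{reduction-to-h1}.

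The only genuine step requiring care is the second: one must pin down the precise statement from~\cite{anscombe-jahnke2018henselianity} that covers the situation at hand, and check that mixed characteristic of $v_K$ together with the parametrically definable non-trivial henselian valuation already produced is enough to invoke it. Modulo that identification, the proof amounts to two sentences stitching together the previously established corollary and the cited parameter-elimination theorem.
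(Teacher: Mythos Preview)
Your proposal is correct and follows essentially the same route as the paper: reduce to $v \in H_1(K)$ via Remark~\ref{reduction-to-h1}, observe that $v_K$ is then of mixed characteristic, apply Corollary~\ref{cor:define-val-from-indep-defect} to get a parametrically definable non-trivial henselian valuation, and then invoke \cite[Theorem~1.1.(B)]{anscombe-jahnke2018henselianity} to eliminate parameters. Two small points: your phrase ``replacing $v$ by a refinement'' mischaracterizes the reduction (Remark~\ref{reduction-to-h1} does not refine $v$; it simply disposes of the case $v \in H_2(K)$ outright, as you correctly explain later), and the precise citation you flagged as needing identification is exactly \cite[Theorem~1.1.(B)]{anscombe-jahnke2018henselianity}.
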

\begin{proof}
	We may assume that $v \in H_1(K)$, by Remark \ref{reduction-to-h1}. Now, $v_K$ is also of mixed characteristic, thus Corollary \ref{cor:define-val-from-indep-defect} yields the existence of a definable non-trivial henselian valuation, and by \cite[Theorem~1.1.(B)]{anscombe-jahnke2018henselianity} we can conclude that there is an \(\emptydefinition\)-definable non-trivial henselian one.
\end{proof}

Later, in the proof of Proposition \ref{prop:defectless->definability-mixed}, we will find ourselves in the situation where we deal with a defect refinement $v_1$ of a valuation $v_K^{\mathcal U}$, where $K^{\mathcal U}v_K^{\mathcal{U}}$ is perfect.

\begin{lemma}   \label{cor:compositions-indep-defect-definability}
	Suppose \((K,v)\) is a henselian valued field of mixed characteristic, such that \((K,v)\) is not defectless, and there is a coarsening \(w\) of \(v\) such that \((K,w)\) is defectless and \(Kw\) is perfect of characteristic $p>0$.
	Then \(K\) admits an \(\emptydefinition\)-definable non-trivial henselian valuation.
\end{lemma}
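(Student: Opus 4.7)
The plan is to reduce to the setting of Theorem \ref{lem:ind-val} by producing, over a suitable finite extension of $K$ containing a primitive $p$-th root of unity, a Galois defect extension of degree $p$ with independent defect. First, by Remark \ref{reduction-to-h1} I may assume $v \in H_1(K)$. Writing $v = \bar{v} \circ w$, I would observe that since $(K,w)$ is defectless but $(K,v)$ is not, the induced valuation $\bar{v}$ on $Kw$ must itself have defect (because a composition of defectless valuations is defectless, \cite[Lemma 2.9]{anscombe2024characterizing}). Since $Kw$ is perfect of characteristic $p > 0$, it is in particular an independent defect field by \cite[Theorem 1.10]{kuhlmann2023valuation}, so Lemma \ref{lem:decomposition-indep-defect} lets me conclude that $(K, v)$ itself is an independent defect field.

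Next, I would pass to $K_0 = K(\zeta_p)$; as $\chara K = 0$, both $K$ and $K_0$ are perfect, and since $[K_0 : K]$ divides $p-1$, the extension $(K, v) \subseteq (K_0, v)$ is defectless, which forces $(K_0, v)$ to also be not defectless (else $(K,v)$ would be defectless by composition). Lemma \ref{magic-defect} then yields a finite extension $K_0 \subseteq K_1$ admitting a Galois defect extension of degree $p$. The key verification at this point is that $(K_1, v)$ is still an independent defect field: the finiteness of $K \subseteq K_1$ preserves defectlessness of $(\cdot, w)$ and perfectness of the residue field (finite extensions of perfect fields are perfect in positive characteristic), so I can rerun the argument of the first paragraph over $K_1$, again via \cite[Theorem 1.10]{kuhlmann2023valuation} followed by Lemma \ref{lem:decomposition-indep-defect}.

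Since $\zeta_p \in K_1$, by the definition of independent defect field every Galois defect extension of $(K_1, v)$ of degree $p$ has independent defect; in particular, the extension produced by Lemma \ref{magic-defect} does. Theorem \ref{lem:ind-val} then gives a non-trivial definable henselian valuation on $K_1$, which I would restrict to $K$ along the finite interpretation of $K_1$ in $K$ (as in Remark \ref{restriction-interpretation}) to obtain a definable non-trivial henselian valuation on $K$. Finally, exactly as in the proof of Corollary \ref{cor:mix-char}, I would invoke \cite[Theorem 1.1.(B)]{anscombe-jahnke2018henselianity} to eliminate the parameters and land on an $\emptydefinition$-definable valuation. The most delicate point is really the propagation of the independent defect property through the finite extension $K_1$; everything else is an assembly of tools already developed in this section.
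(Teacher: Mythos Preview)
Your proposal is correct and follows essentially the same route as the paper: reduce to $v\in H_1(K)$, pass to a finite extension $K_1\supseteq K(\zeta_p)$ admitting a degree-$p$ Galois defect extension via Lemma~\ref{magic-defect}, verify $(K_1,v)$ is an independent defect field by applying Lemma~\ref{lem:decomposition-indep-defect} to the decomposition through $w$ (using that $(K_1,w)$ stays defectless and $K_1w$ stays perfect), then invoke Theorem~\ref{lem:ind-val}, restrict down, and eliminate parameters via \cite[Theorem~1.1.(B)]{anscombe-jahnke2018henselianity}. Your first paragraph (showing $(K,v)$ is an independent defect field and that $\bar v$ has defect) is harmless but unused, and your explicit check that $(K_0,v)$ is not defectless before calling Lemma~\ref{magic-defect} is a detail the paper leaves implicit.
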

\begin{proof}
	We may assume that $v \in H_1(K)$: otherwise, we can use \cite[Theorem~3.10]{jahnke-koenigsmann2015definable} to find an $\emptydefinition$-definable non-trivial henselian valuation. Using Lemma \ref{magic-defect}, there is a finite extension \(K(\zeta_p) \subseteq K_1\) such that \((K_1,v)\) admits a defect extension of degree \(p\), and we find \(b\in K_1\) such that \(K_1=K(b)\). 
	
	We now show that \((K_1,v)\) is an independent defect field. For this we consider the decomposition of \((K_1,v)\) with respect to the coarsening \((K_1,w)\) and we denote the induced valuation on \(K_1w\) by \(\barv\):
	\[\begin{tikzcd}
		K_1 && K_1w && {(K_1w)\barv = K_1v}
		\arrow["w", from=1-1, to=1-3]
		\arrow["\barv", from=1-3, to=1-5]
		\arrow["{v}", curve={height=-24pt}, from=1-1, to=1-5]
	\end{tikzcd}\]
	Note that \((K_1,w)\) is defectless, because it is a finite extension of \((K,w)\) and \(K_1w\) is perfect because it is a finite extension of \(Kw\). From Lemma \ref{lem:decomposition-indep-defect} it follows that also \((K_1,v)\) is an independent defect field.
	
	By Theorem~\ref{lem:ind-val}, then, $K_1$ admits a non-trivial definable henselian valuation $v_H$, with valuation ring defined by $\varphi(X,\vec{c})$. As \(K_1=K(b)\) can be interpreted in \(K\), by Remark \ref{restriction-interpretation} the restriction of $v_H$ to $K$ is then \(\Lring\)-definable with parameters (namely, the coefficients of the minimal polynomial of $b$, and the coordinates of $\vec{c}$ in a chosen $K$-basis of $K_1$).

    By our assumptions, $v_K$ is also of mixed characteristic, thus \cite[Theorem~1.1.(B)]{anscombe-jahnke2018henselianity} yields the existence of an \(\emptydefinition\)-definable non-trivial henselian valuation.
\end{proof}

\subsection{Defect in elementary extensions of $Kv_K$}
\label{subsec:defect-elementary}
The following proposition is a direct result of Lemma \ref{cor:compositions-indep-defect-definability}, while we will need some more work for positive characteristic.
\begin{proposition}
	\label{prop:defectless->definability-mixed}
	Let \(K\) be a henselian field such that \((K,v_K)\) has mixed characteristic and \(Kv_K\) is perfect. If
	\begin{enumerate}
		\item there is \(L\equiv Kv_K\) such that \((L,v_L)\) is not defectless, \emph{and}
		\item \((K,v_K)\) is defectless,
	\end{enumerate}
	then \(K\) admits an \(\emptydefinition\)-definable non-trivial henselian valuation.
\end{proposition}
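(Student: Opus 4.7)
The strategy is to mirror the ultrapower argument of Proposition~\ref{definability}, replacing the role of divisibility of the value group with defectlessness, and closing the argument with Lemma~\ref{cor:compositions-indep-defect-definability} in place of Theorem~\ref{jk}. First I will establish that \(Kv_K\) is not separably closed: otherwise \(L\equiv Kv_K\) would be separably closed as well (non-separable-closedness being witnessed by an irreducible polynomial of some fixed degree and hence preserved by elementary equivalence), forcing \(v_L\) to be trivial and contradicting hypothesis (1). This gives \(v_K\in H_1(K)\), and by Lemma~\ref{henselian-sep-closed}, \(Kv_K\) admits no non-trivial henselian valuation whatsoever. I will then show \(v_L\in H_1(L)\) by contradiction: if \(v_L\in H_2(L)\), i.e.\ \(Lv_L\) is separably closed, then Theorem~\ref{jk}(1) produces an \(\emptydefinition\)-definable non-trivial henselian valuation on \(L\), which by elementary equivalence in \(\Lring\) pulls back to \(Kv_K\), contradiction.

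Next I will invoke the Keisler-Shelah isomorphism theorem to find an index set \(I\) and an ultrafilter \(\mathcal U\) on \(I\) with \(L^{\mathcal U}\cong (Kv_K)^{\mathcal U}\), and form the \(\Lval\)-ultrapowers \((K^{\mathcal U},v_K^{\mathcal U})\coloneqq(K,v_K)^{\mathcal U}\) and \((L^{\mathcal U},v_L^{\mathcal U})\coloneqq(L,v_L)^{\mathcal U}\), so that \(K^{\mathcal U} v_K^{\mathcal U}\cong L^{\mathcal U}\). Elementary equivalence preserves \(v_K^{\mathcal U}\in H_1(K^{\mathcal U})\) and \(v_L^{\mathcal U}\in H_1(L^{\mathcal U})\), so by Remark~\ref{lem:correspondence-ref-res}, \(v_{K^{\mathcal U}}=v_{L^{\mathcal U}}\circ v_K^{\mathcal U}\). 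The key construction is the auxiliary henselian refinement \(v_1\coloneqq v_L^{\mathcal U}\circ v_K^{\mathcal U}\) of \(v_K^{\mathcal U}\) on \(K^{\mathcal U}\); it is of mixed characteristic, since \(K^{\mathcal U}\) has characteristic zero while \(K^{\mathcal U} v_1=L^{\mathcal U} v_L^{\mathcal U}\) has characteristic \(p\).

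The plan is then to apply Lemma~\ref{cor:compositions-indep-defect-definability} to \((K^{\mathcal U},v_1)\) with the coarsening \(w=v_K^{\mathcal U}\). The residue field \(K^{\mathcal U} w= L^{\mathcal U}\) is perfect of characteristic \(p\) by transfer, and \((K^{\mathcal U},w)\) is defectless by transfer from hypothesis (2). For the non-defectlessness of \((K^{\mathcal U},v_1)\): the specific polynomial witnessing defect in \((L,v_L)\) continues to witness defect in \((L^{\mathcal U},v_L^{\mathcal U})\), and by defect multiplicativity along \(v_1=v_L^{\mathcal U}\circ v_K^{\mathcal U}\) (combined with defectlessness of \(v_K^{\mathcal U}\)), this lifts to a defect extension for \(v_1\) over \(K^{\mathcal U}\). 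The lemma then yields an \(\emptydefinition\)-definable non-trivial henselian valuation ring on \(K^{\mathcal U}\), and since \(K\preceq K^{\mathcal U}\) as \(\Lring\)-structures, the same formula defines such a valuation on \(K\).

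The main obstacle is justifying that defectlessness (and the existence of a defect extension of a specific degree) is preserved under elementary equivalence in \(\Lval\): for each \(n\) one needs that the existence of an irreducible polynomial of degree \(n\) whose extension is a defect extension is a single first-order \(\Lval\)-sentence, so that ``defectless'' is a countable conjunction of first-order conditions preserved elementarily. This in turn rests on extracting ramification index and residue degree in a first-order way from an irreducible polynomial over a henselian base — a standard but delicate point that forms the technical heart of the transfer step.
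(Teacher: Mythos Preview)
Your proposal is correct and follows essentially the same route as the paper: form the \(\Lval\)-ultrapowers via Keisler--Shelah, compose \(v_L^{\mathcal U}\) with \(v_K^{\mathcal U}\) to obtain a mixed-characteristic defect valuation \(v_1\) on \(K^{\mathcal U}\) with defectless coarsening \(v_K^{\mathcal U}\) of perfect residue, invoke Lemma~\ref{cor:compositions-indep-defect-definability}, and transfer the resulting \(\emptydefinition\)-definable valuation down to \(K\). Your preliminary arguments that \(Kv_K\) is not separably closed, that \(v_L\in H_1(L)\), and the identification of \(v_{K^{\mathcal U}}\) are not actually needed here (Lemma~\ref{cor:compositions-indep-defect-definability} carries no \(H_1\) hypothesis), and the paper simply omits them, likewise taking the first-orderness of defect(lessness) for granted.
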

\begin{proof}
	Choose \(L\) such that \(L\equiv Kv_K\) and \((L,v_L)\) is not defectless. By the Keisler-Shelah isomorphism theorem \cite[Theorem~2.5.36]{marker2006model}, take an index set $I$ and an ultrafilter $\mathcal U$ on $I$ such that $(Kv_K)^{\mathcal U} \cong L^{\mathcal U}$. 
	Consider the ultrapowers $(K^{\mathcal U},v_K^{\mathcal U}) \coloneqq (K,v_K)^{\mathcal U}$ and $(L^{\mathcal U},v_L^{\mathcal U}) \coloneqq (L,v_L)^{\mathcal U}$ in \(\Lval\).
	Since $v_L^{\mathcal U}$ is a valuation with defect, while $v_K^{\mathcal U}$ is defectless, the composition $v_1$ is a non-trivial henselian valuation on $K^{\mathcal U}$ with defect. The valuation \(v_1\) is not defectless and of mixed characteristic, since \(v_K^{\mathcal{U}}\) is, and its coarsening \(v_K^{\mathcal{U}}\) is defectless and has perfect residue field \(K^{\mathcal{U}}v_K^{\mathcal{U}}=L^{\mathcal{U}}\).
	Thus by Lemma~\ref{cor:compositions-indep-defect-definability}, there is an \(\emptydefinition\)-definable valuation on \(K^{\mathcal{U}}\).
	By elementary equivalence, we can find an \(\emptydefinition\)-definable henselian valuation on $K$.
\end{proof}

In the case of positive characteristic, eliminating the parameters from the definition of the henselian valuation can be tricky; indeed, a theorem along the lines of \cite[Theorem~1.1.(B)]{anscombe-jahnke2018henselianity} will not be true in general in positive characteristic. However, in our setting (particularly because we can assume $v_KK$ to be divisible) we manage to get an analogous result, where we take a valuation defined in an elementary extension $K \preceq K^*$ (with parameters possibly in $K^*$) and produce a valuation defined with parameters in $K$, thus allowing us to \textquote{push it down}.
\begin{fact}[cf. {\cite[Proposition~2.4]{jahnke2019does}}]
    \label{magic-def}
	Let $(K,v)$ be a henselian valued field with non-separably closed nor real closed residue field. Then $v$ has an $\Lring$-definable refinement.
\end{fact}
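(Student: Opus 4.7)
The plan is to first produce an \(\Lring\)-definable non-trivial henselian valuation on the residue field \(Kv\), and then lift the defining formula through the residue map of \(v\) to obtain a refinement of \(v\) on \(K\) that is still \(\Lring\)-definable.

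For the first step, since \(Kv\) is neither separably closed nor real closed, there is a prime \(q\) and an \(\Lring\)-formula \(\varphi_q(x)\) defining the valuation ring of a non-trivial \(q\)-henselian valuation \(\bar u\) on \(Kv\); this follows from Koenigsmann's machinery of \(\emptydefinition\)-definable \(q\)-henselian valuations, possibly after adjoining a primitive \(q\)-th root of unity to \(Kv\) and descending via Galois theory. The exclusion of real closed residue fields is what makes the prime \(q = 2\) accessible when it is required, and together with the non-separably-closed hypothesis it ensures that the input to Koenigsmann's construction is available at some suitable prime.

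For the second step, I would verify that the same formula \(\varphi_q\), evaluated in \(K\), cuts out \(\res_v^{-1}(\O_{\bar u})\), which is the valuation ring of the composition \(\bar u \circ v\) and is therefore a proper refinement of \(v\). Since \(\varphi_q\) is built from arithmetic conditions on \(q\)-th powers, \(v\)-henselianity implies that on \(\O_v\) its truth is controlled by the corresponding condition on residues in \(Kv\), while on \(K \setminus \O_v\) the formula fails because negative \(v\)-values obstruct the required \(q\)-th power relations. This yields \(\bar u \circ v\) as an \(\Lring\)-definable refinement of \(v\) on \(K\).

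The main obstacle is making this lifting argument precise in mixed characteristic: when \(\chara(K) = 0\) and \(\chara(Kv) = q > 0\), Koenigsmann's formula at the prime \(q\) interacts delicately with the residue map, and one may have to switch primes or perform a careful descent from a finite extension \(K(\zeta_q)\) in order to preserve the \(\Lring\)-definability of the refinement. A secondary subtlety is that comparability (via F.~K.~Schmidt) of any produced henselian valuation with \(v\) must be leveraged to ensure we actually obtain a refinement rather than an unrelated or coarser valuation.
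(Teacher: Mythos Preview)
There is a genuine gap in your first step: the residue field $Kv$ need not carry any non-trivial valuation at all. For instance, $Kv$ could be a finite field or a global field---both neither separably closed nor real closed---in which case there is no non-trivial $q$-henselian $\bar u$ on $Kv$ to speak of, and the composition $\bar u \circ v$ degenerates to $v$ itself without your having established that $v$ is definable. Your heuristic in step~2 (``negative $v$-values obstruct the required $q$-th power relations'') is really the bare assertion that Koenigsmann's formula, evaluated on $K$, lands inside $\O_v$; that is the actual content needed, and it does not follow from anything you did on $Kv$.

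The paper does not prove this Fact but defers to \cite[Proposition~2.4]{jahnke2019does}. The argument there bypasses $Kv$ entirely: one selects a prime $q$ so that $Kv$ admits a Galois extension of degree $q$ (this is precisely where ``not separably closed nor real closed'' enters), passes if necessary to $K(\zeta_q)$, and observes that $K$ is $q$-henselian because it is henselian. Koenigsmann's uniform formula then defines the canonical $q$-henselian valuation directly on $K$; since $Kv$ is not $q$-closed, $v$ sits among the $q$-henselian valuations with non-$q$-closed residue field, and the canonical $q$-henselian valuation refines every such valuation. Your comparability remark is therefore the heart of the matter rather than a secondary subtlety; the detour through a putative valuation on $Kv$ is both unnecessary and, as you have stated it, unjustified.
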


\begin{proposition}
	\label{prop:defectless->definability-positive}
	Let \(K\) be a perfect henselian field of positive characteristic. If
	\begin{enumerate}
		\item $v_KK$ is divisible, \emph{and}
		\item there is \(L\equiv Kv_K\) such that \((L,v_L)\) is not defectless, \emph{and}
		\item \((K,v_K)\) is defectless,
	\end{enumerate}
	then \(K\) admits a definable non-trivial henselian valuation.
\end{proposition}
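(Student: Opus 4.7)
I would pattern the proof on that of Proposition~\ref{prop:defectless->definability-mixed}, with one key substitution at the final step where parameter-elimination becomes problematic in positive characteristic.

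First, following the blueprint of Proposition~\ref{prop:defectless->definability-mixed}, pick $L \equiv Kv_K$ with $(L, v_L)$ not defectless. By the Keisler--Shelah isomorphism theorem, there is an ultrafilter $\mathcal{U}$ on some index set so that $L^{\mathcal{U}} \cong (Kv_K)^{\mathcal{U}}$. Take the ultrapowers $(K^{\mathcal{U}}, v_K^{\mathcal{U}}) \coloneqq (K,v_K)^{\mathcal{U}}$ and $(L^{\mathcal{U}}, v_L^{\mathcal{U}}) \coloneqq (L,v_L)^{\mathcal{U}}$ in $\Lval$, identifying $L^{\mathcal{U}}$ with $K^{\mathcal{U}}v_K^{\mathcal{U}}$, and form the composition $v_1 \coloneqq v_L^{\mathcal{U}} \circ v_K^{\mathcal{U}}$ on $K^{\mathcal{U}}$. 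Since defectlessness is elementary in $\Lval$, $(K^{\mathcal{U}}, v_K^{\mathcal{U}})$ is defectless while $(L^{\mathcal{U}}, v_L^{\mathcal{U}})$ has defect; hence $v_1$ itself has defect (by the contrapositive of \cite[Lemma~2.9]{anscombe2024characterizing}). As $K^{\mathcal{U}}$ is perfect and $v_1$ has positive residue characteristic $p$, Corollary~\ref{cor:define-val-from-indep-defect} applied to $(K^{\mathcal{U}}, v_1)$ yields a (parameter-)definable non-trivial henselian valuation on $K^{\mathcal{U}}$.

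The main obstacle is transferring this definability back down to $K$. In mixed characteristic one uses \cite[Theorem~1.1.(B)]{anscombe-jahnke2018henselianity} to eliminate parameters and then passes from $K^{\mathcal{U}}$ to $K$ via elementarity $K \preceq K^{\mathcal{U}}$; this tool is not available in positive characteristic. Instead I would apply Fact~\ref{magic-def} directly to $(K, v_K)$. For this, I must verify that $Kv_K$ is neither separably closed nor real closed. Real closedness is ruled out by positive characteristic. As for separable closedness: since $K$ is perfect, so is $Kv_K$, hence if $Kv_K$ were separably closed it would be algebraically closed; but then $L \equiv Kv_K$ would be algebraically closed as well, making $v_L$ trivial and therefore defectless, contradicting hypothesis (2). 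Fact~\ref{magic-def} then supplies a non-trivial $\Lring$-definable henselian refinement of $v_K$ on $K$, which is the desired definable non-trivial henselian valuation.

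The delicate point — where I expect the main difficulty to lie — is that Fact~\ref{magic-def} as formulated asserts only the existence of a definable refinement, and the precise statement that the refinement is henselian comes from \cite[Proposition~2.4]{jahnke2019does}; the ultrapower argument above is meant to underwrite this, by producing on $K^{\mathcal{U}}$ an explicit parameter-definable henselian valuation whose existence witnesses the structural ingredients that Fact~\ref{magic-def} then packages into an honest $\Lring$-definable henselian valuation on $K$. I would expect that hypotheses (1) and (3) enter precisely in ruling out degenerate obstructions to this bridging step (the divisibility assumption in (1) prevents a Jahnke--Koenigsmann-style \emph{obstruction} from divisibility failures, and defectlessness in (3) ensures that the defect genuinely appears only after passing to the ultrapower).
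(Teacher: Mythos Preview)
Your argument has a genuine gap at the final step. Fact~\ref{magic-def} does \emph{not} produce a henselian refinement: it yields only an $\Lring$-definable valuation $w$ with $\O_w \subseteq \O_{v_K}$, and in the present situation $w$ will typically fail to be henselian. Indeed, since $Kv_K$ is not separably closed we have $H_2(K) = \emptyset$, so $v_K$ is the \emph{finest} henselian valuation on $K$; any proper refinement of $v_K$ is therefore non-henselian. Your appeal to \cite[Proposition~2.4]{jahnke2019does} does not rescue this --- that proposition likewise asserts only definability, not henselianity, and the paper explicitly flags ``(not necessarily henselian!)'' when invoking it. The vague claim that the ultrapower construction ``underwrites'' henselianity of the refinement is not an argument: the definable valuation you obtain on $K^{\mathcal U}$ uses parameters from $K^{\mathcal U} \setminus K$, and nothing you have written explains how to replace them by parameters from $K$.

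The paper's actual proof bridges this gap with a genuinely new construction that you are missing, and it is precisely here that hypotheses (1) and (3) do real work. One takes the non-henselian definable refinement $w$ (parameters $\vec{c} \in K$) and, on $K^{\mathcal U}$, forms for each candidate parameter tuple $\vec{b}$ the smallest common coarsening $\O_{\vec{b}} \coloneqq \O_{w^*} \cdot \varphi(K^{\mathcal U},\vec{b})$ of $w^*$ and the valuation ring cut out by $\varphi$. Each such $\O_{\vec{b}}$ is comparable with $\O_{v_K^{\mathcal U}}$; because $(K^{\mathcal U}, v_K^{\mathcal U})$ is tame with divisible value group (this is exactly where (1) and (3) enter, via Lemma~\ref{divisibility-tame}), no proper coarsening of $v_K^{\mathcal U}$ can be definable, forcing every $\O_{\vec{b}}$ to lie inside $\O_{v_K^{\mathcal U}}$. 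The union $\bigcup_{\vec{b}} \O_{\vec{b}}$ is then a non-trivial valuation ring, definable over $\vec{c} \in K$ alone, and it is henselian because it coarsens the henselian valuation $\varphi(K^{\mathcal U},\vec{\beta})$ obtained from Corollary~\ref{cor:define-val-from-indep-defect}. This valuation, having parameters in $K$, descends to $K$ by elementarity.
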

\begin{proof}
		Note that we may assume that $Kv_K$ is not separably closed; otherwise, Theorem \ref{jk} gives an \(\emptydefinition\)-definable non-trivial henselian valuation.
		\par By the Keisler-Shelah isomorphism theorem \cite[Theorem~2.5.36]{marker2006model}, take an index set $I$ and an ultrafilter $\mathcal U$ on $I$ such that ${(Kv_K)^{\mathcal U} \cong L^{\mathcal U}}$.
        Consider the ultrapowers ${(K^{\mathcal U},v_K^{\mathcal U}) \coloneqq (K,v_K)^{\mathcal U}}$ and ${(L^{\mathcal U},v_L^{\mathcal U}) \coloneqq (L,v_L)^{\mathcal U}}$ in \(\Lval\).
        Since $v_L^{\mathcal U}$ is a valuation with defect, the composition $v_1=v_L^{\mathcal U}\circ v_K^{\mathcal U}$ is a non-trivial henselian valuation on $K^{\mathcal U}$ with defect. 
        Thus, by Corollary~\ref{cor:define-val-from-indep-defect}, there is an $\Lring(K^{\mathcal U})$-definable non-trivial henselian valuation $v_1'$ coarsening $v_1$, say given by $\varphi(K^{\mathcal U},\vec{\beta})$ for some parameters $\vec{\beta} \in (K^{\mathcal U})^\ell$, $\ell \geq 1$.
        
        Since $Kv_K$ is not separably closed, Fact~\ref{magic-def} yields an $\Lring(K)$-definable valuation $w$ on $K$ (not necessarily henselian!) such that $\O_w \subseteq \O_{v_K}$. 
        Let $\psi(X,\vec{c})$ be the formula defining $w$.
        Denote by $w^*$ the valuation corresponding to $\psi(K^{\mathcal U},\vec{c})$. 
        Note that since $(K,v_K) \preceq (K^{\mathcal U}, v_K^{\mathcal U})$, then we still have that $w^*$ is a proper refinement of $v_K^{\mathcal U}$.
        
		As in the proof of \cite[Theorem~2.3.4]{engler2005valued}, given two valuation rings $\O_1$ and $\O_2$, we let
		\[
			\O_1 \cdot \O_2 \coloneqq \left\{\frac{x}{y} \mathrel{\Big|} x \in \O_1, \; y \in \O_1 \setminus \m_2 \right\}.
		\]
		Note that $\O_1 \cdot \O_2$ contains both $\O_1$ and $\O_2$ and is always a valuation ring. Indeed, it is the finest valuation ring containing both: if $\O_1, \O_2 \subseteq \O_3$, then for any $x \in \O_1$ and $y \in \O_1 \setminus \m_2$, one has that $y \notin \m_3$, so $\frac{1}{y} \in \O_3$ and thus $\frac{x}{y} \in \O_3$. We now consider the $\Lring(\vec{c})$-definable set 
		\[
			X = \left\{\vec{b} \in (K^{\mathcal U})^\ell \mid \varphi(K^{\mathcal U},\vec{b}) \; \text{is a valuation ring and} \; \O_{w^*}\cdot\varphi(K^{\mathcal U},\vec{b}) \neq K^{\mathcal{U}}\right\}.
		\]
		For each $\vec{b} \in X$, we consider the $\Lring(\vec{c},\vec{b})$-definable valuation ring
        \[
            \O_{\vec{b}} \coloneqq \O_{w^*} \cdot \varphi(K^{\mathcal U},\vec{b}),
        \]
        with corresponding valuation $v_{\vec{b}}$. As all coarsenings of a given valuation are comparable, any $v_{\vec{b}}$ for $\vec{b} \in X$ is comparable with $\O_{v_K^{\mathcal{U}}}$.
        \par Since $v_K^{\mathcal U}$ is a tame valuation with divisible value group, Lemma \ref{divisibility-tame} implies that $v_{\vec{b}}$ cannot be a proper coarsening. In particular, then, $\O_{v_{\vec{b}}} \subseteq \O_{v_K^{\mathcal U}}$. It follows that the union 
		\[
			\O \coloneqq \bigcup_{\vec{b} \in X} \O_{\vec{b}}
		\]
		is a non-trivial $\Lring(\vec{c})$-definable (note the parameters!) valuation ring on $K^{\mathcal U}$.
		\par To show that $\O$ is henselian, we will show that $\O$ contains $\varphi(K^{\mathcal{U}},\vec{\beta})$, i.e. that $\vec{\beta} \in X$. It is enough to show that ${\O_{\vec{\beta}} \neq K^{\mathcal U}}$. Since $\varphi(K^{\mathcal U},\vec{\beta})$ is comparable with $v_K^{\mathcal U}$, we have two possible cases. Either $\varphi(K^{\mathcal U},\vec{\beta})$ is coarser than $v_K^{\mathcal U}$, in which case $\O_{\vec{\beta}} = \varphi(K^{\mathcal{U}},\vec{\beta})$, or $\varphi(K^{\mathcal U},\vec{\beta})$ is finer than $v_K^{\mathcal U}$, in which case $\O_{\vec{\beta}}$ is also a refinement of $v_K^{\mathcal{U}}$. Either way, $\O_{\vec{\beta}}$ is non-trivial.
		\par We have just shown that $\O$ is a coarsening of $\varphi(K^{\mathcal{U}},\vec{\beta})$, thus it is henselian. In particular, the same formula defines a non-trivial henselian valuation on $K$, as needed.
\end{proof}

\section{Proof of the main theorem}
\label{section:characterization}

\begin{theorem*}
    Let \(K\) be perfect, not separably closed, and henselian. If ${\chara{K} = 0}$ and ${\chara{Kv_K} = p > 0}$, assume that $\O_{v_K}/p$ is semi-perfect. 
    Then \(K\) admits a definable non-trivial henselian valuation if and only if \textit{at least one} of the following conditions hold:
    \begin{enumerate}
    	\item \condsep,
    	\item \condthens,
    	\item \conddivelem,
    	\item \conddiv,
    	\item \conddef,
    	\item \conddefelem.
    \end{enumerate}
\end{theorem*}
\begin{proof}    
    If \(\chara{Kv_K} = 0\), the statement is exactly Theorem \ref{th:cor61}. 
    This is because condition $5$ is trivial (as equicharacteristic zero henselian valued fields are defectless), and Corollary~\ref{six-trivial} shows that condition $6$ is also trivial in this case.
    
    Assume \(\chara{K}v_K = p>0\) and note that \(Kv_K\) is perfect: in mixed characteristic, \(Kv_K\) is perfect since \(\O_{v_K}/p\) is semi-perfect; in positive characteristic, \(Kv_K\) is perfect since \(K\) is. Thus, one direction is Theorem~\ref{thm:meta}.
    \par In the other direction, we do a case distinction.
    First, assume $1 \vee 2 \vee 4$, then it follows from Theorem~\ref{jk} that \(K\) admits a definable non-trivial henselian valuation.
    
    If $3 \wedge \neg 4$ then Proposition~\ref{definability} yields a existance of definable non-trivial henselian valuation.

    Suppose now that \(5\) holds, i.e. $(K,v_K)$ is not defectless. 
    Then we get a non-trivial definable henselian valuation from Corollary~\ref{cor:define-val-from-indep-defect}.
    \par Finally, assume that $6$ holds, and since we have already established the case where $1 \vee 2 \vee 3 \vee 4 \vee 5$ holds, we can also assume $\neg 1 \land \neg 2 \land \neg 3 \land \neg 4 \land \neg 5$. 
    Then by Proposition~\ref{prop:defectless->definability-mixed} (in mixed characteristic) and Proposition~\ref{prop:defectless->definability-positive} (in positive characteristic), there is a definable non-trivial henselian valuation on \(K\).
\end{proof}

\section{Examples and questions}
We isolate a few interesting examples where our result yields the existence of non-trivial definable henselian valuations. We then discuss the difficulties in building ones satisfying certain properties. We refer to the six conditions in the Main Theorem as conditions $1$, $2$, $3$, $4$, $5$, and $6$, and to their negations as conditions $\neg 1$, $\neg 2$, $\neg 3$, $\neg 4$, $\neg 5$, and $\neg 6$.
\label{section:examples}

We start with the following observation.

\begin{remark}
	\label{rem:non-hens-sep-closed-canonical}
	Let \((K,v)\) be a henselian valued field with \(Kv\) non-henselian and not separably closed. Then \(v=v_K\) is the canonical henselian valuation on \(K\).
	Indeed, there can be no proper henselian refinements of \(v\), as they would correspond to non-trivial henselian valuations on \(Kv\) via \ref{lem:correspondence-ref-res}. Thus \(H_2(K)\) is empty and \(v\) is the finest valuation in \(H_1(K)\). 
\end{remark}

The aim of this section is to give some examples for fields where our Main Theorem yields a definable non-trivial henselian valuation because of conditions $5$ or $6$ (while at the same time \(\neg1\wedge\neg2\wedge\neg3\wedge\neg4\) holds). 
This boils down to finding a non-henselian t-henselian field that is not separably closed 
and such that we can control some properties of the canonical henselian valuation in some elementary extension 
(to access conditions $3$ and $6$).

The construction of non-henselian t-henselian fields goes back to Prestel and Ziegler,
see \cite[p. 338]{prestel-ziegler1978model}.
Other instances can be found 
in \cite[Proposition~6.7]{fehm-jahnke2015quantifier} (note that the field constructed there is elementary equivalent to a field admitting a non-trivial henselian valuation with divisible value group)
and 
in \cite[Examples~3.8 and 5.4]{jahnke-koenigsmann2017defining-coarsenings} (with an elementary extension \(L\) such that \(v_LL\) is not divisible).
All of these provide fields of characteristic \(0\).
In \cite[Proposition~4.13]{anscombe-jahnke2018henselianity}, there is a construction of a non-henselian t-henselian field of positive characteristic that is elementary equivalent to a field admitting a non-trivial tame valuation with divisible value group. Such a field will serve as the residue field of the canonical henselian valuation to construct examples that satisfy condition $5$, see Example \ref{five}. We will also adapt their construction in Lemma~\ref{building-block} and Proposition~\ref{construction-of-t-defect} to serve as the residue field of the canonical henselian valuation of an example that satisfies condition $6$.

The following is needed to check that the constructed examples satisfy condition \(\neg 3\).

\begin{proposition}
	\label{prop:t-hens:one-divisible-all-divisible}
	Let \(K\) be non-henselian and t-henselian such that there is \(K^*\equiv K\) admitting a non-trivial henselian valuation \(v^*\) with divisible value group. 
	Then for all \(L\equiv K\), \(v_LL\) is divisible.
\end{proposition}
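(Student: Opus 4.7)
The plan is to argue by contradiction: suppose that for some $L \equiv K$ the value group $v_L L$ is not divisible. If $L$ is either separably closed or non-henselian then $v_L$ is trivial by the paper's convention, so $v_L L = \{0\}$ is divisible---a contradiction. Hence $L$ is henselian and not separably closed, $v_L$ is non-trivial, and $K$ is also not separably closed (separable-closedness being $\Lring$-elementary). Fix a prime $p$ with $v_L L$ not $p$-divisible.

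First, dispose of the case in which $v_L L$ is also not $p$-antiregular. Then condition (3) of Theorem~\ref{jk} applies to $L$, producing an $\emptydefinition$-definable non-trivial henselian valuation on $L$, say defined by an $\Lring$-formula $\varphi(x)$. The recursive schema of $\Lring$-sentences asserting that ``$\varphi$ defines a non-trivial valuation ring satisfying Hensel's lemma in each fixed degree'' holds in $L$, and hence in $K$ by $L \equiv K$; so $\varphi(K)$ is itself a non-trivial henselian valuation ring on $K$, contradicting $K$ being non-henselian.

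The remaining case is that $v_L L$ is $p$-antiregular. Apply the Keisler--Shelah theorem, using $L \equiv K \equiv K^*$, to obtain an ultrafilter $\mathcal{U}$ on some index set $I$ with $L^{\mathcal{U}} \cong (K^*)^{\mathcal{U}}$; denote this common field by $M$. Along this identification, $M$ carries two non-trivial henselian valuations: $v_1 \coloneqq v_L^{\mathcal{U}}$, whose value group $v_1 M \equiv v_L L$ in $\Loag$ is still non-$p$-divisible and $p$-antiregular, and $v_2 \coloneqq (v^*)^{\mathcal{U}}$, whose value group $v_2 M$ is divisible. Since $K$ is not separably closed, neither is $M$; so by F.-K.\ Schmidt the valuations $v_1$ and $v_2$ are comparable.

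A short case analysis closes the proof. If $v_2$ refines $v_1$, then $v_1 M$ is a quotient of the divisible group $v_2 M$ by a convex subgroup, hence itself divisible, contradicting non-$p$-divisibility. Otherwise, $v_1$ strictly refines $v_2$, so $v_2 M \cong v_1 M / H$ for a convex subgroup $H$ which is nonzero (since $v_1 \neq v_2$) and proper (since $v_2$ is non-trivial); this exhibits $v_2 M$ as a non-trivial divisible---a fortiori $p$-divisible---quotient of $v_1 M$, in direct contradiction to the $p$-antiregularity of $v_1 M$. The main subtlety throughout is verifying that $p$-antiregularity is preserved under $\Loag$-elementary equivalence, which is needed to transport the hypothesis on $v_L L$ up to the ultrapower.
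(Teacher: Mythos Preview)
Your argument is essentially the paper's own proof, reorganized: both pass to a common ultrapower of $L$ and $K^*$, obtain two comparable non-trivial henselian valuations on it, and finish by a case split that feeds into Theorem~\ref{jk}. The paper splits on which of the two ultrapower valuations refines the other; you split earlier, on whether $v_LL$ is $p$-antiregular, and then run the same comparison. Your use of F.~K.~Schmidt for comparability is equivalent to the paper's observation that incomparability would force $Lv_L$ to be separably closed.

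On the subtlety you flag: you can avoid it entirely, and this is effectively what the paper does. In your Case~B, once $v_1$ strictly refines $v_2$ you have shown that $v_1M$ is not $p$-divisible and has the non-trivial $p$-divisible quotient $v_2M$, so $v_1M$ is \emph{not} $p$-antiregular. Now argue on $M$ rather than on $v_1M$ directly: either $v_1\in H_2(M)$, in which case $Mv_M$ is separably closed and Theorem~\ref{jk}(1) applies to $M$; or $v_1\in H_1(M)$, so $v_1$ (and hence $v_2$) coarsens $v_M$, and then $v_MM$ is not $p$-divisible (it has $v_1M$ as a quotient) and not $p$-antiregular (it has $v_2M$ as a non-trivial $p$-divisible quotient), so Theorem~\ref{jk}(3) applies to $M$. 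Either way one obtains an $\emptydefinition$-definable non-trivial henselian valuation on $M\equiv K$, the desired contradiction. This removes any need to know whether $p$-antiregularity is preserved under $\Loag$-elementary equivalence. (The paper's phrasing ``by Theorem~\ref{jk}(3), $L$ and thus also $K$\dots'' should be read in this spirit: one really applies Theorem~\ref{jk} to the ultrapower and transfers the $\emptydefinition$-definable valuation down.)
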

\begin{proof}
	We may assume that \(L\) is henselian and not separably closed. 
	Otherwise \(v_L\) is trivial and \(v_LL\) is the trivial group which is divisible.
	In particular, \(K\) is not separably closed.
	
	By the Keisler-Shelah isomorphism theorem \cite[Theorem~2.5.36]{marker2006model}, there are an index set \(I\) and an ultrafilter \(\mathcal U\) on \(I\) such that \(F\coloneqq (K^*)^{\mathcal U}\cong L^{\mathcal{U}}\). 
	Let \((K^*,v^*)^{\mathcal U}\eqqcolon (F,(v^*)^{\mathcal U})\) and \((L,v_L)^{\mathcal U}\eqqcolon (F,(v_L)^{\mathcal U})\).
	
	We claim that \((v^*)^{\mathcal U}\) and \((v_L)^{\mathcal U}\) are comparable. 
	Suppose not: then, \(Lv_L\) is separably closed, and by Theorem~\ref{jk}(1) \(L\) admits an  \(\emptydefinition\)-definable non-trivial henselian valuation. The same formula then defines an \(\emptydefinition\)-definable non-trivial henselian valuation on \(K\), contradicting the fact that \(K\) is non-henselian.
	
	Now we have to treat two cases. First, if \((v_L)^{\mathcal U}\) is a coarsening of \((v^*)^{\mathcal U}\). Then \((v_L)^{\mathcal U}F\) is a quotient of \((v^*)^{\mathcal U}F\) modulo some convex subgroup. Since \(v^*K^*\) is divisible by assumption, so is \((v^*)^{\mathcal U}F=(v^*K^*)^{\mathcal U}\), which then implies that also \((v_L)^{\mathcal{U}}F=(v_LL)^{\mathcal U}\) and \(v_LL\) are divisible. 
	
	Second, if \((v_L)^{\mathcal U}\) is a proper refinement of \((v^*)^{\mathcal U}\), then \((v^*)^{\mathcal U}F\) is a non-trivial quotient of \((v_L)^{\mathcal U}F\). 
	We assume for a contradiction that \((v_L)^{\mathcal U}F\) is not \(p\)-divisible for some prime \(p\). 
	Now, \((v_L)^{\mathcal U}F\) is also not \(p\)-antiregular, since \((v^*)^{\mathcal U}F\) is a non-trivial quotient, which is divisible.
	Thus, by Theorem~\ref{jk}(3), \(L\) and thus also \(K\) admit \(\emptydefinition\)-definable non-trivial henselian valuations, but \(K\) is non-henselian, a contradiction.
	Hence \((v_L)^{\mathcal U}F=(v_LL)^{\mathcal U}\) is divisible and so is \(v_LL\).
\end{proof}

\begin{remark}
	In the following, we will construct examples where $K$ has positive characteristic. It would be interesting to also produce examples in mixed characteristic. This essentially would amount to finding a way to construct a valued field $(K,v)$ of mixed characteristic, such that $vK$ is divisible, $\O_{v}/p$ is semi-perfect, and $Kv$ is some prescribed residue field. We are not aware of such a construction.
\end{remark}

\subsection{Puiseux series} We use the Puiseux series construction to build an example of a field satisfying conditions $\neg 1 \land \neg 2 \land \neg 3\land \neg 4 \land 5$ of the Main Theorem.

\begin{definition}[{\cite[Definition 4.4]{anscombe-jahnke2018henselianity}}]
	A field $K$ is of \emph{divisible-tame type} if there exist $L \equiv K$ and a non-trivial valuation $w$ on $L$ such that $(L,w)$ is tame with $wL$ divisible.
\end{definition}

\begin{remark}
	In particular, every divisible-tame type field is perfect.
\end{remark}

\begin{fact}[{\cite[Proposition~4.13]{anscombe-jahnke2018henselianity}}]\label{t-hensel-not-hensel}
	Let $p$ be a prime or zero. There exists a non-henselian t-henselian field of characteristic $p$ which is
	\begin{enumerate}
		\item not separably closed, and
		\item of divisible-tame type.
	\end{enumerate}
\end{fact}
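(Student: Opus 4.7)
The plan is to construct $K$ inside a large saturated extension of a concrete tame henselian valued field, following the Prestel--Ziegler strategy adapted to positive residue characteristic.

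First I would exhibit a henselian valued field $(F, v)$ with the required data: $\chara F = p$, $vF$ divisible, and $Fv$ perfect and not separably closed. A natural choice in characteristic $p > 0$ is the Hahn series $F = \mathbb{F}_p((t^{\mathbb{Q}}))$, and in characteristic zero $F = \mathbb{Q}((t^{\mathbb{Q}}))$ (or any suitable henselization thereof). The value group $\mathbb{Q}$ is divisible, the residue field is perfect and not separably closed, and $F$ is maximal, hence defectless; therefore $(F, v)$ is tame and not separably closed, and $F$ is itself a witness to being of divisible-tame type.

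Next I would pass to a sufficiently saturated $\Lval$-elementary extension $(F^*, v^*) \succeq (F, v)$ and build, by a transfinite induction of length $\omega_1$, an $\Lring$-elementary substructure $K \preceq F^*$ admitting no non-trivial henselian valuation. At each successor stage, I fix a candidate henselian valuation ring on the intermediate $K_\alpha$, locate in $F^*$ an element witnessing failure of Hensel's lemma for that candidate, and adjoin it (together with a small $\Lring$-elementary skeleton) to produce $K_{\alpha+1} \preceq F^*$. Setting $K \coloneqq \bigcup_\alpha K_\alpha$, I obtain a pure field with $K \preceq F^*$ in $\Lring$, so in particular $K \equiv F$, and by construction any hypothetical henselian valuation on $K$ would have already been killed at some stage of the induction.

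The three desired properties follow cheaply from $K \equiv F$: $K$ is not separably closed and is t-henselian (because $F$ carries a non-trivial henselian valuation), and $K$ is of divisible-tame type, witnessed by $F$ itself, which is $\Lring$-elementarily equivalent to $K$ and tame with divisible value group.

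The main obstacle is making the back-and-forth genuinely succeed in killing every potential henselian valuation on $K$ without destroying the $\Lring$-elementarity. This is delicate in positive characteristic, where classical Prestel--Ziegler arguments rely on AKE-type transfer principles that fail for arbitrary henselian valued fields. Restricting to a tame ambient $(F^*, v^*)$ allows invocation of the Ax--Kochen--Ershov theorem for tame valued fields, while the fact that on a non-separably-closed field all henselian valuations are comparable to $v_K$ (see Subsection~\ref{subs:canonical}) sharply limits the diversity of candidate valuations that must be defeated at each step.
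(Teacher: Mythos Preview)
Your approach has a fatal obstruction that you do not address: any $\Lring$-elementary substructure of a henselian valued field inherits a non-trivial henselian valuation, so your construction can never produce a non-henselian $K$. Concretely, if $K \preceq F^*$ in $\Lring$, then $K$ is relatively algebraically closed in $F^*$, and hence $(K, v^*\vert_K)$ is henselian: given $f \in \O_{v^*\vert_K}[X]$ and $a \in \O_{v^*\vert_K}$ with $v^*(f(a)) > 0$ and $v^*(f'(a)) = 0$, henselianity of $(F^*,v^*)$ produces $b \in \O_{v^*}$ with $f(b)=0$, and relative algebraic closedness forces $b \in K$. For your suggested $F = \mathbb{F}_p(\!(t^{\mathbb{Q}})\!)$ the residue field of any $\Lval$-elementary extension $(F^*,v^*)$ is still $\mathbb{F}_p$, so $v^*\vert_K$ is non-trivial on every infinite subfield. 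Thus each intermediate $K_\alpha$ already carries the non-trivial henselian valuation $v^*\vert_{K_\alpha}$, and this particular valuation can never be ``killed'' by adjoining elements of $F^*$: its extension $v^*\vert_{K_{\alpha+1}}$ is again henselian. The transfinite bookkeeping, the comparability of henselian valuations, and the tame AKE principle you invoke are all beside the point.

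The paper records this statement as a Fact from \cite{anscombe-jahnke2018henselianity}, but the construction is sketched (in a variant form) in Lemma~\ref{building-block} and Proposition~\ref{construction-of-t-defect}, and it runs in the opposite direction. One builds a tower $(K_{n+1},\overline{v_n})$ in which each step has residue field $K_n$, value group $\mathbb{Q}$, is $(k_n)_{\leq}$-henselian for a strictly increasing sequence $k_n$, but deliberately fails $q_n$-henselianity for some large prime $q_n$. Taking the inverse limit of the valuation rings gives a field $K$ that is not henselian (each $v_n$ fails $q_n$-henselianity), while a non-principal ultraproduct of the $(K,v_n)$ is genuinely henselian, tame, with divisible value group, witnessing t-henselianity of divisible-tame type. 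The essential idea you are missing is that one arranges \emph{approximate} henselianity ($n_{\leq}$-henselian for every $n$, recovered in the ultraproduct) without ever having an actual henselian valuation at any finite level; this cannot be achieved inside an ambient henselian field.
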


We will use the following folklore fact, whose proof we sketch out as we could not find a reference in the literature.

\begin{lemma}
	\label{lem:Puiseux-defect}
	Let \(K_0\) be a field of characteristic \(p>0\). Consider the Puiseux series over $K_0$,
	\[
	K \coloneqq \bigcup_{n \geq 0} K_0(\!(t^{\frac{1}{n}})\!)
	\]
	together with the restriction $v_t$ of the $t$-adic valuation from $K_0(\!(\mathbb{Q})\!)$.
	Then, $(K,v_t)$ is a henselian valued field of positive characteristic, $v_t$ is not defectless, and if $K_0$ is perfect, then so is $K$.
\end{lemma}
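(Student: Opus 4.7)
The plan is to verify the three claims separately, with non-defectlessness being the main point.

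\emph{Henselianity and positive characteristic.} Each $K_0(\!(t^{1/n})\!)$ is a complete, hence henselian, rank-one valued field under the restriction of $v_t$, which up to rescaling is its $t^{1/n}$-adic valuation. The chain $\{K_0(\!(t^{1/n})\!) : n \geq 1\}$ is directed by divisibility of indices, with valuation-preserving inclusions. A directed union of henselian valued fields along valuation-preserving embeddings is henselian---any polynomial witnessing Hensel's lemma lies, with its coefficients, in some layer where a root already exists---so $(K, v_t)$ is henselian; and $\chara(K) = \chara(K_0) > 0$.

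\emph{Perfectness.} Assume $K_0$ is perfect of characteristic $p$. Given $f = \sum_i a_i (t^{1/n})^i \in K_0(\!(t^{1/n})\!) \subseteq K$, the element $g := \sum_i a_i^{1/p}(t^{1/(pn)})^i \in K_0(\!(t^{1/(pn)})\!) \subseteq K$ satisfies $g^p = f$ by additivity of the Frobenius in characteristic $p$.

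\emph{Non-defectlessness.} The strategy is to exhibit an immediate Artin--Schreier extension of $(K, v_t)$ of degree $p$, which is automatically a defect extension. Consider $X^p - X - t^{-1} \in K[X]$. First I would argue it has no root in $K$: any purported root $\eta \in K_0(\!(t^{1/N})\!)$ must, by a leading-term valuation analysis, satisfy $v_t(\eta) = -1/p$; the identity $(\eta - t^{-1/p})^p = \eta^p - t^{-1}$, valid in characteristic $p$, shows that $\eta_1 := \eta - t^{-1/p}$ satisfies $\eta_1^p - \eta_1 = t^{-1/p}$. Iterating forces a term $t^{-1/p^k}$ for every $k \geq 1$ in the Puiseux expansion of $\eta$, contradicting $\eta \in K_0(\!(t^{1/N})\!)$. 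Being Artin--Schreier with no root, the polynomial is irreducible, and $L := K(\theta)$ is a degree $p$ extension. To show $L/K$ is immediate, I would consider the partial sums $\theta_N := \sum_{n=1}^N t^{-1/p^n} \in K$; a direct computation gives $\theta_N^p - \theta_N - t^{-1} = -t^{-1/p^N}$, whose valuation $-1/p^N$ is strictly increasing. Hence $(\theta_N)$ is a pseudo-Cauchy sequence of algebraic type in $K$, with $\theta$ a pseudo-limit in $L$ (one checks $v_t(\theta - \theta_N) = -1/p^{N+1}$ by applying $X^p - X$ to $\theta - \theta_N$) and no pseudo-limit in $K$ (by the argument ruling out a root). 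Kaplansky's theorem on immediate extensions generated by algebraic pseudo-limits then yields that $L/K$ is immediate; being an immediate extension of prime degree $p$, it has defect $p$.

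The main obstacle is the immediacy of $L/K$: the pseudo-Cauchy approach via Kaplansky's theorem is the cleanest route, as a direct verification that the residue field does not grow would require careful analysis of arbitrary elements $\sum_{i=0}^{p-1} a_i \theta^i$ of $L$ and of how their leading terms compare. The arithmetic of Puiseux expansions, combined with divisibility of $v_tK = \mathbb{Q}$ to rule out growth of the value group, is the technical backbone of the argument.
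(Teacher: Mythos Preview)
Your proof is correct, and the henselianity and perfectness steps match the paper's. The difference lies in the non-defectlessness argument: you establish immediacy of $K(\theta)/K$ via Kaplansky's theory of pseudo-Cauchy sequences, whereas the paper bypasses this entirely by observing that the explicit root $a = \sum_{n \geq 0} t^{-1/p^n}$ lies in the Hahn field $K_0(\!(\mathbb{Q})\!)$, and that $K \subseteq K_0(\!(\mathbb{Q})\!)$ is already an immediate extension (same residue field $K_0$, same value group $\mathbb{Q}$); hence the intermediate extension $K \subsetneq K(a)$ is automatically immediate. This ambient-embedding trick is shorter and avoids checking that $(\theta_N)$ has no pseudo-limit in $K$---a point you handle only sketchily (``by the argument ruling out a root''), though it can be made rigorous by noting that a pseudo-limit $c \in K_0(\!(t^{1/M})\!)$ would force $-1/p^{N+1} \in \tfrac{1}{\mathrm{lcm}(M,p^N)}\mathbb{Z}$ for all large $N$. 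Your route, on the other hand, is more self-contained in that it does not require introducing the larger Hahn field, and the pseudo-Cauchy framework would generalise more readily to settings where no convenient ambient immediate extension is available.
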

\begin{proof}
	As $(K,v_t)$ is the increasing union of henselian fields, $v_t$ is henselian. However, it is not algebraically maximal. Indeed, the equation $X^p-X-\frac{1}{t} = 0$ has no solution in $K$, however it admits a solution in $K_0(\!(\mathbb Q)\!)$, namely $a = \sum_{n \geq 0} t^{-\frac{1}{p^n}}$. Then, $K \subsetneq K(a) \subseteq K_0(\!(\mathbb{Q})\!)$ is a tower of immediate extensions, thus $K \subsetneq K(a)$ is a proper algebraic immediate extension. Note that if $K_0$ is perfect, then $K_0(\!(t^{\frac{1}{n}})\!)^{\frac{1}{p}} = K_0(\!(t^{\frac{1}{np}})\!)$ for every $n$, and thus the union $K$ is perfect.
\end{proof}

\begin{example}\label{five}
	Let $K_0$ be a non-henselian t-henselian field of characteristic $p>0$ which is not separably closed, and such that there exist $L \equiv K_0$ and a non-trivial henselian valuation $w$ on $L$ such that $wL$ is divisible (such a $K_0$ exists, for example, by Fact \ref{t-hensel-not-hensel}). Then,
	\[
	K \coloneqq \bigcup_{n \geq 0} K_0(\!(t^{\frac{1}{n}})\!)
	\]
	satisfies $\neg 1 \land \neg 2 \land \neg 3\land \neg 4 \land 5$ from our Main Theorem. 
	Indeed, we have \(v_K=v_t\) by Remark \ref{rem:non-hens-sep-closed-canonical}.
	Now conditions \(\neg 1\), \(\neg 2\) and \(\neg 4\) are immediate from the construction, condition \(\neg 3\) follows from Proposition \ref{prop:t-hens:one-divisible-all-divisible} 
	and condition \(5\) holds because of Lemma \ref{lem:Puiseux-defect}. 
	It then follows that \(K\) admits a definable non-trivial henselian valuation.
\end{example}

\subsection{Condition $6$ is necessary} We adapt a construction from \cite{anscombe-jahnke2018henselianity} to produce non-henselian, t-henselian fields with defect in some elementary extension. We first introduce a series of weakenings of henselianity.

\begin{definition}[{\cite[Definition 3.3]{anscombe-jahnke2018henselianity}}]
	Let $n \geq 1$. Say that a valued field $(K,v)$ is \emph{$n_{\leq}$-henselian} if for every $f \in \O_v[X]$ of degree $\leq n$, and $a \in \O_v$, if $v(f(a)) > 0$ and $v(f'(a)) = 0$, then there is $b \in \O_v$ with $f(b) = 0$ and $v(b-a) > 0$.
\end{definition}

\begin{definition}[{\cite[Section 4.2]{engler2005valued}}]
	Let $q$ be a prime. Say that a valued field $(K,v)$ is \emph{$q$-henselian} if $v$ extends uniquely to every Galois extension of $K$ of $q$-power degree.
\end{definition}

\begin{remark}
	A valued field $(K,v)$ is henselian if and only if it is $n_\leq$-henselian for all $n$. Being $n_\leq$-henselian is clearly a first-order property of $(K,v)$; by \cite[Propositions 1.2 and 1.3]{koenigsmann1995p}, the same is true for $q$-henselianity.
\end{remark}

\par Next, we isolate a new notion built along the lines of t-henselianity and divisible-tame type.
\begin{definition}
	We call a field $K$ \emph{t-henselian of defect type} if there is an elementarily equivalent $L \equiv K$ which admits some henselian valuation $v$ such that $(L,v)$ has defect. We call a field $K$ \emph{t-henselian of divisible-defect type} if there is an elementarily equivalent $L \equiv K$ which admits some henselian valuation $v$ such that $(L,v)$ has defect with $vL$ divisible.
\end{definition}

We now replicate a construction that first appeared in \cite{prestel-ziegler1978model}, and was later refined in \cite{fehm-jahnke2015quantifier} and \cite{anscombe-jahnke2018henselianity}. The proofs are almost verbatim the same as in \cite{fehm-jahnke2015quantifier} and \cite{anscombe-jahnke2018henselianity}; we thus give the appropriate references and explain the differences.
\begin{lemma}[{\cite[Lemma~4.8]{anscombe-jahnke2018henselianity}}]\label{building-block}
	Let $p > 0$ be a prime. Let $K$ be a perfect field of characteristic $p$ that contains all roots of unity. Let $n > p$ and let $q$ be a prime with $q > n$. Then, there exists an equicharacteristic valued field $(K',v)$ with
	\begin{itemize}
		\item $K'v = K$, $vK' = \mathbb Q$,
		\item $K'$ is perfect,
		\item $(K',v)$ is not $q$-henselian, but it is $n_{\leq}$-henselian,
		\item $(K',v)$ admits a proper degree $p$ immediate extension.
	\end{itemize}
\end{lemma}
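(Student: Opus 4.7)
I would follow the Prestel--Ziegler style construction of \cite[Proposition~6.7]{fehm-jahnke2015quantifier} and \cite[Lemma~4.8]{anscombe-jahnke2018henselianity}, modifying the ambient field so that a specific Artin--Schreier defect extension is automatically preserved throughout the construction.

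Take $\Omega \coloneqq \bigcup_{m \geq 1} K(\!(t^{1/m})\!)$, the Puiseux series field, which is henselian (as a union of henselian fields), perfect (since $K$ is), of residue field $K$ and value group $\mathbb{Q}$. Inside $K(\!(\mathbb{Q})\!)$, the element $\theta \coloneqq \sum_{i \geq 1} t^{-1/p^i}$ is a root of $f_{\mathrm{AS}}(X) \coloneqq X^p - X - t^{-1}$; its support $\{-1/p^i \mid i \geq 1\}$ is not contained in $\frac{1}{m}\mathbb{Z}$ for any finite $m$, so $\theta \notin \Omega$, $f_{\mathrm{AS}}$ is irreducible over $\Omega$, and $\Omega(\theta)/\Omega$ is a proper immediate extension of degree $p$. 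Start from $F_0 \coloneqq \bigcup_{m \geq 1} K(t^{1/m}) \subseteq \Omega$, a perfect non-henselian field with the same residue field and value group as $\Omega$, and build inductively a chain of perfect subfields $F_0 \subseteq F_1 \subseteq \cdots \subseteq \Omega$: at each stage adjoin inside $\Omega$ the unique Hensel root of every pair $(f,a)$ with $f \in \O_{F_i}[X]$ of degree at most $n$, $a \in \O_{F_i}$, $v(f(a)) > 0$, and $v(f'(a)) = 0$, and close under $p$-th roots (which stay inside $\Omega$ since $\Omega$ is perfect). A standard diagonal bookkeeping ensures that the limit $K' \coloneqq \bigcup_i F_i$ keeps two distinct extensions of $v$ to the splitting field of some fixed degree-$q$ Galois polynomial, so $K'$ is $n_\leq$-henselian but not $q$-henselian, perfect, of residue field $K$ and value group $\mathbb{Q}$.

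For the final property, observe that $K' \subseteq \Omega$ and $\theta \notin \Omega$, so $\theta \notin K'$; hence $f_{\mathrm{AS}}$ is irreducible over $K'$, and equipping $K'(\theta)$ with the valuation induced from $K(\!(\mathbb{Q})\!)$ yields a proper degree $p$ immediate extension of $(K',v)$.

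The main obstacle is the combinatorial bookkeeping of the Prestel--Ziegler diagonalization, which must simultaneously produce all Hensel roots required for $n_\leq$-henselianity while preserving the non-$q$-Hensel obstruction and the perfectness of each $F_i$; this is handled exactly as in the cited references, using $q > n$ to ensure that the Hensel adjunctions (which live in factors of degree $\leq n$) do not interfere with the chosen degree-$q$ obstruction. The only genuinely new input is the choice of ambient field $\Omega$: small enough to exclude $\theta$, yet henselian enough that every Hensel root required during the construction can be located inside $\Omega$, so that the Artin--Schreier defect extension survives automatically to the limit.
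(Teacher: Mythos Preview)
Your proposal shares the paper's key new idea---replacing the Hahn field $K(\!(\mathbb{Q})\!)$ of \cite[Lemma~4.8]{anscombe-jahnke2018henselianity} by the Puiseux series $\Omega$, so that the Artin--Schreier root $\theta$ of $X^p-X-t^{-1}$ lies outside the ambient field and the immediate degree-$p$ extension survives automatically. Your argument for the last bullet point is correct and is exactly what the paper does.

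However, your construction of $K'$ itself is \emph{not} the one in the paper (nor the one in \cite{fehm-jahnke2015quantifier} or \cite{anscombe-jahnke2018henselianity}), and the step where you claim non-$q$-henselianity has a genuine gap. You describe $K'$ as the closure of $F_0$ inside $\Omega$ under \emph{all} Hensel lifts of degree $\leq n$ and $p$-th roots; there is no diagonalisation or avoidance happening in what you wrote. The assertion that ``$q>n$ ensures the Hensel adjunctions do not interfere with the degree-$q$ obstruction'' is not justified: each individual adjunction has degree $\leq n$, but their compositum can have arbitrary degree, and there is no a~priori reason why the closure should fail to be $q$-henselian. The references you invoke do not carry out such an iterative construction---both \cite[Proposition~6.7]{fehm-jahnke2015quantifier} and \cite[Lemma~4.8]{anscombe-jahnke2018henselianity} instead use a Galois-theoretic argument: one finds a closed subgroup $G\cong\mathbb{Z}_q$ inside $\Gal{F_0^{\mathrm{sep}}}{F_0}$, sets $E=\operatorname{Fix}(G)$, and takes $K'=E\cap\widetilde{F}$ where $\widetilde{F}=F_0^{\mathrm{alg}}\cap\Omega$ is the henselization. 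Non-$q$-henselianity then comes from the structure of $\Gal{F_0^{\mathrm{sep}}}{E}\cong\mathbb{Z}_q$, and $n_{\leq}$-henselianity from the fact that $E$ has no separable extensions of degree $\leq n<q$. The paper follows this route verbatim, only changing the ambient field.

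So: keep your ambient field $\Omega$, your $F_0$, and your Artin--Schreier argument, but replace the iterative Hensel-closure by the fixed-field construction $K'=\operatorname{Fix}(G)\cap(F_0^{\mathrm{alg}}\cap\Omega)$ and cite the three claims of \cite[Lemma~4.8]{anscombe-jahnke2018henselianity} for the verification of the third bullet.
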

\begin{proof}
	We follow the proof of \cite[Lemma~4.8]{anscombe-jahnke2018henselianity}, but we substitute the generalized power series with the Puiseux series.
	Indeed, inside the Puiseux series $L \coloneqq \bigcup_{n \geq 0} K(\!(t^{\frac{1}{n}})\!)$, endowed with the restriction $v_t$ of the $t$-adic valuation on $K(\!(\mathbb Q)\!)$, consider the subfield $F \coloneqq K(t^{\nu} \mid \nu \in \mathbb Q)$. Consider $\widetilde{F} \coloneqq F^{\mathrm{alg}} \cap L$. Note that since $L$ is henselian, so is $\widetilde{F}$. Then,  arguing as in \cite[Proof of Lemma~4.8]{anscombe-jahnke2018henselianity}, there is a subgroup $G \leq \Gal{F^{\mathrm{sep}}}{F}$ with $G \cong \mathbb Z_q$.
	\par Let $E = \operatorname{Fix}(G) \subseteq F^{\mathrm{sep}}$, and consider $K' \coloneqq E \cap \widetilde{F}$. Then, $(K',v_t)$ has residue field $K$ and value group $\mathbb Q$, and it is perfect. Moreover, the roots of $X^p-X-\frac{1}{t}$ give rise to immediate extensions of degree $p$ over $(K',v_t)$. Now, arguing as in \cite[Claim~4.8.1]{anscombe-jahnke2018henselianity}, $(K',v_t)$ is not $q$-henselian. Similarly, arguing as in \cite[Claim~4.8.2]{anscombe-jahnke2018henselianity}, $(K',v_t)$ is $n_{\leq}$-henselian (note that they argue that $(K',v_t)$ is $(n!^2!)_{\leq}$-henselian, using that in their case $q > n!^2!$).
	\par As for the last point, note that $L$ admits an immediate extension of degree $p$, namely given by any root of the Artin-Schreier polynomial $X^p-X-\frac{1}{t}$. In particular, then, $\widetilde{F}$ also admits an immediate extension of degree $p$, and thus so does $K'$. 
\end{proof}

The following construction follows very closely the proof of {\cite[Proposition~4.13]{anscombe-jahnke2018henselianity}}, using Lemma~\ref{building-block} in place of \cite[Lemma~4.8]{anscombe-jahnke2018henselianity} and diverging only in the last paragraph. We sketch out the construction for the convenience of the reader, but invite them to see \cite{anscombe-jahnke2018henselianity} and \cite{fehm-jahnke2015quantifier} for the full details.

\begin{proposition}[{\cite[Proposition~4.13]{anscombe-jahnke2018henselianity}}]\label{construction-of-t-defect}
	Let $p > 0$ be a prime. There is a non-henselian, t-henselian of divisible-defect type perfect field of characteristic $p$ which is not separably closed.
\end{proposition}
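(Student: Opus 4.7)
The plan is to mirror the proof of \cite[Proposition~4.13]{anscombe-jahnke2018henselianity} almost verbatim, substituting our Lemma~\ref{building-block} in place of their Lemma~4.8, diverging only in the last paragraph where the divisible-defect type property is verified. Starting from a perfect, non-separably-closed base field $K_0$ of characteristic $p$ containing all roots of unity (for instance, the perfect hull $\overline{\mathbb F_p}(t)^{\mathrm{perf}}$), I enumerate the primes $q_1<q_2<\cdots$ with $q_1>p$, and for each $n\geq 1$ I apply Lemma~\ref{building-block} to $K_{n-1}$ with parameters $n$ and $q_n>n$ to produce a perfect valued field $(K_n,v_n)$ satisfying $K_nv_n=K_{n-1}$, $v_nK_n=\mathbb Q$, $(K_n,v_n)$ is $n_\leq$-henselian but not $q_n$-henselian, and $(K_n,v_n)$ admits a proper immediate degree $p$ extension witnessed by a fixed Artin-Schreier polynomial of the form $X^p-X-1/t_n$.

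As in \cite{anscombe-jahnke2018henselianity,fehm-jahnke2015quantifier}, the $K_n$'s are threaded through iterated Puiseux constructions so that we may form a direct limit $K=\bigcup_n K_n$ equipped with the composed valuation $w$, whose value group is a lex sum of copies of $\mathbb Q$ (hence divisible as an abelian group) and whose residue field is $K_0$. The standard Prestel-Ziegler verification then shows that $K$ is perfect of characteristic $p$, not separably closed (inherited from $K_0$), $n_\leq$-henselian for every $n$ (the corresponding first-order sentences already hold in $(K_m,v_m)$ for all $m\geq n$ and propagate to the union), and not $q_n$-henselian for each $n$ (the explicit witnesses produced by Lemma~\ref{building-block} persist in $(K,w)$); in particular $K$ is t-henselian but not henselian.

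The divergence from \cite[Proposition~4.13]{anscombe-jahnke2018henselianity} lies in the last paragraph, where I verify that $K$ is of divisible-defect type rather than divisible-tame type. Take an $\aleph_1$-saturated elementary extension $(L,w^*)$ of $(K,w)$ in $\Lval$: a standard compactness argument from $n_\leq$-henselianity of $(K,w)$ for all $n$ shows that $(L,w^*)$ is henselian, so $L\equiv K$ is a henselian field, and divisibility of $w^*L$ is inherited from $wK$ by first-order expressibility. Finally, the Artin-Schreier polynomial produced by Lemma~\ref{building-block} witnesses, via a first-order $\Lval$-statement, that $(K,w)$ admits a proper immediate degree $p$ extension, so this property transfers to $(L,w^*)$, which is therefore not defectless. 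The main obstacle will be the bookkeeping in the Puiseux construction: one must check carefully that the composed valuation $w$ on the direct limit has the claimed divisible value group, that the failure of $q_n$-henselianity at each finite stage survives in the limit, and that the Artin-Schreier witness of defect persists through both the limit and the subsequent passage to a saturated elementary extension. Since the iteration itself and the $n_\leq$-henselianity arguments are entirely parallel to those in \cite{anscombe-jahnke2018henselianity,fehm-jahnke2015quantifier,prestel-ziegler1978model}, we would defer to those references for the routine details, highlighting only the Artin-Schreier witness of defect as the substantive novelty required for the divisible-defect type conclusion.
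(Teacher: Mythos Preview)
Your proposal contains a structural error in the construction of $K$. You write that the $K_n$'s form an increasing chain so that $K = \bigcup_n K_n$ is a direct limit, but this is incompatible with the output of Lemma~\ref{building-block}: by construction $K_n v_n = K_{n-1}$, so $K_{n-1}$ is a \emph{residue field} of $K_n$, not a subfield, and there is no natural inclusion $K_{n-1} \hookrightarrow K_n$. The correct construction, as in \cite{anscombe-jahnke2018henselianity,fehm-jahnke2015quantifier}, takes a \emph{projective} limit of the valuation rings $\O_{n,0}$ along the residue maps, yielding a field $K$ that carries not a single composed valuation $w$ but an increasing chain of valuation rings $\O_{v_0} \subseteq \O_{v_1} \subseteq \cdots$, where each $v_n$ has residue field $K_n$ and is $(k_n)_\leq$-henselian but not $q_n$-henselian.

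This error propagates to your endgame. You claim that a single valued field $(K,w)$ is $n_\leq$-henselian for every $n$ and then pass to a saturated $\Lval$-elementary extension to obtain a henselian $(L,w^*)$. But being $n_\leq$-henselian for every $n$ is precisely being henselian, so your $(K,w)$ would already be henselian, contradicting the non-henselianity you need. The passage to a henselian model is instead via an ultraproduct of the \emph{family} $(K,v_n)_{n}$: the resulting $(K^*,v^*)$ is henselian because $(K,v_n)$ is $(k_n)_\leq$-henselian with $k_n \to \infty$, while $K^* \equiv K$ holds only in $\Lring$, not in $\Lval$ for any fixed $v_n$. To establish defect one must then show that each $(K,v_n)$ individually admits a degree~$p$ immediate extension, which requires lifting the Artin--Schreier witness from $(K_{n+1},\overline{v_n})$ up to $(K,v_n)$ through the henselian coarsening $v_{n+1}$; this lifting is the actual content of the paper's last paragraph and is not a one-line first-order transfer along an $\Lval$-elementary extension.
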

\begin{proof}
	For each $n \geq 0$, set $k_n = n+p+1$ and choose a prime $q_n > k_n$. Let $K_0 = \mathbb F_p^{\mathrm{alg}}$. Then, using Lemma~\ref{building-block}, build a sequence $\{(K_{n+1},\induced{v_n}) \mid n \geq 0\}$ of valued fields such that each $(K_{n+1},\induced{v_{n}})$ has value group $\mathbb Q$, residue field $K_{n}$, is not $q_n$-henselian, but it is ${(k_n)}_{\leq}$-henselian, and it admits a proper degree $p$ immediate extension.
	
	For any $n > m \geq 0$, then, write $v_{n,m} \coloneqq \induced{v_m} \circ \cdots \circ \induced{v_{n-1}}$ and denote by $\O_{n,m}$ the corresponding valuation ring on $K_n$. Denote by $\pi_{n,m} \colon \O_{n,0} \to \O_{m,0}$ the restriction of the residue map $\O_{n,m} \to K_m$.
	Then, the valuation rings $(\O_{n,0})_{n \geq 0}$ form a projective system together with the maps $\pi_{n,m}$.
	The limit $\O$ is again a valuation ring, together with natural projections $\pi_{\infty,n} \colon \O \to \O_{n,0}$. For each $n \geq 0$, consider the localization $\O_{v_n} \coloneqq \O_{\ker(\pi_{\infty,n})}$.
	Each $\O_{v_n}$ is a valuation ring on $K = \operatorname{Frac}(\O) = \bigcup_{n \geq 0} \O_{v_n}$ with residue field $K_n$ and non-trivial divisible value group. 
	Indeed, for each $n \geq 0$, $\O_{v_n} \subseteq \O_{v_{n+1}}$, and $v_{n}$ induces precisely $\induced{v_n}$ on $K_{n+1} = Kv_{n+1}$. 
	\[\begin{tikzcd}
		K && {Kv_{n+1} = K_{n+1}} && {K_n = Kv_n}
		\arrow["{v_{n+1}}"', from=1-1, to=1-3]
		\arrow["{v_n}", curve={height=-18pt}, from=1-1, to=1-5]
		\arrow["{\induced{v_n}}"', from=1-3, to=1-5]
	\end{tikzcd}\]
	We now let $(K^*,v^*)$ be an ultraproduct of the family $(K,v_n)_{n \geq 0}$. Then, $K^*$ is perfect, and $v^*$ is henselian with divisible value group. Moreover, $K$ is not henselian and thus not separably closed.
	
	Now, we diverge from the proof of {\cite[Proposition~4.13]{anscombe-jahnke2018henselianity}} and  argue that $(K^*,v^*)$ admits a degree $p$ immediate extension, in particular a defect extension. It is enough to show that each $(K,v_n)$ admits one such. We know that $(K_{n+1},\induced{v_n})$ admits a degree $p$ immediate extension generated by a root $\alpha$ of an Artin-Schreier polynomial $X^p-X-\xi$, for some $\xi \in K_{n+1}$. Denote by $\induced{u}$ the prolongation of $\induced{v_n}$ to $K_{n+1}(\alpha)$ that makes $(K_{n+1}(\alpha),\induced{u})$ immediate over $(K_{n+1},\induced{v_n})$. For some $z \in K$ with $\res_{v_{n+1}}(z) = \xi$, we consider the Artin-Schreier polynomial $X^p-X-z$. Let $w$ be a prolongation of $v_{n+1}$ to $K^{\mathrm{alg}}$: then there is $a \in K^{\mathrm{alg}}$ with $a^p-a-z = 0$ and $\res_{w}(a) = \alpha$, by henselianity of $w$. Then, if we denote by $w$ again the restriction of $w$ to $K(a)$, we have that $K(a)w = Kw(\alpha) = K_{n+1}(\alpha)$. We now let $u = \induced{u} \circ w$. Then, $K(a)u = K_{n+1}(\alpha)\induced{u} = Kv_n$. Moreover, $v_nK$ is divisible, so $uK(a) = v_nK$.
\end{proof}

\begin{proposition}\label{canonical-has-defect}
	Let $K$ be a non-henselian, t-henselian of defect type, perfect field of characteristic $p$ which is not separably closed. Let $L \equiv K$ be such that, for some henselian valuation $v$, $(L,v)$ has defect. Then $v_L$ has defect.
\end{proposition}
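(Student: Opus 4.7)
The plan is to exploit the non-henselianity of $K$ (transferred elementarily to $L$ in a very limited way) in order to pin down the position of $v$ relative to $v_L$, and then to conclude by the multiplicativity of defect under composition.

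First, since $(L,v)$ has defect, the valuation $v$ must be non-trivial and henselian, so $L$ is henselian. Moreover, $K$ is not separably closed and $L \equiv K$, so $L$ is not separably closed either, and thus $v_L$ is non-trivial. The key intermediate step is to show that $H_2(L) = \emptyset$, i.e., that $Lv_L$ is not separably closed. Suppose towards a contradiction that $Lv_L$ were separably closed; then Theorem~\ref{jk}(1) would produce an $\emptydefinition$-definable non-trivial henselian valuation on $L$. By elementary equivalence with $K$, the same $\Lring$-formula would define a non-trivial henselian valuation on $K$, contradicting the non-henselianity of $K$. Thus $H_2(L) = \emptyset$, so every henselian valuation on $L$ is a coarsening of $v_L$; in particular $v_L$ is a refinement of $v$, and we may write $v_L = u \circ v$ for some valuation $u$ on $Lv$ (cf.~Subsection~\ref{subsec:coarsening-decomposition}).

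To conclude, pick a finite extension $L' / L$ with $d_v(L'/L) > 1$ witnessing that $(L,v)$ is not defectless. Applying the fundamental equality~(\ref{defect-eqn}) to $v$ on $L'/L$, to the extension of $u$ on $Lv \subseteq L'v$, and to $v_L = u \circ v$ on $L'/L$, together with the short exact sequence $0 \to u(L'v) \to v_L L' \to vL' \to 0$ (and its counterpart over $L$) relating value groups across the composition, a direct computation yields $d_{v_L} = d_v \cdot d_u$. This is the multiplicativity of defect under composition (in the spirit of \cite[Lemma~2.9]{anscombe2024characterizing}); since $d_v > 1$, also $d_{v_L} > 1$, so $v_L$ has defect as required.

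The main obstacle is the first step, namely showing $v_L \in H_1(L)$: this is where the non-henselianity of $K$ is used essentially, through Theorem~\ref{jk}(1) and elementary equivalence, to rule out the ``separably closed residue'' scenario that would allow $v$ to live strictly below $v_L$. Once $v$ is known to be a coarsening of $v_L$, the remaining multiplicativity step is a standard consequence of the fundamental equality.
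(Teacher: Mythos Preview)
Your proof is correct, but it takes a different route from the paper's. The paper does \emph{not} show that $H_2(L)=\emptyset$; instead it treats both possible positions of $v$ relative to $v_L$ directly. If $v$ coarsens $v_L$, the paper concludes exactly as you do, via multiplicativity of defect. If $v$ properly refines $v_L$, the paper observes that then $Lv_L$ is separably closed, and since $L$ is perfect of characteristic $p$ the residue field $Lv_L$ is in fact algebraically closed; hence the induced valuation $\bar v$ on $Lv_L$ is defectless, and from $v=\bar v\circ v_L$ together with multiplicativity one gets $d_{v_L}=d_v>1$.

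The trade-off is this: your argument uses the non-henselianity of $K$ (through Theorem~\ref{jk}(1) and elementary equivalence) to eliminate the second case altogether, so you only ever need the ``defect passes to refinements'' direction of multiplicativity. The paper's argument never invokes the non-henselianity hypothesis at all, and thereby actually proves a slightly stronger statement: for any perfect field $L$ of characteristic $p$ with a henselian defect valuation $v$, the canonical valuation $v_L$ has defect. Your approach is perfectly valid for the proposition as stated; the paper's approach buys a bit more generality at the cost of handling one extra case.
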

\begin{proof}
	We distinguish two cases. If $v$ is a (possibly non-proper) coarsening of $v_L$, then we are done, since defect goes up in coarsenings. If $v$ is a proper refinement of $v_L$, then $Lv_L$ is separably closed, and since $L$ is perfect, it is in particular algebraically closed. Thus, the valuation $\barv$ induced by $v$ on $Lv_L$ is defectless. But then, since $v = \barv \circ v_L$ has defect if and only at least one of $v_L$ and $\barv$ has defect (\cite[Lemma~2.9]{anscombe2024characterizing}), $v_L$ must have defect.
\end{proof}

\begin{remark}
	The construction in Proposition~\ref{construction-of-t-defect} shows that in equicharacteristic $p$, one cannot eliminate the parameters from Corollary~\ref{cor:define-val-from-indep-defect}. Indeed, assume that, on any perfect henselian valued field $(F,v)$ of characteristic $p>0$ such that $v$ is not defectless, there is a non-trivial $\emptydefinition$-definable henselian coarsening $w$ of $v$. Take a non-henselian, t-henselian of defect type, perfect field $K$ (given by Proposition~\ref{construction-of-t-defect}), and let $L \equiv K$ be such that $(L,v_L)$ has defect (which we can assume exists by Proposition~\ref{canonical-has-defect}). Then, there is a coarsening $w$ of $v_L$ which is $\emptydefinition$-definable. In particular, using the same formula, we can find a non-trivial henselian valuation on $K$, a contradiction.
\end{remark}

\begin{example}\label{example-of-six}
	Let $p>0$ be a prime. Let $K_0$ be a non-henselian, t-henselian of divisible-defect type, perfect field of characteristic $p$ which is not separably closed. Then,
	\[
	K \coloneqq K_0(\!(\mathbb Q)\!)
	\]
	satisfies $\neg 1 \land \neg 2 \land \neg 3 \land \neg 4 \land \neg 5 \land 6$ from our Main Theorem. 
	Indeed, we have \(v_K=v_t\) by Remark~\ref{rem:non-hens-sep-closed-canonical}.
	Now conditions \(\neg 1\), \(\neg 2\), \(\neg 4\), \(6\) are immediate from the construction, \(\neg 5\) follows since $(K_0(\!(\mathbb Q)\!),v_t)$ is tame, and condition \(\neg 3\) follows from Proposition~\ref{prop:t-hens:one-divisible-all-divisible}. It then follows that \(K\) admits a definable non-trivial henselian valuation.
\end{example}

\begin{question}
	Let $K$ be a non-henselian, t-henselian of divisible-tame type field of characteristic $p$ which is not separably closed. 
	Suppose that $L \equiv K$ admits a non-trivial henselian valuation. 
	Must $(L,v_L)$ be defectless? 
	This is the defectless version of Proposition~\ref{prop:t-hens:one-divisible-all-divisible}, 
	and it would yield that if $K_0$ is non-henselian, t-henselian of divisible-tame type, not separably closed, 
	then $K_0(\!(\mathbb Q)\!)$ is an example for $\neg 1 \land \neg 2 \land \neg 3 \land \neg 4 \land \neg 5 \land \neg 6$ (and thus admits no non-trivial definable henselian valuation).
\end{question}

\subsection*{Acknowledgements}
This work was done as part of the PhD projects of the authors: we therefore wish to thank Sylvy Anscombe, Arno Fehm, Martin Hils, and Franziska Jahnke for their \textit{perfect} guidance, highly \textit{valued} encouragement, and dedication in finding and overcoming the \textit{defects} in our arguments. We would like to express special thanks to S. Anscombe for pointing us towards Beth definability in time of need, to F. Jahnke for directing us towards and through these questions, and suggesting the use of independent defect, and to A. Fehm for pointing out how the proof of Proposition \ref{definability} could be simplified and applied in a more general setting. We also wish to thank Blaise Boissonneau, Nicolas Daans, Akash Hossain, and Franz-Viktor Kuhlmann for fruitful discussions.
We thank the anonymous reviewer for the useful comments, that in particular led to the addition of Section 6.

This project began when the third author was a Visiting Doctoral Researcher to the \textit{Cluster of Excellence ``Mathematics Münster''}. Our thanks go to ``Mathematics Münster'' for providing a lively research environment where this project could start.

This research is supported by the Deutsche Forschungsgemeinschaft (DFG, German Research Foundation) under the Excellence Strategy EXC 2044–390685587, \textit{Mathematics Münster: Dynamics–Geometry–Structure}. The first two authors were also supported by the German Research Foundation (DFG) via HI 2004/1-1 (part of the French-German ANR-DFG project GeoMod). The second author was also funded by the DFG project 495759320 -- \textit{Modelltheorie bewerteter Körper mit Endomorphismus}. The third author was also funded by the DFG project 404427454 -- \textit{Definierbarkeit und Entscheidbarkeit in globalen und lokalen Körpern}. 

\bibliographystyle{alpha}

\end{document}